\UseAllTwocells \xyoption{frame} \CompileMatrices
\newtheorem{prop}{Proposition}[section]
\theoremstyle{plain}
        \newtheorem{theorem}{Theorem}[section]
        \newtheorem{lemma}[theorem]{Lemma}
        \newtheorem{remark}[theorem]{Remark}
        \newtheorem{proposition}[theorem]{Proposition}
\theoremstyle{definition}
        \newtheorem{definition}[theorem]{Definition}
\theoremstyle{remark}
\theoremstyle{remark}
\numberwithin{equation}{section}
\newcommand{\Mbar}{\overline{\M}}
\newcommand{\X}{\mathcal{X}}
\newcommand{\Y}{\mathcal{Y}}
\newcommand{\M}{\mathcal{M}}
\newcommand{\C}{\mathcal{C}}
\newcommand{\B}{\mathcal{B}}
\newcommand{\sL}{\mathcal{L}}
\def\<{\left\langle}
\def\>{\right\rangle}
\newcommand{\complex}{{\mathbb C}}
\newcommand{\reals}{{\mathbb R}}
\newcommand{\integers}{{\mathbb Z}}
\newcommand{\calg}{{\mathcal G}}
\newcommand{\call}{{\mathcal L}}
\newcommand{\Ad}{\operatorname{Ad}}
\newcommand{\tr}{\operatorname{tr}}
\newcommand{\LL}{\vec{\mathfrak{l} } }
\title{A quantum Leray-Hirsch theorem for banded gerbes}
\author{Xiang Tang}
\address{(X. T.) Department of Mathematics, Washington University, St. Louis, MO 63130, USA}
\email{xtang@math.wustl.edu}
\author{Hsian-Hua Tseng}
\address{(H.-H. T.) Department of Mathematics, Ohio State University, Columbus, OH 43210, USA}
\email{hhtseng@math.ohio-state.edu}
\begin{document}
\date{\today}
\keywords{}
\begin{abstract}
For a gerbe $\Y$ over a smooth proper Deligne-Mumford stack $\B$ banded by a finite group $G$, we prove a structure result on the Gromov-Witten theory of $\Y$, expressing Gromov-Witten invariants of $\Y$ in terms of Gromov-Witten invariants of $\B$ twisted by various flat $U(1)$-gerbes on $\B$. This can be viewed as a Leray-Hirsch type of result for Gromov-Witten theory of gerbes.
\end{abstract}

\maketitle 
\tableofcontents

\section{Introduction}
\subsection{Gerbes}
Gerbes arise naturally in the study of stacks. Every stack with non-trivial generic stabilizers is a gerbe over another stack, see e.g. \cite{bn}, \cite{acv}.

Inspired by the Physics paper \cite{hel-hen-pan-sh}, we studied thoroughly in \cite{ta-ts} the noncommutative geometry of \'etale gerbes. The purpose of this paper is to study an aspect of the quantum geometry of \'etale gerbes. Our main result is a complete determination of Gromov-Witten theory of a large class of \'etale gerbes called {\em banded} gerbes.

Let $\B$ be a smooth proper Deligne-Mumford stack. Let $G$ be a finite group. A gerbe $\Y\to \B$ is banded by $G$ if the associated $Out(G)$-torsor over $\B$ admits a section. It is well-known, see e.g. \cite{gi}, \cite{lurie}, \cite{br}, that the isomorphism class of such a gerbe of $\Y\to \B$ is classified by the cohomology group $H^2(\B, Z(G))$, where $Z(G)$ is the center of $G$. See Section \ref{subsec:structure-gerbe} for a review. In this paper, such a $\Y$ is called a {\em banded} $G$-gerbe over $\B$. 

\subsection{Gromov-Witten theory}
The Gromov-Witten invariants of $\Y$ are intersection numbers on moduli spaces of stable maps to $\Y$, see \cite{agv1, agv2}, \cite{cr2}, \cite{tseng} for introductions. Let $\beta\in H_2(\Y, \mathbb{Q})$ be a curve class. Let $$\Mbar_{g,n}(\Y, \beta)$$ be the moduli space of $n$-pointed, genus $g$, degree $\beta$ stable maps to $\Y$. The evaluation maps $$ev_i: \Mbar_{g,n}(\Y, \beta)\to I\Y$$ take values in the inertia orbifold $I\Y$ of $\Y$ with $I\Y:=\Y\times_{\Y\times \Y} \Y$. Let $\psi_i\in H^2(\Mbar_{g,n}(\Y,\beta), \mathbb{Q}), i=1,...,n$ be descendant classes. For $\phi_1,...,\phi_n\in H^\bullet(I\Y)$ and $a_1,...,a_n\in \mathbb{Z}_{\geq 0}$, one can associate the Gromov-Witten invariant 
$$\<\prod_{i=1}^n \phi_i\psi_i^{a_i}\>_{g,n,\beta}^\Y:=\int_{[\Mbar_{g,n}(\Y,\beta)]^{vir}}\prod_{i=1}^n ev_i^*\phi_i \psi_i^{a_i},$$
where $[\Mbar_{g,n}(\Y, \beta)]^{vir}$ is the {\em weighted} virtual fundamental class used in \cite{agv1} and \cite{tseng}.

Let $\widehat{G}$ be the set of isomorphism classes of irreducible unitary representations of $G$. For each $\rho\in \widehat{G}$, a locally constant $U(1)$-gerbe $c_\rho$ on $\B$ is constructed in \cite{ta-ts}. The class of $c_\rho$ in $H^2(\B, U(1))$ is determined by the class of $\Y\to \B$ via the map $H^2(\B, Z(G))\to H^2(\B, U(1))$ where $Z(G)\to U(1)$ is given by the restriction of $\rho$ to $Z(G)$.

By \cite{pry}, \cite{lu}, the holonomy of the gerbe $c_\rho$ defines a line bundle $\sL_{c_\rho}\to I\B$. By \cite{pry}, there is a canonical trivialization $$\theta: \otimes_{i=1}^nev_i^*\sL_{c_\rho}\to \underline{\mathbb{C}}$$ of the line bundle on $\Mbar_{g,n}(\B, \beta)$ for a given $\beta\in H_2(\B, \mathbb{Q})$.

Let $H^\bullet(I\B, \sL_{c_\rho})$ be the cohomology of $I\B$ with coefficients in $\sL_{c_\rho}$. For $\varphi_1,...,\varphi_n\in H^\bullet(I\B, \sL_{c_\rho})$ and $a_1,..., a_n\in \mathbb{Z}_{\geq 0}$, one can associate the $c_\rho$-twisted Gromov-Witten invariant 
$$\<\prod_{i=1}^n \varphi_i\psi_i^{a_i}\>_{g,n,\beta}^{\B, c_\rho}:=\int_{[\Mbar_{g,n}(\B,\beta)]^{vir}}\theta_*\left( \prod_{i=1}^nev_i^*\varphi_i\right) \prod_{i=1}^n \psi_i^{a_i}.$$

\subsection{Results}
In \cite{ta-ts}, an additive isomorphism 
\begin{equation}
I: H^\bullet(I\Y)\to \oplus_{\rho\in \widehat{G}}H^\bullet(I\B, \sL_{c_\rho})
\end{equation}
was obtained. For $\delta\in H^\bullet(I\Y)$, $I(\delta)$ decomposes accordingly:
$$I(\delta)=\sum_{\rho\in \widehat{G}} I(\delta)_\rho, \quad I(\delta)_\rho\in H^\bullet(I\B, \sL_{c_\rho}).$$
The following is the main result of this paper.

\begin{theorem}\label{thm:main}
Let $g\geq 0$, $n>0$, and $a_1,...,a_n\geq 0$ be integers. Let $\beta\in H_2(\B, \mathbb{Q})=H_2(\Y, \mathbb{Q})$ be a curve class. Let $\delta_1,..., \delta_n\in H^\bullet_{CR}(\Y):=H^\bullet(I\Y)$. Then   the following equality of GW invariants holds:
\begin{equation}\label{eqn:decomp_GW_inv}
\<\prod_{j=1}^n \delta_j\psi_j^{a_j}\>_{g,n,\beta}^\Y=\sum_{\rho\in \widehat{G}}\left(\frac{\text{dim}\, V_\rho}{|G|}\right)^{2-2g}\<\prod_{j=1}^n I(\delta_j)_\rho\psi_j^{a_j}\>_{g,n,\beta}^{\B, c_\rho}.
\end{equation}

\end{theorem}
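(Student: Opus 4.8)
The plan is to compute the left-hand integral by pushing forward along the gerbe projection $p\colon\Y\to\B$ and extracting a representation-theoretic weight for each sector. First I would set up the correspondence between twisted stable maps to $\Y$ and to $\B$: composing a twisted stable map $\tf\colon\C\to\Y$ with $p$ and adapting the band of the source to $\B$ produces a stable map $f\colon\C'\to\B$ of the same degree, where I use $H_2(\Y,\rational)\cong H_2(\B,\rational)$ since the $BG$-fibers are rationally acyclic. This yields a morphism $\pi\colon\Mbar_{g,n}(\Y,\beta)\to\Mbar_{g,n}(\B,\beta)$ whose fiber over $f$ is the finite groupoid of liftings of $f$ through the gerbe, i.e.\ the sections of the pulled-back $G$-gerbe $f^*\Y\to\C'$, equivalently twisted $G$-bundles on the source. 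Because $p$ is étale of relative dimension zero one has $T_\Y\cong p^*T_\B$, so the deformation--obstruction theory of a map is insensitive to the chosen lift; hence the perfect obstruction theory of $\Mbar_{g,n}(\Y,\beta)$ is pulled back along $\pi$ from that of $\Mbar_{g,n}(\B,\beta)$, and the integral over $[\Mbar_{g,n}(\Y,\beta)]^{vir}$ reduces to a fiberwise (groupoid) count followed by integration against $[\Mbar_{g,n}(\B,\beta)]^{vir}$.

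Next I would organize the integrand by sectors. The moduli $\Mbar_{g,n}(\Y,\beta)$ breaks into open-and-closed pieces according to the monodromy/conjugacy-class data of $\tf$ at the marked points, and the evaluation maps $ev_i$ land in the corresponding components of $I\Y$. Composing with the additive isomorphism $I$ of \cite{ta-ts} splits $\otimes_i ev_i^*\delta_i$ along $H^\bullet(I\Y)\cong\oplus_{\rho}H^\bullet(I\B,\sL_{c_\rho})$, so that the pushforward $\pi_*\big(\prod_i ev_i^*\delta_i\big)$ distributes over $\rho\in\widehat{G}$. Here the holonomy line bundle $\sL_{c_\rho}\to I\B$ and its canonical trivialization $\theta$ of $\otimes_i ev_i^*\sL_{c_\rho}$ from \cite{pry} are precisely what convert each $\Y$-insertion $ev_i^*\delta_i$ into the $c_\rho$-twisted insertion $\theta_*\!\big(ev_i^*I(\delta_j)_\rho\big)$ on $\Mbar_{g,n}(\B,\beta)$. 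The descendant classes $\psi_j$ on $\Mbar_{g,n}(\Y,\beta)$ depend only on the coarse source and its marked points, hence are pulled back through $\pi$, so the powers $\psi_j^{a_j}$ match verbatim on the two sides.

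The decisive ingredient is then the fiber mass: the weighted count of gerbe-liftings of a fixed $f$ of genus $g$, with monodromies fixed at the marked points by the chosen sectors. This is the groupoid cardinality of twisted $G$-bundles on the genus $g$ source, and by the Frobenius--Mednykh character formula it is a sum over $\widehat{G}$ in which $\rho$ contributes the genus-dependent factor $(\dim V_\rho)^{2-2g}/|G|^{2-2g}$, with the marked-point monodromy characters $\chi_\rho$ being exactly the data already absorbed into $I(\delta_j)_\rho$ and into the twisting class $c_\rho$. Matching contributions sector by sector produces the coefficient $\left(\frac{\dim V_\rho}{|G|}\right)^{2-2g}$ in front of $\<\prod_j I(\delta_j)_\rho\psi_j^{a_j}\>_{g,n,\beta}^{\B,c_\rho}$, which is the asserted identity.

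I expect the main obstacle to be this last step: computing the fiberwise gerbe-lift count rigorously and showing it equals the Mednykh factor $\left(\frac{\dim V_\rho}{|G|}\right)^{2-2g}$ as a locally constant function on $\Mbar_{g,n}(\B,\beta)$, independent of $f$, so that it genuinely factors out as a scalar. This requires controlling the behavior at the boundary, where nodal twisted curves force compatibility of the monodromy/representation labels across components, and confirming that the holonomy bundle $\sL_{c_\rho}$ and the trivialization $\theta$ glue correctly at nodes and marked points so that the fiberwise character data reassembles into an honest $c_\rho$-twisted invariant. Establishing this compatibility of the counting with the virtual pushforward along the boundary strata is where the real work lies.
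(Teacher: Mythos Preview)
Your outline is essentially the paper's proof of the \emph{abelian} case (Theorem~\ref{thm:main-abelian}): push forward along $\pi$, invoke a virtual pushforward property (Lemma~\ref{lem:pushforward} and Appendix~\ref{app:vir_push}), compute the degree as a count of gerbe liftings via a Mednykh-type formula (Section~\ref{sec:degree}), and match the resulting character sums against the isomorphism $I$ and the trivialization $\theta_{c_\rho}$. For abelian $G$ this works because each $\rho$ is one-dimensional, so $\rho([x_i,y_i])=1$ and the equation $[x_1,y_1]\cdots[x_g,y_g]=c\,g_1\cdots g_n$ collapses to $\prod_j\chi_\rho(g_j)=\rho(c^{-1})$, which is exactly the trivializing constant of $\otimes_j ev_j^*\sL_{c_\rho}$ (Lemmas~\ref{lem:hol} and~\ref{lem:degree}).

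The gap is in your last paragraph, for non-abelian $G$. The degree formula (Theorem~\ref{thm:char-formula}) does hold in general, but the term-by-term identification you want---``the marked-point monodromy characters $\chi_\rho$ being exactly the data already absorbed into $I(\delta_j)_\rho$ and into the twisting class $c_\rho$''---does not go through. When $\dim V_\rho>1$ the characters no longer kill commutators, so $\prod_j\chi_\rho(g_j)$ is not the holonomy of $c_\rho$; moreover $c_\rho$ depends only on $\rho|_{Z(G)}$, so distinct $\rho$ share the same $c_\rho$ and the bookkeeping separating them cannot come from $\theta_{c_\rho}$ alone. The paper avoids this entirely: it factors $\Y\to\Y'\to\B$ with $\Y\to\Y'$ a banded $Z(G)$-gerbe (abelian, so your argument applies) and $\Y'\simeq\B\times BK$ a trivial $K$-gerbe, then solves the twisted Gromov--Witten theory of the product separately (Theorem~\ref{thm:product}). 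The non-abelian representation theory---in particular the passage from $\widehat{Z(G)}$ and $\widehat{K}_{\lambda\circ\nu}$ to $\widehat{G}$ via Lemma~\ref{lem:twisting-rep}---is handled at the level of cohomology and of $BK$-invariants, not inside the lifting count. If you want to salvage the direct route you would need a non-abelian analogue of Lemma~\ref{lem:hol} relating $\alpha_c\prod_j\chi_\alpha(g_j^\bullet)$ simultaneously to $I$ and to $\theta$, and the paper gives no indication that such an identity exists without the factorization.
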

The following comments are in order.

\begin{enumerate}

\item
Theorem \ref{thm:main} confirms the decomposition conjecture/gerbe duality stated in \cite[Conjecture 1.8]{ta-ts}. The decomposition conjecture was studied previously for certain classes of gerbes in \cite{ajt1, ajt2, ajt2.5}, \cite{johnson}. Theorem \ref{thm:main} recovers and extends all of these works. 

\item
A $G$-gerbe $\Y\to \B$ may be interpreted as a fiber bundle over $\B$ with fibers $BG$. In this point of view, Theorem \ref{thm:main} can be understood as a ``quantum Leray-Hirsch'' result, as it determines the Gromov-Witten theory of $\Y$ in terms of Gromov-Witten theory of $\B$ and information about the fiber $BG$. 

\item
The virtual pushforward formula, Lemma \ref{lem:pushforward} below, implies that the Gromov-Witten theory of $\B$ is determined by the Gromov-Witten theory of $\Y$. If $\B$ is a scheme, then \cite[Corollary 6.2]{pry} shows that $c_\rho$-twisted Gromov-Witten invariants of $\B$ coincide with Gromov-Witten invariants of $\B$ up to certain holonomy factors. So in this case Theorem \ref{thm:main} implies that Gromov-Witten theories of $\Y$ and $\B$ are equivalent. 

\end{enumerate}

\subsection{Idea of proof}
The main technical aspect of the proof of Theorem \ref{thm:main} concerns properties of the map $$\pi: \Mbar_{g,\beta}(\Y, d)\to \Mbar_{g,n}(\B,\beta)$$ obtained by composing a stable map to $\Y$ with $\Y\to \B$. Choosing orbifold structures at marked points selects components of $\Mbar_{g,n}(\Y, \beta)$ and $\Mbar_{g,n}(\B, \beta)$. In Section \ref{sec:degree}, the degree of $\pi$ over each component are explicitly evaluated by a detailed analysis of liftings of stable maps from target $\B$ to target $\Y$. The pushforward $\pi_*[\Mbar_{g,n}(\Y, \beta)]^{vir}$ is computed using the information about degrees. These form the main ingredients of the proof of Theorem \ref{thm:main} for abelian groups $G$. This is explained in Theorem \ref{thm:main-abelian}. 

Theorem \ref{thm:main} for general $G$ is obtained in two stages. The short exact sequence $$1\to Z(G)\to G\to G/Z(G)=: K\to 1$$ implies that the gerbe $\Y\to \B$ can be factored as $$\Y\to \Y'\to \B,$$ where $\Y\to \Y'$ is a banded $Z(G)$-gerbe and $\Y'\to \B$ is a banded $K$-gerbe. Theorem \ref{thm:main-abelian} applies to $\Y\to \Y'$ and expresses Gromov-Witten invariants of $\Y$ in terms of Gromov-Witten invariants of $\Y'$ with a certain twist $c'$. The $K$-gerbe $\Y'\to\B$ is necessarily trivial, i.e. $\Y'\simeq \B\times BK$. The twisted Gromov-Witten theory of such a product can be explicitly solved in terms of twisted Gromov-Witten theory of $\B$, see Theorem \ref{thm:product}. Theorem \ref{thm:main} is derived by combining the results for $\Y\to \Y'$ and $\Y'\to \B$. 

\subsection{Plan of the paper}
The proof of Theorem \ref{thm:main} is given in Section \ref{sec:decomp_thm}. The abelian case, Theorem \ref{thm:main-abelian}, is proven in Section \ref{sec:thm-abelian}. Theorem \ref{thm:product}, which concerns the twisted theory of a product, is proven in Section \ref{sec:pf_thm_prod}. Section \ref{sec:degree} is devoted to the detailed study of liftings of stable maps to a gerbe, and a proof of the required pushforward formula for virtual fundamental classes. Appendix \ref{app:vir_push} is concerned with virtual pushforward property. Appendix \ref{app:counting_result} derives a counting formula needed for the degree computation. Appendix \ref{app:twisted_bgamma} discusses gerbe-twisted Gromov-Witten theory of a classifying stack.

\subsection{Notations and conventions}

For a Deligne-Mumford $\mathbb{C}$-stack $\X$, its Chen-Ruan orbifold cohomology is defined as $$H_{CR}^\bullet(\X):=H^\bullet(I\X).$$ If $c$ is a flat $U(1)$-gerbe on $\X$, the holonomy of $c$ defines a line bundle $\sL_c$ on $I\X$. The $c$-twisted Chen-Ruan orbifold cohomology of $\X$ is defined as $$H_{CR}^\bullet(\X, c):=H^\bullet(I\X, \sL_c).$$

Let $\B$ be a proper DM stack over $\mathbb{C}$. Let $\pi: \Y\to \B$ be a banded $G$-gerbe. Let $(\widehat{\Y}, c)$ be the dual pair considered in \cite{ta-ts}. Since the $G$-gerbe $\Y\to \B$ is banded, the following description holds: 
\begin{equation}\label{eqn:decomp_dual}
\widehat{\Y}=\coprod_{\rho\in \widehat{G}}\widehat{\Y}_\rho, \quad \widehat{\Y}_\rho=\B.
\end{equation}

In this paper, the Gromov-Witten theory of a stack $\X$ is defined with insertions coming from the cohomology of the inertia orbifold $I\X$, as opposed to the rigidified inertia orbifold $\bar{I}\X$. This version of Gromov-Witten theory is described in \cite{agv1} and \cite{tseng}, and uses weighted virtual fundamental classes on moduli spaces of stable maps. 

In this paper, a smooth Deligne-Mumford $\mathbb{C}$-stack and its corresponding complex projective orbifold are treated as synonymous. Accordingly, all the orbifolds are equipped with canonical symplectic structures, coming from the K\"ahler structures, so that the results in \cite{ta-ts} can be applied.

\subsection{Acknowledgment}
We thank Professor Yongbin Ruan for his interests in this work and his continuous encouragement over the years. X. T. is supported in part by NSF grant DMS-1363250. H.-H. T. is supported in part by Simons Foundation Collaboration Grant and NSF grant DMS-1506551.   

\section{Decomposition of GW-invariants}\label{sec:decomp_thm}




\subsection{Structure of a banded $G$-gerbe}\label{subsec:structure-gerbe}

Following \cite{mo-pr-ind} and \cite{ta-ts}, we represent the orbifold $\B$ by a proper \'etale groupoid $\mathfrak{Q}\rightrightarrows M$ whose nerve spaces $\mathfrak{Q}^{(\bullet)}$ are disjoint unions of contractible open subsets of $\reals^n$.  A $G$-gerbe over $\B$ can be represented by an extension 
\[
G\times M\rightrightarrows M \stackrel{i}{\longrightarrow} \mathfrak{H}\rightrightarrows M \stackrel{j}{\longrightarrow} \mathfrak{Q}\rightrightarrows M,
\]
of the groupoid $\mathfrak{Q}\rightrightarrows M$ by the groupoid of a trivial bundle of groups $G\times M\rightrightarrows M$. 

Following \cite[Sec. 4.2]{ta-ts}, by a smooth section 
\[
\sigma: \mathfrak{Q}\longrightarrow \mathfrak{H},
\]
such that $j\circ \sigma=id$, we can write the groupoid $\mathfrak{H}$ as $G\rtimes _{\sigma, \tau}\mathfrak{Q}$, where $\tau$ is a nonabelian $2$-cocycle on $\mathfrak{Q}$ with value in $G$. More concretely, $\tau$ satisfies
\[
\tau(q_1, q_2)\tau(q_1q_2, q_3)=\Ad_{\sigma(q_1)}\big(\tau(q_2, q_3)\big)\tau(q_1, q_2q_3),\ \forall (q_1, q_2, q_3)\in \mathfrak{Q}^{(3)}. 
\]
As every component of $\mathfrak{Q}$ is contractible, $\mathfrak{H}$ is diffeomorphic to $G\times \mathfrak{Q}$ as a topological space. We choose the obvious splitting map $\sigma: \mathfrak{Q}\to \mathfrak{H}=G\times \mathfrak{Q}$ by mapping $q$ to $(e, q)$, where $e$ is the identity of $G$.  

As $G$ is a normal subgroup(oid) of $\mathfrak{H}$, the conjugation of $(e,q)$ on $G$ gives an automorphism of $G$. Following the discussion and notations in \cite[Sec. 3 and 4]{ta-ts}, we can describe the groupoid structure on $\mathfrak{H}$ as 
\[
(g_1, q_1)(g_2, q_2)=(g_1\operatorname{Ad}_{\sigma(q_1)}(g_2)\tau(q_1, q_2), q_1q_2).
\]
Recall that 
\begin{equation}\label{eq:tau}
(e,q_1)(e, q_2)=(\tau(q_1, q_2), e)(e, q_1q_2),
\end{equation}
and 
\begin{equation}\label{eq:tau-cocycle}
\tau(q_1, q_2)\tau(q_1q_2, q_3)=\operatorname{Ad}_{(e, q_1)}(\tau(q_2, q_3))\tau(q_1, q_2q_3). 
\end{equation}

As the group $G$ is finite and the splitting  $\sigma$ is continuous, the map $\mathfrak{Q}\to \operatorname{Aut}(G)$ by $q\mapsto \operatorname{Ad}_{(e,q)}$ is locally constant. We have assumed that $\Y$ is a banded $G$-gerbe over $\X$. Therefore, there is a continuous (locally constant) map $\varphi: \mathfrak{Q}\to G$ such that 
\[
\operatorname{Ad}_{\varphi(q)}=\operatorname{Ad}_{(e,q)}. 
\]
Eq. (\ref{eq:tau}) implies
\[
\operatorname{Ad}_{\varphi(q_1)}\operatorname{Ad}_{\varphi(q_2)}=\operatorname{Ad}_{\tau(q_1, q_2)}\operatorname{Ad}_{\varphi(q_1q_2)}, 
\]
and $\varphi(q_2)^{-1}\varphi(q_1)^{-1}\tau(q_1, q_2)\varphi(q_1q_2)$ is in the center of $G$. 

Now, consider a new splitting $\sigma': \mathfrak{Q}\to \mathfrak{H}$ by $\sigma': q\mapsto (\varphi(q)^{-1}, q)$.  Then 
\begin{eqnarray*}
\sigma'(q_1)\sigma'(q_2)&=&(\varphi(q_1)^{-1}, q_1)(\varphi(q_2)^{-1}, q_2)=(\varphi(q_1)^{-1}\operatorname{Ad}_{(e, q_1)}(\varphi(q_2)^{-1})\tau(q_1, q_2), q_1q_2)\\ 
&=& (\varphi(q_2)^{-1}\varphi(q_1)^{-1}\tau(q_1,q_2), q_1q_2)\\
&=&\big(\varphi(q_2)^{-1}\varphi(q_1)^{-1}\tau(q_1, q_2)\varphi(q_1q_2), e\big)(\varphi(q_1q_2)^{-1}, q_1q_2)\\
&=&\tau'(q_1,q_2)\sigma'(q_1q_2),
\end{eqnarray*}
where $\tau'(q_1, q_2)=\varphi(q_2)^{-1}\varphi(q_1)^{-1}\tau(q_1, q_2)\varphi(q_1q_2)$ is in the center of $G$. Furthermore, $\operatorname{Ad}_{\sigma'(q)}$ now is
\[
\operatorname{Ad}_{\sigma'(q)}(g)=(\varphi(q)^{-1},q)(g,e)(\tau(q, q^{-1})^{-1}\varphi(q),q^{-1})=(g, e). 
\] 
Therefore, with the new splitting $\sigma'$, the conjugation $\operatorname{Ad}_{\sigma'(q)}$ is the identity automorphism with $\tau'$ taking value in the center of $G$. Furthermore, as $\operatorname{Ad}_{\sigma'(q)}$ is the identity automorphism, Eq. (\ref{eq:tau-cocycle}) implies that $\tau'$ is a $Z(G)$-valued 2-cocycle on $\mathfrak{Q}$.  As the map $\sigma'$ is locally constant, the cocycle $\tau'$ is also locally constant.  It is easy to observe that if $\tau'$ is a coboundary, then the $G$-gerbe is trivial. Therefore, banded $G$-gerbes over $\mathfrak{Q}$ are classified by $H^2(\mathfrak{Q}, Z(G))$. 

Let $Z(G)$ be the center of $G$, and $K$ be the quotient group $G/Z(G)$. We have the following short exact sequence of groups
\begin{equation}\label{eq:center-ext}
1\rightarrow Z(G)\rightarrow G\rightarrow K\rightarrow 1. 
\end{equation}
Therefore, $G$ is a central extension of $K$ by $Z(G)$. Such an extension is classified by the group cohomology $H^2(K, Z(G))$. Let $\nu\in Z^2(K, Z(G))$ be a $Z(G)$-valued 2-cocycle on $K$ determining the extension (\ref{eq:center-ext}). Using $\nu$, we can write an element of $G$  in term of $(z,k)$ such that 
\begin{equation}\label{eq:center-mult}
(z_1, k_1)(z_2, k_2)=(z_1z_2\nu(k_1, k_2), k_1k_2).
\end{equation}
Without loss of generality, we assume that $\nu$ is normalized, i.e. $\nu(1, k)=\nu(k,1)=1$. 


Let $\mathcal{B}$ be an orbifold presented by a proper \'etale groupoid $\mathfrak{Q}\rightrightarrows M$. And let $\Y$ be a banded $G$-gerbe over $\B$. Following the previous discussion, we can present $\Y$ by a proper \'etale groupoid $\mathfrak{H}$ satisfying
\[
M\times G\rightarrow \mathfrak{H}\rightrightarrows M \rightarrow \mathfrak{Q}\rightrightarrows M.
\]
And $\mathfrak{H}$ consists of pairs $(g, q)$ with $g\in G, q\in \mathfrak{Q}$, satisfying
\[
(g_1, q_1)(g_2, q_2)=(g_1g_2 \tau(q_1, q_2), q_1q_2),
\]
where $\tau$ is a $Z(G)$-valued 2-cocycle on $\mathfrak{Q}$. Without loss of generality, we assume that $\tau$ is normalized, i.e. $\tau(1, q)=\tau(q,1)=1$. 

Using the description of $G$ in (\ref{eq:center-ext}) and (\ref{eq:center-mult}), we write $\mathfrak{H}$ 
\[
\mathfrak{H}=G\times_\tau \mathfrak{Q}=(Z(G) \times_\nu K) \times_\tau \mathfrak{Q}=Z(G)\times_{\nu\boxtimes \tau} (K\times \mathfrak{Q}),
\]
where $\nu\boxtimes \tau$ is a $Z(G)$-valued 2-cocycle on $K\times \mathfrak{Q}$ defined as
\[
\nu\boxtimes \tau\big((k_1, q_1), (k_2, q_2)\big):=\nu(k_1, k_2)\tau(q_1, q_2). 
\]
Define a groupoid $\mathfrak{K}\rightrightarrows M$ to be $(K\times \mathfrak{Q})\rightrightarrows M$, and $\mu=\nu\boxtimes \tau$ a $Z(G)$-valued 2-cocycle on $\mathfrak{K}\rightrightarrows M$.  Then we have the following two exact sequences, i.e. 
\begin{eqnarray}
\label{eq:zg-ext} M\times Z(G)\rightarrow \mathfrak{H}=Z(G)\times_\mu \mathfrak{K}\rightarrow \mathfrak{K}\rightrightarrows M,\\
\label{eq:k-ext} M\times K\rightarrow \mathfrak{K}=K\times \mathfrak{Q}\rightarrow \mathfrak{Q}.
\end{eqnarray}
Eq. (\ref{eq:zg-ext}) shows that $\mathfrak{H}$ is a central extension of  $\mathfrak{K}\rightrightarrows M$ by $Z(G)$, determined by the $Z(G)$-valued 2-cocycle $\mu$; and Eq. (\ref{eq:k-ext}) shows that $\mathfrak{K}$ is the product of $\mathfrak{Q}\rightrightarrows M$ with the group $K$. 

\subsection{Discrete torsion}\label{subsec:torsion}
Let $Z:=Z(G)$ be the center of $G$. Following (\ref{eq:center-ext}) and (\ref{eq:center-mult}) of $G$, we write $G=Z\times_{\nu} K$, where $\nu\in Z^2(K, Z)$ is a $Z$-valued 2-cocycle on $K$. Let $\widehat{G}$ be the set of isomorphism classes of irreducible unitary $G$-representations, and $\widehat{Z}$ be the set of isomorphism classes of  irreducible unitary $Z$-representations. $\widehat{Z}$ can be identified with the set of simple characters on $Z$, i.e. group homomorphism $\rho: Z\to U(1)\subset \complex$.  Given $\rho\in \widehat{Z}$, $\rho\circ \nu$ is a $U(1)$-valued 2-cocycle on $K$. Let $\widehat{K}_{\rho\circ\nu}$ be the set of irreducible $\rho\circ \nu$-twisted unitary representations of $K$. We have the following description of $\widehat{G}$. 

\begin{lemma}\label{lem:twisting-rep} There is a natural isomorphism of sets
\[
\widehat{G}\cong \coprod_{\rho \in \widehat{Z}} \widehat{K}_{\rho\circ \nu}. 
\]
\end{lemma}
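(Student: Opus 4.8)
The plan is to set up an explicit bijection realizing each side, organized by the central character, so that Schur's lemma is the only nontrivial input. Throughout, every representation in question is finite-dimensional and unitary over $\complex$, hence completely reducible, which is what makes the reductions below clean. First I would attach to each $\pi\in\widehat{G}$ a \emph{central character}: since $Z=Z(G)$ is central in $G$ and $\pi$ is irreducible, Schur's lemma forces $\pi(z,1)=\rho(z)\,\mathrm{id}$ for a unique $\rho\in\widehat{Z}$. Sorting $\widehat{G}$ by the value of this central character is exactly what produces the disjoint union over $\rho$ on the right-hand side.

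Fixing the section $k\mapsto(1,k)$ of $G\to K$ (legitimate because $\nu$ is normalized), I would then define $P(k):=\pi(1,k)$ on the representation space of $\pi$. Using the multiplication law (\ref{eq:center-mult}) together with $(\nu(k_1,k_2),k_1k_2)=(\nu(k_1,k_2),1)(1,k_1k_2)$ and the central character, a one-line computation gives
$$P(k_1)P(k_2)=\pi\big(\nu(k_1,k_2),k_1k_2\big)=(\rho\circ\nu)(k_1,k_2)\,P(k_1k_2),$$
so $P$ is a $(\rho\circ\nu)$-projective representation of $K$, i.e.\ an element of $\widehat{K}_{\rho\circ\nu}$. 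Conversely, given $\rho\in\widehat{Z}$ and a $(\rho\circ\nu)$-projective representation $P$ of $K$, I would set $\pi(z,k):=\rho(z)P(k)$ and check directly from (\ref{eq:center-mult}) that $\pi$ is an honest representation of $G$: the scalar discrepancy $\rho(\nu(k_1,k_2))$ contributed by $P$ is cancelled precisely by $\rho$ being multiplicative on the $Z$-component. These two constructions are visibly mutually inverse.

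The one point requiring care — which I regard as the only real obstacle — is that irreducibility, and more generally the isomorphism class, is preserved in both directions. This follows from the factorization $\pi(z,k)=\rho(z)P(k)$: a subspace is $\pi(G)$-invariant if and only if it is $P(K)$-invariant, since the $Z$-action only rescales, so the two invariant-subspace lattices coincide and $\pi$ is irreducible iff $P$ is. The same identification shows that intertwiners for $\pi,\pi'$ with the same central character are exactly intertwiners for $P,P'$, so isomorphism classes correspond. Assembling the per-$\rho$ bijections over all $\rho\in\widehat{Z}$ then yields the claimed natural isomorphism of sets.
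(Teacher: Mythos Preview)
Your proof is correct and follows essentially the same approach as the paper: both extract the central character via Schur's lemma, pass to the projective $K$-representation along the section $k\mapsto(1,k)$, invert by $\pi(z,k)=\rho(z)P(k)$, and verify irreducibility by identifying commutants/invariant subspaces. Your additional remark that intertwiners match (hence isomorphism classes correspond) makes explicit a point the paper leaves to the reader.
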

\begin{proof}
Let $\alpha$ be an irreducible unitary $G$-representation. As $Z$ is the center of $G$, for any element $z$, $\alpha(z)$ commutes with the $G$-representation. Since $\alpha$ is irreducible, $\alpha(z)$ must be a scalar multiple of the identity operator. Hence, the restriction of $\alpha$ to $Z$ is of the form $\alpha(z)=\rho_\alpha (z) Id$,  $\forall z\in Z$, where $\rho_\alpha$ is a character on $Z$.  For $k\in K$, by the identification (\ref{eq:center-mult}), $(1, k)$ is an element of $G$. The following computation shows that the map $\alpha': k\mapsto \alpha(1,k)$ defines a twisted representation of $K$ with respect to the cocycle $\rho_\alpha\circ \nu$, 
\begin{eqnarray*}
\alpha(1,k_1)\alpha(1, k_2)&=&\alpha\big((1, k_1)(1, k_2)\big)\\
&=&\alpha(\nu(k_1, k_2), k_1k_2)\\
&=&\alpha\Big( \big(\nu(k_1, k_2), 1\big)\big(1, k_1k_2\big)\Big)\\
&=&\rho_{\alpha}\big(\nu(k_1, k_2)\big)\alpha(1, k_1k_2). 
\end{eqnarray*}
We observe that an operator $T$ commutes with the $\rho_\alpha\circ \nu$-twisted representation $\alpha'$ of $K$ if and only if $T$ commutes with the representation $\alpha$ of $G$. Therefore, as $\alpha$ is irreducible, so is $\alpha'$. Define the map $\Psi$ by mapping $\alpha$ to $\Psi(\alpha)=\alpha'$. 

Given $\beta\in \widehat{K}_{\rho\circ \nu}$, a $\rho\circ \nu$-twisted irreducible $K$-representation, define a representation $\alpha$ of $G$ by 
\[
\alpha(z,k)=\rho(z)\beta(k).
\] 
The following computation shows that $\alpha$ is a representation of $G$, 
\begin{eqnarray*}
\alpha\big(z_1z_2\nu(k_1, k_2), k_1k_2\big)&=&\rho\big(z_1z_2\nu(k_1, k_2)\big)\beta(k_1k_2)\\
&=&\rho(z_1)\rho(z_2)\rho\circ\nu(k_1, k_2)\beta(k_1k_2)\\
&=&\rho(z_1)\rho(z_2)\beta(k_1)\beta(k_2)\\
&=&\alpha(z_1, k_1)\alpha(z_2, k_2).
\end{eqnarray*}
Observe that an operator $T$ commutes with the representation $\alpha$ of $G$ if and only if $T$ commutes with the $\rho\circ \nu$-twisted $K$-representation $\beta$. As $\beta$ is irreducible, it follows that the representation $\alpha$ is also irreducible. Accordingly, define a map $\Phi: \coprod_{\rho\in \widehat{Z}}\widehat{K}_{\rho\circ \nu}\to \widehat{G}$ by mapping $\beta$ to $\Phi(\beta):=\alpha$.  

It is straight forward to check that $\Psi$ and $\Phi$ are inverse to each other, and provide the desired isomorphism. 
\end{proof}

We extend this discussion to the study of $\mathfrak{H}\rightrightarrows M$.  Following the extension (\ref{eq:zg-ext}), we view $\Y$, which is represented by $\mathfrak{H}$, as a $Z(G)$-gerbe over $\mathcal{Z}$, which is represented by $\mathfrak{K}\rightrightarrows M$. The dual of $\Y$, as a $Z(G)$-gerbe over $\mathcal{Z}$, is $\widehat{\Y}_{\mathcal{Z}}$, which is represented by $\widehat{Z}(G)\times \mathfrak{K}\rightrightarrows \widehat{Z}(G) \times M$. $\widehat{\Y}_{\mathcal{Z}}$ is equipped with a discrete torsion, which is represented by a locally constant $U(1)$-valued 2-cocycle $\hat{c}$ on $\mathfrak{K}$ defined by
\[
\hat{c}\big(([\lambda], k_1, q_1), ([\lambda], k_2, q_2)\big)= \lambda\Big(\mu\big((k_1,q_1),(k_2, q_2)\big)\Big),\qquad [\lambda]\in \widehat{Z}(G).
\]
As $\mathfrak{K}$ is of the product form $K\times \mathfrak{Q}\rightrightarrows M$, with a product type $Z(G)$-valued 2-cocycle $\mu=\nu\boxtimes \tau$, the cocycle $\hat{c}$ is also of product type, i.e. 
\begin{equation}\label{eq:cocycle}
\hat{c}_\lambda=\hat{c}_{\lambda, K}\boxtimes \hat{c}_{\lambda, \mathfrak{Q}},\ \hat{c}_{\lambda, K}(k_1, k_2)=\lambda\big(\nu(k_1, k_2)\big),\ \hat{c}_{\lambda, \mathfrak{Q}}(q_1, q_2)=\lambda\big(\tau(q_1, q_2)\big),
\end{equation}
where $\widehat{c}_{\lambda, K}$ and $\widehat{c}_{\lambda, \mathfrak{Q}}$ are respectively $U(1)$-valued 2-cocycle on $K$ and $\mathfrak{Q}$.  

\subsection{Two theorems}\label{subsec:two-thms}
Inspired by discussions in Sec. \ref{subsec:structure-gerbe} and \ref{subsec:torsion}, we prove the following results about Gromov-Witten invariants. 

Eq. (\ref{eq:zg-ext}) suggests that a banded $G$-gerbe over a smooth proper Deligne-Mumford stack over $\complex$ is a banded $Z(G)$-gerbe. Motivated by this property, we prove the following theorem for GW-invariants on a banded $G$-gerbe when $G$ is abelian.  

\begin{theorem}\label{thm:main-abelian} 
Let $G$ be a finite abelian group, and $\mathfrak{Q}\rightrightarrows M$ be a proper \'etale groupoid. Let $\tau$ be a $G$-valued 2-cocycle on $\mathfrak{Q}\rightrightarrows M$. Define a proper \'etale groupoid $\mathfrak{H}$ to be $G\times_{\tau} \mathfrak{Q}\rightrightarrows M$ with 
\[
(g_1, q_1)(g_2, q_2)=\big(g_1g_2\tau(q_1, q_2), q_1q_2\big). 
\]

Let $\widehat{G}$ be the set of isomorphism classes of irreducible unitary $G$-representations. Define a proper \'etale groupoid $\widehat{\mathfrak{Q}}$ to be $\widehat{G}\times \mathfrak{Q}$ with the unit space $\widehat{G}\times M$.  On $\widehat{\mathfrak{Q}}$, define a $U(1)$-valued $2$-cocycle $c$ by 
\[
c\big((\rho, q_1), (\rho, q_2) \big)=\rho(\tau(q_1, q_2)). 
\]
Denote the restriction of $c$ to the $[\rho]$-component of $\widehat{\mathfrak{Q}}$ by $c_\rho$, a $U(1)$-valued 2-cocycle on the groupoid $\mathfrak{Q}\rightrightarrows M$. 

Let $\Y$ be the Deligne-Mumford stack associated to $\mathfrak{H}$, and $\widehat{\Y}$ be the corresponding DM stack associated to $\widehat{\mathfrak{Q}}$. Let $I: H^\bullet_{CR}(\Y)\to H^\bullet_{CR}(\widehat{\Y}, c)$ be the isomorphism obtained in \cite{ta-ts}. The orbifold $\widehat{\Y}$ is a disjoint union of $\widehat{\Y}_\rho$, which is diffeomorphic to $\B$ equipped with a $U(1)$-valued $2$-cocycle $c_\rho$. For $\delta\in H^\bullet_{CR}(\Y)$, write 
\[
I(\delta)=\sum_{[\rho]\in \widehat{G}}I(\delta)_\rho\in H^\bullet(\widehat{\Y}, c_\rho). 
\]
Then Theorem \ref{thm:main} holds in this setting. More explicitly, let $g\geq 0$, $n>0$, and $a_1,...,a_n\geq 0$ be integers. Let $\beta\in H_2(\B, \mathbb{Q})=H_2(\Y, \mathbb{Q})$ be a curve class. Let $\delta_1,..., \delta_n\in H^\bullet_{CR}(\Y)$. Then we have the following equality of GW invariants:
\[
\<\prod_{j=1}^n \delta_j\psi_j^{a_j}\>_{g,n,\beta}^\Y=\sum_{\rho\in \widehat{G}}\left(\frac{1}{|G|}\right)^{2-2g}\<\prod_{j=1}^n I(\delta_j)_\rho\psi_j^{a_j}\>_{g,n,\beta}^{\B, c_\rho}.
\]
\end{theorem}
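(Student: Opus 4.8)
The plan is to realize the Gromov--Witten invariants of $\Y$ as integrals over $\Mbar_{g,n}(\B,\beta)$ by pushing forward along the natural morphism $\pi\colon \Mbar_{g,n}(\Y,\beta)\to \Mbar_{g,n}(\B,\beta)$ that composes a stable map to $\Y$ with $\Y\to\B$, and then to recognize the resulting integrand as the character-by-character twisted invariants on the right-hand side. First I would decompose both moduli spaces into the open-and-closed components selected by the choice of orbifold structure (monodromy/sector data) at the marked points: a component of $\Mbar_{g,n}(\Y,\beta)$ is labelled by sectors recording, in addition to the $\B$-monodromy $h_j$, an element $g_j\in G$ of the band at each marked point, while a component of $\Mbar_{g,n}(\B,\beta)$ is labelled only by the $h_j$. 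The morphism $\pi$ sends the $(h_j,g_j)$-component to the $(h_j)$-component by forgetting the band data, and because the evaluation maps are compatible (the $\B$-evaluation factors through $\pi$ followed by the map that forgets $g_j$), the class $ev_j^*\delta_j$ restricts on each component to a class pulled back from $\B$ once the sector $g_j$ is fixed.

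The core input is the degree of $\pi$ on each such component. Using the detailed analysis of liftings of stable maps carried out in Section \ref{sec:degree} together with the counting formula of Appendix \ref{app:counting_result}, I would show that the fibre of $\pi$ over a stable map $f\colon C\to\B$ is, when non-empty, a torsor-type object governed by $H^1$ of the orbicurve with coefficients in $G$, subject to the constraint that the product of the band monodromies $g_j$ equals the holonomy of the pullback of $\tau$ along $C$; together with the extra band automorphisms carried by objects of a gerbe this yields a weighted degree equal to $|G|^{2g-2}$ on each non-empty component, independent of $\beta$ and of the $h_j$. The virtual pushforward formula, Lemma \ref{lem:pushforward} (Appendix \ref{app:vir_push}), then upgrades this to the identity $\pi_*[\Mbar_{g,n}(\Y,\beta)]^{vir}=|G|^{2g-2}\,[\Mbar_{g,n}(\B,\beta)]^{vir}$ on the matching components, so that the $\psi$-classes (which are pulled back along $\pi$) and the virtual class can both be transported by the projection formula.

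With the pushforward in hand, I would apply the projection formula to rewrite $\langle\prod_j\delta_j\psi_j^{a_j}\rangle^{\Y}_{g,n,\beta}$ as an integral over $[\Mbar_{g,n}(\B,\beta)]^{vir}$ of $|G|^{2g-2}$ times the $\pi$-pushforward of $\prod_j ev_j^*\delta_j$. The final and most delicate step is to identify this pushed-forward integrand with $\sum_{\rho}\theta_*\!\big(\prod_j ev_j^*I(\delta_j)_\rho\big)$. Here the band data is summed out: the sum over the allowed band monodromies $(g_j)$, weighted by the holonomy phases of $\tau$ imposed by the constraint in the degree count, is precisely the discrete Fourier transform defining $I$, and orthogonality of the characters of $G$ converts the phases $\rho(\tau(\cdots))$ into the holonomy of the flat gerbe $c_\rho$. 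This is exactly the datum encoded by the line bundle $\sL_{c_\rho}\to I\B$ and its canonical trivialization $\theta$ on $\Mbar_{g,n}(\B,\beta)$, so the reorganized sum produces the $c_\rho$-twisted invariants, and calibrating the normalization of $I$ against the degree factor $|G|^{2g-2}$ yields the stated prefactor $(1/|G|)^{2-2g}$.

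I expect the main obstacle to be this last identification: the bookkeeping that matches the holonomy weights arising in the lifting/degree count with the holonomy line bundle $\sL_{c_\rho}$ and the trivialization $\theta$ entering the definition of the twisted invariants. The computation of Section \ref{sec:degree} must produce not merely the numerical factor $|G|^{2g-2}$ but the correct $\tau$-dependent weights on each lift, and these must be shown to assemble, after character orthogonality, into exactly the twist by $c_\rho$. Verifying this compatibility --- rather than the projection-formula formalities --- is the technical heart of the argument.
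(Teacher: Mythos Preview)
Your strategy is essentially the paper's: decompose by sector data, apply the virtual pushforward formula (Lemma~\ref{lem:pushforward}), use the degree computation from Section~\ref{sec:degree}, and match the result with the $c_\rho$-twisted invariants via the holonomy. The paper organizes the bookkeeping slightly differently --- it fixes a single $\rho$, inserts classes of the form $J(\varphi_j)$ with $\varphi_j\in H^\bullet(I\B,\sL_{c_\rho})$ (so the character factor $\rho(g_j)$ is present from the outset via the formula for $J=I^{-1}$), and then identifies $\prod_j\rho(g_j)$ directly with the trivializing constant $\theta_{c_\rho}$ (this is Lemma~\ref{lem:hol}). Your ``push forward, then Fourier-invert over $\rho$'' is the linear-dual of this, and equally valid.

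There is, however, a concrete numerical slip in your degree computation. The pushforward degree on a non-empty component is $|G|^{2g-1}$, not $|G|^{2g-2}$. This is exactly Lemma~\ref{lem:degree}: in the abelian case $d_{\vec{\mathfrak l}}=(1/|G|)^{1-2g}\cdot\#\{(g_j):g_1\cdots g_n c=1\}$, and for a fixed admissible tuple the count is~$1$. Your ``extra band automorphisms'' argument double-counts: the quotient by the $G$-conjugation in Theorem~\ref{thm:degree} already records the band automorphisms of a lift, so the torsor under $H^1(C,G)\cong G^{2g}$ divided once by $|G|$ gives $|G|^{2g-1}$. The missing factor of $|G|$ enters in your final Fourier step, via character orthogonality $\sum_{\rho\in\widehat G}\rho(x)=|G|\,\delta_{x,e}$; combined with $d=|G|^{2g-1}$ this produces the prefactor $(1/|G|)^{2-2g}$. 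If you keep $d=|G|^{2g-2}$ you will be off by $|G|$ at the end.

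One further small oversimplification: sectors of $I\Y$ are not quite labelled by pairs $(h_j,g_j)$ with $g_j\in G$ --- a conjugacy class $\mathfrak l_j$ of the stabilizer of $\Y$ can involve several $G$-classes (see Lemma~\ref{lem:conjugacy} and Definition~\ref{dfn:conjugacy}), because conjugation in $\mathfrak H$ translates the $G$-coordinate by a $\tau$-dependent element. This does not affect your argument once you sum over sectors, but it does mean the ``component labelled by $(h_j,g_j)$'' language needs to be interpreted through $\mathsf l_j$.

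You are right that the delicate point is matching the constraint $g_1\cdots g_n c=1$ (with $c=\prod_i hol(\Y_\Sigma,\tau)(L_i,z_i)$) to the trivialization $\theta_{c_\rho}$; this is precisely Lemma~\ref{lem:hol}, and it is indeed the technical heart.
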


\begin{proof}The proof is presented in Sec. \ref{sec:thm-abelian}. 
\end{proof}

Eq. (\ref{eq:k-ext}) suggests that there is a product gerbe naturally associated to a $G$-gerbe on a smooth proper Deligne-Mumford stack. Let $G$ be a finite group. Inspired by the discussion on the cocycle (\ref{eq:cocycle}), we consider a discrete torsion (of a special form) on the product of a smooth proper Deligne-Mumford stack $\B$ with $BG$, and (twisted) Gromov-Witten invariants on the product . 

\begin{theorem}\label{thm:product} 
Let $G$ be a finite group, and $\mathfrak{Q}\rightrightarrows M$ be a proper \'etale groupoid. Let $c_G$ and $c_{\mathfrak{Q}}$ be locally constant $U(1)$-valued 2-cocycles on $G$ and $\mathfrak{Q}\rightrightarrows M$. Let $\widetilde{\mathfrak{Q}}$ be the product groupoid of $G$ and $\mathfrak{Q}$ equipped with the product $2$-cocycle $c_{\widetilde{\mathfrak{Q}}}:=c_G\boxtimes c_{\mathfrak{Q}}$ defined by 
\[
c_{\widetilde{\mathfrak{Q}}}\big((g_1, q_1), (g_2, q_2)\big):=c_{G}(g_1, g_2)c_{\mathfrak{Q}}(q_1, q_2).
\]
Let $\B$ and $\widetilde{\B}$ be the Deligne-Mumford stacks associated with $\mathfrak{Q}$ and $\widetilde{\mathfrak{Q}}$. $\widetilde{\B}$ is the product of $BG$ and $\B$, and $H^\bullet(\widetilde{\B}, c_{\widetilde{\mathfrak{Q}}})$ isomorphic to $H^\bullet(\B, c_{\mathfrak{Q}})\otimes H^\bullet(BG, c_G)$. Let $\widehat{G}_{c_G}$ be the set of isomorphism classes of irreducible $c_G$-twisted  unitary $G$-representations.

Let $g\geq 0$, $n>0$, and $a_1,...,a_n\geq 0$ be integers. Let $\beta\in H_2(\B, \mathbb{Q})$ be a  curve class. 
Let $\delta_1, \cdots, \delta_n\in H_{CR}^\bullet(\widetilde{\B}, c_{\widetilde{\mathfrak{Q}}})$. 
Then there is an equality of twisted Gromov-Witten invariants:
\[
\<\prod_{j=1}^n \delta_j \psi_j^{a_j}\>^{\widetilde{\B}, c_{\widetilde{\mathfrak{Q}}}}_{g,n,\beta}=\sum_{[\rho]\in \widehat{G}_{c_G}}\left(\frac{\text{dim}\, V_\rho}{|G|}\right)^{2-2g}\<\prod_{j=1}^nI(\delta_j) \psi_j^{a_j}\>^{\B, c_{\mathfrak{Q}}}_{g,n,\beta}. 
\]
where $I: H_{CR}^\bullet(\widetilde{\B}, c_{\widetilde{\mathfrak{Q}}})\to\oplus_{[\rho]\in \widehat{G}_{c_G}}H_{CR}^\bullet(\B, c_{\mathfrak{Q}})$ is an isomorphism introduced in \cite{sh-ta-ts}.
\end{theorem}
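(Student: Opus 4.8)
The plan is to exploit the product structure $\widetilde{\B}=BG\times\B$ to reduce the twisted invariants of $\widetilde{\B}$ to twisted invariants of $\B$ weighted by a representation-theoretic count of $G$-covers. First I would consider the projection $p:\widetilde{\B}\to\B$ and the induced morphism $\pi:\Mbar_{g,n}(\widetilde{\B},\beta)\to\Mbar_{g,n}(\B,\beta)$ of moduli of twisted stable maps; since $BG$ supports no nonzero curve class, the degree $\beta$ is carried entirely by the $\B$-factor. I would show $\pi$ is proper and quasi-finite, and identify its fiber over a stable map $[f:\mathcal{C}\to\B]$ with the finite groupoid of representable maps from the domain to $BG$, i.e. $G$-covers of $\mathcal{C}$ compatible with its orbifold structure. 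Organizing the components of $\Mbar_{g,n}(\widetilde{\B},\beta)$ by the monodromy data $(h_1,\dots,h_n)$ in $G$ at the markings (together with the chosen $\B$-sectors) gives a decomposition along which the degree of $\pi$ is locally constant; this is precisely the kind of lifting analysis carried out in Section \ref{sec:degree} for gerbes, specialized here to a trivial product.

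Next I would address the virtual class. Because $BG$ is \'etale, a map to $\widetilde{\B}$ has the same deformation--obstruction theory as its $\B$-component, so the relative obstruction theory of $\pi$ is trivial and the weighted virtual classes are compatible: on each component, $\pi_*[\Mbar_{g,n}(\widetilde{\B},\beta)]^{vir}=N\cdot[\Mbar_{g,n}(\B,\beta)]^{vir}$, where $N$ is the weighted degree of $\pi$, a locally constant number equal to the appropriately weighted count of $G$-covers with the prescribed monodromy. I would make this precise and reconcile it with the weighted-virtual-class convention using the virtual pushforward property of Appendix \ref{app:vir_push}. I would also record that the coarse domain curve is unchanged by the $BG$-cover data, so $\pi^*\psi_j=\psi_j$ and the descendant classes pull back from the target.

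With these in hand I would reduce the integral. Under $H^\bullet_{CR}(\widetilde{\B},c_{\widetilde{\mathfrak{Q}}})\cong H^\bullet_{CR}(BG,c_G)\otimes H^\bullet(\B,c_{\mathfrak{Q}})$ the insertions $\delta_j$ split into a $BG$-part and a $\B$-part, and the holonomy line bundle factors as $\sL_{c_{\widetilde{\mathfrak{Q}}}}=\sL_{c_G}\boxtimes\sL_{c_{\mathfrak{Q}}}$. The $\B$-parts, the twisting $\sL_{c_{\mathfrak{Q}}}$, and the classes $\psi_j$ all pull back from $\Mbar_{g,n}(\B,\beta)$, so by the projection formula the invariant becomes the integral over $[\Mbar_{g,n}(\B,\beta)]^{vir}$ of the $\B$-insertions against $\pi_*$ of the purely fiber data (the $BG$-characters and $\sL_{c_G}$). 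This pushforward is a locally constant weight: a $c_G$-twisted, monodromy-constrained count of $G$-covers of a genus-$g$ surface with $n$ marked points, i.e. the genus-$g$ $n$-point correlator of the twisted two-dimensional theory attached to $(G,c_G)$. Expanding the $BG$-insertions in the basis of $c_G$-twisted irreducible characters $\{\chi_\rho\}_{[\rho]\in\widehat{G}_{c_G}}$ and invoking the counting formula of Appendix \ref{app:counting_result} together with the twisted theory of a classifying stack in Appendix \ref{app:twisted_bgamma}, this weight evaluates to $\sum_{[\rho]}(\dim V_\rho/|G|)^{2-2g}$ times the projection onto the $\rho$-isotypic component. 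Identifying this projection with the map $I$ of \cite{sh-ta-ts} (which absorbs the character normalizations) assembles the claimed equality.

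The main obstacle is the evaluation of the twisted $G$-cover count and its identification with the weights $(\dim V_\rho/|G|)^{2-2g}$. This requires the $c_G$-twisted analogue of the Mednykh/Frobenius surface-count formula: one must verify the twisted orthogonality relations and the decomposition of the $c_G$-twisted group algebra into blocks indexed by $\widehat{G}_{c_G}$, and then track the discrete-torsion weight through nodal degenerations so that the locally constant count is genuinely governed by the gluing axioms of the twisted theory---this is what produces the Euler-characteristic exponent $2-2g$. A secondary technical point is the virtual-class compatibility in step two: matching the weighted-virtual-class conventions through $\pi$ and keeping the monodromy bookkeeping at markings and nodes consistent with the banding data, which is exactly what Appendix \ref{app:vir_push} is designed to handle.
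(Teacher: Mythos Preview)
Your approach is essentially the paper's own. The paper factors through the map $p:\Mbar_{g,n}(\B\times BG,d)\to\Mbar_{g,n}(\B,d)\times\Mbar_{g,n}(BG)$, verifies the trivialization splits as $\theta_{c_1\boxtimes c_2}=p^*(\theta_{c_1}\boxtimes\theta_{c_2})$, and then (in its alternative proof of the key proposition) applies the virtual pushforward along $p_1\circ p$ to get $(p_1p)_*[\Mbar_{g,n}(\B\times BG,d)]^{vir}=\Omega_g^G((g_1),\dots,(g_n))[\Mbar_{g,n}(\B,d)]^{vir}$, exactly as you propose; the remaining $BG$-weight $\Lambda_{g,n}^{BG,c_2}$ is then evaluated on the idempotent basis $\{f_\rho\}$ via Appendix~\ref{app:twisted_bgamma}, and the inverse of $I$ is identified with $a\mapsto a\otimes f_\rho$.

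One small clarification: the degree of your $\pi$ is the \emph{untwisted} cover count $\Omega_g^G((g_1),\dots,(g_n))$; the cocycle $c_G$ enters only through the trivialization factor $\theta_{c_G}(1_{(g_1)},\dots,1_{(g_n)})$, and it is the product $\Lambda_{g,n}^{BG,c_G}=\Omega_g^G\cdot\theta_{c_G}$ that diagonalizes on $\{f_\rho\}$ with eigenvalues $(\dim V_\rho/|G|)^{2-2g}$. Keeping these two contributions separate is what makes Appendix~\ref{app:twisted_bgamma} work cleanly, so be careful not to fold the twist into the cover count itself.
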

\begin{proof}
The proof is given in Sec. \ref{sec:pf_thm_prod}. In particular, the isomorphism $I$ is described there.
\end{proof}

\subsection{Proof of Theorem \ref{thm:main}} In this subsection, we use the results developed in Section \ref{subsec:two-thms} to complete the proof of Theorem \ref{thm:main}. 

We start with recalling and summarizing the whole set up.
\begin{enumerate}
\item  Let $\Y$ be a banded $G$-gerbe over a smooth proper Deligne-Mumford stack over $\B$, and $Z(G)$ be the center of $G$. 
\item We represent $\B$ by a proper \'etale groupoid $\mathfrak{Q}\rightrightarrows M$, and represent $\Y$ by the groupoid $G\times_{\tau}\mathfrak{Q}\rightrightarrows M$, where $\tau$ is a smooth $Z(G)$-valued 2-cocycle on $\mathfrak{Q}$. 
\item Let $K$ be the quotient group $G/Z(G)$. Let $\mathfrak{K}\rightrightarrows M$ be the product groupoid $K\times \mathfrak{Q}\rightrightarrows M$, and $\mathcal{Z}$ be the associated Deligne-Mumford stack. 
\item We denote the dual of $\Y$, as a $Z(G)$-gerbe over $\mathcal{Z}$,  by $\widehat{\Y}_{\mathcal{Z}}$, which is represented by $\widehat{Z(G)}\times \mathfrak{K}\rightrightarrows \widehat{Z(G)}\times M$.  $\widehat{\Y}_{\mathcal{Z}}$ can be decomposed as follows, 
\[
\widehat{\Y}_{\mathcal{Z}}=\coprod_{\lambda\in \widehat{Z}(G)} \widehat{\Y}_\mathcal{Z}^\lambda,
\]
where $\widehat{\Y}_\mathcal{Z}^\lambda$ is isomorphic to $\mathcal{Z}$ represented by $\mathfrak{K}\rightrightarrows M$.
\item For each $\lambda\in \widehat{Z}(G)$, there is a $U(1)$-gerbe $\widehat{c}_\lambda$ on $\mathcal{Z}_\lambda=\widehat{\Y}^\lambda_{\mathcal{Z}}$ by Eq. (\ref{eq:cocycle}). On the groupoid $\mathfrak{K}=K\times \mathfrak{Q}\rightrightarrows M$, the $U(1)$-gerbe $\widehat{c}_\lambda$ is represented by $\hat{c}_{\lambda, K}\boxtimes \hat{c}_{\lambda, \mathfrak{Q}}$, where $\widehat{c}_{\lambda,K}$ is a 2-cocycle on $K$ and $\widehat{c}_{\lambda, \mathfrak{Q}}$ is a 2-cocycle on $\mathfrak{Q}$. 
\item Let 
\[
I_{Z(G)}: H^\bullet_{CR}(\Y)\to H^\bullet_{CR}(\widehat{\Y}_{\mathcal{Z}}, \hat{c})=\bigoplus _{\lambda \in \widehat{Z}(G)}H^\bullet_{CR}(\widehat{\Y}^\lambda_{\mathcal{Z}}, \widehat{c}_\lambda),
\] 
be the isomorphism introduced in \cite[Theorem 4.16]{ta-ts}.
\item Let $\widehat{\mathcal{Z}}_\lambda$ be the dual orbifold associated to $(\mathcal{Z}_\lambda, \widehat{c}_{\lambda, K})$ as a trivial $K$-gerbe over $\B$ equipped with the $U(1)$-gerbe $\widehat{c}_{\lambda, K}$. Let $\widehat{K}_\lambda$ be the set of isomorphism classes of $\widehat{c}_{\lambda,K}$-twisted irreducible unitary $K$-representations. $\widehat{\mathcal{Z}}_\lambda$ is decomposed as follows
\[
\widehat{\mathcal{Z}}_\lambda=\coprod_{\nu\in \widehat{K}_\lambda} \B_{\lambda, \nu},
\] 
where $\B_{\lambda, \nu}$ is isomorphic to $\B$ and equipped with a $U(1)$-gerbe $\widehat{c}_{\lambda, \mathfrak{Q}}$. According to Lemma \ref{lem:twisting-rep}, we have the following identification, 
\[
\coprod_{\lambda\in \widehat{Z}(G)}\widehat{\mathcal{Z}}_\lambda=\coprod_{\lambda\in \widehat{Z}(G)}\coprod_{\nu\in \widehat{K}_{\lambda}} \B_{\lambda, \nu}=\coprod_{\rho\in \widehat{G}} \widehat{\Y}_\rho=\widehat{\Y}. 
\] 
\item Let 
\[
I^\lambda_K: H^\bullet_{CR}(\mathcal{Z}_\lambda, \widehat{c}_\lambda)\to H^\bullet_{CR}(\widehat{\mathcal{Z}}_\lambda, \widehat{c}_{\lambda, \mathfrak{Q}})=\bigoplus_{\nu\in \widehat{K}_\lambda} H^\bullet_{CR}(\B_{\lambda, \nu}, \widehat{c}_{\lambda, \mathfrak{Q}})
\]
be the isomorphism introduced in Theorem \ref{thm:product}. 
\end{enumerate}
\begin{lemma}\label{lem:isomorphism} The composition 
\[
I^\lambda_K\circ I_{Z(G)}: H^\bullet_{CR}(\Y)\to \bigoplus_{\lambda \in \widehat{Z}(G)}\bigoplus_{\nu\in \widehat{K}_\lambda} H^\bullet_{CR}(\B_{\lambda, \nu}, \widehat{c}_{\lambda, \mathfrak{Q}})
\]
agrees with the isomorphism 
\[
I_G: H^\bullet_{CR}(\Y)\to \bigoplus_{\rho \in \widehat{G}} H^\bullet_{CR}(\B_\rho, c_\rho)
\]
under the isomorphism $\widehat{G}\cong \coprod_{\lambda \in \widehat{Z}(G)}\widehat{K}_{\lambda}$ in Lemma \ref{lem:twisting-rep}. 
\end{lemma}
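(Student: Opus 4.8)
\emph{The plan} is to prove the lemma by unwinding the explicit representation-theoretic descriptions of the three isomorphisms and reducing the asserted equality to the multiplicativity of characters recorded in Lemma \ref{lem:twisting-rep}. All of $I_G$ (from \cite{ta-ts}), $I_{Z(G)}$ (from \cite[Theorem 4.16]{ta-ts}), and $I^\lambda_K$ (from Theorem \ref{thm:product}, following \cite{sh-ta-ts}) are cohomological incarnations of the same ``Fourier/Peter--Weyl'' transform: on the inertia groupoid of $\mathfrak{H}=G\times_\tau\mathfrak{Q}$, a class $\delta\in H^\bullet_{CR}(\Y)$ is encoded by its restrictions to the sectors carrying the group variable, and each duality map takes a weighted sum of these restrictions against (twisted) characters. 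The guiding principle is that these transforms are the cohomological shadow of the decomposition of the twisted groupoid algebra of $\mathfrak{H}$ into its spectral components, so that the compatibility we must prove is the cohomological reflection of the factorization $\mathbb{C}[G]\cong\bigoplus_{\lambda\in\widehat{Z}(G)}\mathbb{C}^{\widehat{c}_{\lambda,K}}[K]$ into twisted group algebras of $K$ indexed by central characters. Using the presentation $G=Z(G)\times_\nu K$ of (\ref{eq:center-mult}), I would write the group variable as a pair $(z,k)$, so that $I^\lambda_K\circ I_{Z(G)}$ first transforms in $z\in Z(G)$ against characters $\lambda\in\widehat{Z}(G)$ and then transforms in $k\in K$ against $\widehat{c}_{\lambda,K}$-twisted characters $\nu\in\widehat{K}_\lambda$, whereas $I_G$ transforms the single variable $(z,k)$ against the irreducible characters of $G$.

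With this setup the heart of the argument is a pointwise identity of transforms supplied by Lemma \ref{lem:twisting-rep}: under the correspondence
\[
\widehat{G}\;\cong\;\coprod_{\lambda\in\widehat{Z}(G)}\widehat{K}_\lambda,\qquad \rho\longleftrightarrow(\lambda,\nu),\qquad \chi_\rho(z,k)=\lambda(z)\,\chi_\nu(k),
\]
the character of $\rho$ factors exactly through the central character $\lambda=\rho|_{Z(G)}$ and the twisted character $\chi_\nu$. Substituting this factorization into the defining formula for $I_G$, I would split the single weighted sum over $(z,k)$ as an iterated sum: the inner sum over $z\in Z(G)$ against $\lambda$ is precisely the $Z(G)$-transform producing the $\lambda$-component over $\mathcal{Z}_\lambda$, i.e. $I_{Z(G)}$, and the outer sum over $k\in K$ against $\chi_\nu$ is precisely the $\widehat{c}_{\lambda,K}$-twisted $K$-transform $I^\lambda_K$. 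By Fubini for finite sums this iterated transform equals the single $G$-transform, which gives $I_G=I^\lambda_K\circ I_{Z(G)}$ summand by summand. To make this a genuine equality of maps I would also identify the coefficient line bundles: the product structure $\mu=\nu\boxtimes\tau$ of (\ref{eq:cocycle}) yields $\widehat{c}_{\lambda,\mathfrak{Q}}(q_1,q_2)=\lambda(\tau(q_1,q_2))$, and since $\tau$ is $Z(G)$-valued with $\rho|_{Z(G)}=\lambda$ this is exactly the holonomy cocycle $c_\rho(q_1,q_2)=\rho(\tau(q_1,q_2))$ defining $\sL_{c_\rho}$, so $H^\bullet_{orb}(\B_{\lambda,\nu},\widehat{c}_{\lambda,\mathfrak{Q}})=H^\bullet_{orb}(\B_\rho,c_\rho)$ under $\rho\leftrightarrow(\lambda,\nu)$.

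\emph{The main obstacle} is not conceptual but lies in the bookkeeping of the various twisting cocycles and weights through two successive transforms. Concretely, one must track how the $Z(G)$-valued gerbe cocycle $\tau$, the central-extension cocycle of (\ref{eq:center-mult}), and their product $\mu$ interact with the transforms of \cite{ta-ts} and of Theorem \ref{thm:product}, and verify that the twisted character $\chi_\nu$ appearing in $I^\lambda_K$ is normalized so that $\lambda(z)\chi_\nu(k)$ is literally the ordinary character $\chi_\rho(z,k)$, with no residual cocycle factor; this is exactly the compatibility forced by the twisted-representation relation $\alpha(1,k_1)\alpha(1,k_2)=\lambda(\nu(k_1,k_2))\,\alpha(1,k_1k_2)$ established in the proof of Lemma \ref{lem:twisting-rep}. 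I would organize the verification by fixing the normalized cocycles (as in $\nu(1,k)=\nu(k,1)=1$ and $\tau(1,q)=\tau(q,1)=1$) and the section $(z,k)$ of (\ref{eq:center-mult}) once and for all, and then comparing the two composites term by term on each inertia sector. Once the matching of index sets (Lemma \ref{lem:twisting-rep}) and of coefficient line bundles is in place, the remaining content is the elementary reindexing of a sum over $G$ as a double sum over $Z(G)$ and $K$, and the proof closes.
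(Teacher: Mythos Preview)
Your proposal is correct and follows essentially the same approach as the paper: write out the explicit transform formulas for $I_{Z(G)}$, $I^\lambda_K$, and $I_G$, split the sum over $G$ as a double sum over $Z(G)$ and $K$, and invoke the character factorization $\chi_\rho(z,k)=\lambda(z)\chi_\nu(k)$ from Lemma~\ref{lem:twisting-rep}. The paper's proof is terser---it does not explicitly verify the matching of coefficient line bundles $\widehat{c}_{\lambda,\mathfrak{Q}}=c_\rho$ that you rightly flag---but the core computation is the same Fubini-type rearrangement you describe.
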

\begin{proof}Using the formula of the isomorphism $I$ in \cite[Eq. (4.13)]{ta-ts}, given $\delta\in H^\bullet_{CR}(\Y)$ we have 
\[
I_{Z(G)}(\delta)([\lambda], k, q)=\sum_{z\in Z(G)} \delta(z, k, q)\lambda(z), 
\]
where we have used the property that $Z(G)$ is abelian and $\lambda$ is $1$-dimensional. Similar to $I_{Z(G)}$, we have
\[
I^\lambda_K\big(I_{Z(G)}(\delta)\big)([\lambda], [\nu], q)=\sum_{k\in K}I_{Z(G)}(\delta)([\lambda], k, q)\tr(\nu(k))=\sum_{k\in K, z\in Z(G)}\delta(z, k, q)\tr(\nu(k))\lambda(z).
\]
Observe that $\tr(\nu(k))\lambda(z)=\tr(\lambda(z)\nu(k))$.  By the proof of Lemma \ref{lem:twisting-rep}, $\lambda(z)\nu(k)$ corresponds to the representation $\rho$ of $G$. Therefore, we conclude the following equation,
\[
I^\lambda_K\big(I_{Z(G)}(\delta)\big)([\lambda], [\nu], q)=\sum_{\rho\in \widehat{G}}\delta(z, k, q)\tr(\rho(g))=I_G(\delta)([\rho],q),
\]
which gives the desired identification. 
\end{proof}

\noindent{\bf Proof of Theorem \ref{thm:main}.} Let $\delta_1, ..., \delta_n$ be classes in $H^\bullet_{CR}(\Y)$. By Theorem \ref{thm:main-abelian}, we have 
\[
\<\prod_{j=1}^n \delta_j\psi_j^{a_j}\>_{g,n,\beta}^\Y=\sum_{\lambda \in \widehat{Z}(G)}\left(\frac{1}{|Z(G)|}\right)^{2-2g}\<\prod_{j=1}^n I_{Z(G)}(\delta_j)_\lambda\psi_j^{a_j}\>_{g,n,\beta}^{\widehat{\Y}^\lambda_{\mathcal{Z}},\widehat{c}_\lambda}.
\]
Applying Theorem \ref{thm:product}, we have 
\[
\<\prod_{j=1}^n I_{Z(G)}(\delta_j)_\lambda\psi_j^{a_j}\>_{g,n,\beta}^{\widehat{\Y}^\lambda_{\mathcal{Z}},\widehat{c}_\lambda}=\sum_{\nu \in \widehat{K}_\lambda}\left(\frac{\dim(V_\nu)}{|K|}\right)^{2-2g}\<\prod_{j=1}^n I^\lambda_{K}\big(I_{Z(G)}(\delta_j)_\lambda \big) _{\nu}\psi^{a_j}_j\>_{g,n,\beta}^{\B_{\lambda, \nu}, \widehat{c}_\lambda}
\]
Applying Lemma \ref{lem:twisting-rep} and \ref{lem:isomorphism}, we have
\[
\begin{split}
\<\prod_{j=1}^n \delta_j\psi_j^{a_j}\>_{g,n,\beta}^\Y&=\sum_{\lambda \in \widehat{Z}(G)}\left(\frac{1}{|Z(G)|}\right)^{2-2g}\sum_{\nu \in \widehat{K}_\lambda}\left(\frac{\dim(V_\nu)}{|K|}\right)^{2-2g}\<\prod_{j=1}^n I^\lambda_{K}\big(I_{Z(G)}(\delta_j)_\lambda \big)_\nu \psi^{a_j}_j\>_{g,n,\beta}^{\B_{\lambda, \nu}, \widehat{c}_\lambda}\\
&=\sum_{\rho \in \widehat{G}}\left(\frac{\dim(V_\rho)}{|G|}\right)^{2-2g}\<\prod_{j=1}^n I_G(\delta_j)_\rho \psi^{a_j}_j\>_{g,n,\beta}^{\B, c_\rho},
\end{split}
\]
where we have used the fact that $|G|=|Z(G)||K|$.\ \ \ \ $\hfill{\Box}$
\section{Proof of Theorem \ref{thm:main-abelian}}\label{sec:thm-abelian}
In this section, we present the proof of Theorem \ref{thm:main-abelian}, which is a special case of Theorem \ref{thm:main} when $G$ is abelian. 

\subsection{Formula for the isomorphism $J=I^{-1}$} \label{subsec:I}In this subsection, we briefly recall the definition of the isomorphism $I$ in \cite{ta-ts}.  For the interest of this section, we will assume that $G$ is abelian. 

Let $\widehat{G}$ be the set of isomorphism classes of irreducible unitary representations of $G$.  Because $\Y$ is a banded $G$-gerbe, the dual orbifold $\widehat{\Y}$ can be represented by  $\widehat{G}\times \mathfrak{Q}$. For every isomorphism class $[\rho]\in \widehat{G}$, fix a representative $\rho: G\to V_\rho$, an irreducible unitary representation of $G$. On $\widehat{\Y}$, the flat $U(1)$-gerbe is determined by a $U(1)$-valued 2-cocycle on $c$ on $\widehat{G}\times \mathfrak{Q}$, 
\[
c([\rho], q_1, q_2)=\rho\big(\tau(q_1, q_2)\big). 
\]

In \cite[Sec. 4]{ta-ts}, an isomorphism $I$ from the Chen-Ruan orbifold cohomology of $\Y$ to the $c$-twisted Chen-Ruan orbifold cohomology of $\widehat{\Y}$ is introduced. We recall the explicit formula of $I$ in the case that $\Y$ is a banded $G$-gerbe. 

Let $\mathfrak{Q}^{(0)}$ be the subspace of $\mathfrak{Q}$ consisting of arrows $g\in \mathfrak{Q}$ such that $s(g)=t(g)$. When the $G$-gerbe $\Y$ is banded, $\mathfrak{H}^{(0)}$, the subspace of $\mathfrak{H}^{(0)}$ of arrows with the same source and target, is of the form 
\[
\mathfrak{H}^{(0)}= G\times \mathfrak{Q}^{(0)}. 
\]
The inertia orbifold $I\Y$ is represented by the action groupoid $\mathfrak{H}^{(0)}\rtimes \mathfrak{H}\rightrightarrows \mathfrak{H}^{(0)}$ defined by 
\[
\begin{split}
(g,q)(g_0,q_0) (g, q)^{-1}&=\big(gg_0\tau(q,q_0), qq_0\big)\big(g^{-1}\tau(q,q^{-1})^{-1}, q^{-1}\big)\\
&=\big(gg_0g^{-1}\tau(q,q_0)\tau(qq_0, q^{-1})\tau(q,q^{-1})^{-1}, qq_0q^{-1}\big). 
\end{split}
\]
The Chen-Ruan orbifold cohomology $H^\bullet_{CR}(\Y)$ of $\Y$ is computed by the de Rham cohomology of $\mathfrak{H}$-invariant differential forms $\Omega^\bullet(\mathfrak{H}^{(0)})^{\mathfrak{H}}$. 

The dual orbifold $\widehat{\Y}$ is represented by the groupoid $\widehat{G}\times \mathfrak{Q}$, $|\widehat{G}|$ copies of $\mathfrak{Q}\rightrightarrows M$. The inertia orbifold $I\widehat{\Y}$ is a disjoint union of $|\widehat{G}|$ copies of $\mathfrak{Q}^{(0)}\rtimes \mathfrak{Q}\rightrightarrows \mathfrak{Q}^{(0)}$. For every $[\rho]\in \widehat{G}$,  $\rho(\tau(q_1, q_2))$ is a locally constant $U(1)$-valued 2-cocycle on $\mathfrak{Q}$, and defines a flat line bundle $\call_{[\rho]}$ on $\mathfrak{Q}^{(0)}\rtimes \mathfrak{Q}\rightrightarrows \mathfrak{Q}^{(0)}$, which is a trivial line bundle on $\mathfrak{Q}^{(0)}$ with an action by $\mathfrak{Q}$ defined by
\begin{equation}\label{eq:linegerbe}
\call_{[\rho]} |_{q_0}\ni s \mapsto s\rho\big(
\tau(q,q_0)\tau(qq_0, q^{-1})\tau(q,q^{-1})^{-1}\big)=sc([\rho], q,q_0)c([\rho], qq_0, q^{-1})c([\rho], q, q^{-1})^{-1}.
\end{equation}
This defines a line bundle $\sL_c$ on $\widehat{G}\rtimes \mathfrak{Q}^{(0)}$.
The twisted Chen-Ruan orbifold cohomology of $\widehat{\Y}$ by $c$ is computed by the de Rham cohomology of $\mathfrak{Q}$-invariant $\call_c$-twisted differential forms $\Omega^\bullet(\widehat{G}\rtimes \mathfrak{Q}^{(0)}, \call_c)^{\mathfrak{Q}}$.

In \cite[Sec. 4, Eq. (4.13)]{ta-ts}, a quasi-isomorphism $I: \Omega^\bullet(\mathfrak{H}^{(0)})^{\mathfrak{H}}\to \Omega^\bullet(\widehat{G}\rtimes \mathfrak{Q}^{(0)}, \call_c)^{\mathfrak{Q}}$ is introduced by the following formula
\[
I(\alpha)\big([\rho], q\big)=\sum_g\frac{1}{\dim(V_\rho)}\alpha(g,q)\tr\big( \rho(g){T^{[\rho]}_{q} }^{-1}\big),
\]
for $\alpha\in \Omega^\bullet(\mathfrak{H}^{(0)})^{\mathfrak{H}}$.  The operator $T^{[\rho]}_q: V_\rho \to V_{q([\rho])}$ is the intertwiner between the two representations $\rho\circ \Ad_{\sigma(q)}$ and $V_{q([\rho])}$ satisfying
\[
\rho\big(\Ad_{\sigma(q)}(g)\big)={T^{[\rho]}_q}^{-1}\circ q([\rho])(g)\circ T^{[\rho]}_q. 
\]
As $\Y$ is a banded $G$-gerbe over $\B$, $\Ad_{\sigma(q)}$ is the identity map, and $T^{[\rho]}_q$ can be chosen to be identity map on $V_\rho$. Furthermore, as $\rho$ is  irreducible and $G$ is abelian, $\dim(V_\rho)=1$. Hence, we have the following formula for $I$ in the case that $\Y$ is a banded $G$-gerbe,
\begin{equation}\label{eq:I}
I(\alpha)\big([\rho], q\big)=\sum_g \alpha(g,q)\tr\big( \rho(g)\big). 
\end{equation}

\begin{lemma}\label{lem:mapJ}
The quasi-inverse of the quasi-isomorphism $I: \Omega^\bullet(\mathfrak{H}^{(0)})^{\mathfrak{H}}\to \Omega^\bullet(\widehat{G}\rtimes \mathfrak{Q}^{(0)}, \call_c)^{\mathfrak{Q}}$ is of the form 
\[
J(\beta)(g,q)=\sum_{\rho\in \widehat{G}} \frac{1}{|G|} \pi^*\big(\beta([\rho],q)\big) \tr\big(\rho(g^{-1})\big),
\]
where $\pi: \mathfrak{H}^{(0)}\to \mathfrak{Q}^{(0)}$ is the natural forgetful map from the loop space 
$\mathfrak{H}^{(0)}$ of $\mathfrak{H}$ to the one $\mathfrak{Q}^{(0)}$ of $\mathfrak{Q}$, and $\beta\in \Omega^\bullet(\widehat{G}\rtimes \mathfrak{Q}^{(0)}, \call_c)^{\mathfrak{Q}}$. 
\end{lemma}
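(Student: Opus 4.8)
The plan is to show that $J$ is a two-sided inverse of $I$ already at the level of cochains; since $I$ is a cochain quasi-isomorphism, this immediately exhibits $J$ as its quasi-inverse. Throughout I use that $G$ is abelian, so every $[\rho]\in\widehat{G}$ is one-dimensional, $\tr(\rho(g))=\rho(g)$, $\rho(g^{-1})=\overline{\rho(g)}$, and $|\widehat{G}|=|G|$. The whole verification takes place fibrewise over $\mathfrak{Q}^{(0)}$, and the two inputs are the two orthogonality relations for the characters of the finite abelian group $G$.

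First I would check that $J$ actually lands in $\Omega^\bullet(\mathfrak{H}^{(0)})^{\mathfrak{H}}$; this is the step requiring care, and I expect it to be the main obstacle. Write $w(q,q_0):=\tau(q,q_0)\tau(qq_0,q^{-1})\tau(q,q^{-1})^{-1}\in G$ for the factor appearing in the inertia action, so that the arrow $(g,q)$ sends the loop $(g_0,q_0)$ to $(g_0\,w(q,q_0),\,qq_0q^{-1})$, using $gg_0g^{-1}=g_0$. By the very definition of the line bundle $\call_c$ in \eqref{eq:linegerbe}, the twisting of $\beta([\rho],-)$ along this arrow is multiplication by the scalar $\rho(w(q,q_0))=c([\rho],q,q_0)c([\rho],qq_0,q^{-1})c([\rho],q,q^{-1})^{-1}$. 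On the other hand, in $J(\beta)(g_0,q_0)$ the $\rho$-summand carries the factor $\overline{\rho(g_0)}$, and under $g_0\mapsto g_0\,w(q,q_0)$ this acquires the extra scalar $\overline{\rho(w(q,q_0))}=\rho(w(q,q_0))^{-1}$, which cancels exactly the twisting scalar $\rho(w(q,q_0))$ carried by $\beta$. Hence each $\rho$-summand, and therefore $J(\beta)$, is genuinely $\mathfrak{H}$-invariant and untwisted. Since the coefficients $\tfrac{1}{|G|}\overline{\rho(g)}$ are locally constant and $\pi^*$ commutes with $d$, the map $J$ also commutes with the de Rham differential, so it is a cochain map.

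It then remains to verify $I\circ J=\mathrm{id}$ and $J\circ I=\mathrm{id}$, which is pure Fourier inversion on $G$. Substituting the formula for $J$ into \eqref{eq:I} and collecting the sum over $g\in G$ produces the orthogonality relation $\sum_{g}\overline{\sigma(g)}\rho(g)=|G|\,\delta_{\sigma\rho}$, which collapses the $\sigma$-sum and yields $I(J(\beta))([\rho],q)=\beta([\rho],q)$, after identifying $\pi^*\beta$ on each slice $\{g\}\times\mathfrak{Q}^{(0)}$ with $\beta$ on $\mathfrak{Q}^{(0)}$. Symmetrically, substituting \eqref{eq:I} into $J$ and collecting the sum over $[\rho]\in\widehat{G}$ produces the dual relation $\sum_{\rho}\rho(h)\overline{\rho(g)}=|G|\,\delta_{hg}$, giving $J(I(\alpha))(g,q)=\alpha(g,q)$. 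Thus $J$ is a two-sided cochain inverse of $I$, and in particular its quasi-inverse, completing the proof.
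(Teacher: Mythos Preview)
Your proof is correct and follows essentially the same approach as the paper: verify that $J$ commutes with the de~Rham differential and then check $I\circ J=\mathrm{id}$ and $J\circ I=\mathrm{id}$ using the orthogonality relations for characters of the abelian group $G$. You additionally supply a careful verification that $J(\beta)$ is genuinely $\mathfrak{H}$-invariant (the cancellation between the twisting scalar $\rho(w(q,q_0))$ on $\beta$ and the factor $\overline{\rho(w(q,q_0))}$ coming from $g_0\mapsto g_0\,w(q,q_0)$), which the paper's proof leaves implicit.
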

\begin{proof} 
Observe that the map $\pi$ is a local diffeomorphism and $\tr(\rho(g^{-1}))$ is locally constant. It follows that the map $I^{-1}$ is compatible with the differential map, i.e.
\[
J\circ d=d\circ J. 
\] 
We compute $I\circ J (\beta)$ to be
\begin{eqnarray*}
&I\Big( \sum_{\rho'\in \widehat{G}} \frac{1}{|G|} \pi^*\big(\beta([\rho],q)\big) \tr\big(\rho'(g^{-1})\big)\Big)([\rho], q)\\
=&\sum_{g}\sum_{\rho'\in \widehat{G}} \tr\big(\rho(g)\big)\frac{1}{|G|} \beta([\rho'],q) \tr\big(\rho'(g^{-1})\big)\\
=&\frac{1}{|G|}\sum_{\rho'\in \widehat{G}}\beta([\rho'],q)\sum_{g}\tr\big(\rho(g)\big)\tr\big(\rho'(g^{-1})\big)\\
=&\beta([\rho],q),
\end{eqnarray*}
where in the last equation, we have used the orthogonality between the two characters of $G$ associated to $\rho$ and $\rho'$.  Similarly, we can compute that $J\circ I=id$. As $I$ is an isomorphism on cohomology, we can conclude that $J$ is the desired inverse of $I$. 
\end{proof}



\begin{remark} A similar argument extends Lemma \ref{lem:mapJ}  to general groups $G$. We leave them to the reader. 

\end{remark}

\subsection{Pushforward of virtual classes}
Let $\Y\to \B$ be a banded $G$-gerbe, with $G$ not necessarily abelian. We use $\mathfrak{l}$ (and $\mathfrak{k}$) to denote a conjugacy class of the stabilizer group of $\Y$ (and $\B$).  Using the conjugacy class $\mathfrak{l}$, we decompose the inertia stacks $I\Y$ and $I\B$ into unions of connected components as follows:
$$I\Y=\coprod_{\mathfrak{l}} \Y_\mathfrak{l}, \quad I\B=\coprod_{\mathfrak{k}}\B_{\mathfrak{k}}.$$
 The map $\pi: \Y\to \B$ induces a map $I\pi: I\Y\to I\B$. For an index $\mathfrak{l}$, define an index $\pi(\mathfrak{l})$ by the requirement that $I\pi$ maps $\Y_{\mathfrak{l}}$ to $\B_{\pi(\mathfrak{l})}$. 
 
 For $\beta\in H_2(\Y, \mathbb{Q})=H_2(\B, \mathbb{Q})$), we use $\Mbar_{g,n}(\Y, \beta)$ (and $\Mbar_{g,n}(\B, \beta)$) to denote the moduli space of $n$-pointed, genus $g$, stable maps to $\Y$ (and to $\B$) in the homology class $\beta$. The map $\pi$ induces a natural map on the corresponding moduli spaces,
\[
  p:\Mbar_{g,n}(\Y, \beta)\to \Mbar_{g,n}(\B,\beta). 
 \]
The $n$-marked points of a curve $\Sigma$ in $\Mbar_{g,n}(\Y, \beta)$ (and $\Mbar_{g,n}(\B,\beta)$) provide a natural evaluation map $ev_\Y$ (and $ev_\B$) to $I\Y^{\times n}$ (and $I\B^{\times n}$).  For $\mathfrak{l}_1, ..., \mathfrak{l}_n$ (and $\mathfrak{k}_1,...,\mathfrak{k}_n$),  we use $\Mbar_{g,n}(\Y, \beta; \mathfrak{l}_1, ..., \mathfrak{l}_n)$ (and $\Mbar_{g,n}(\B, \beta; \mathfrak{k}_1,..., \mathfrak{k}_n)$) to be the level set of the map $ev_\Y$ (and $ev_\B$) of $\Y_{\mathfrak{l}_1}\times ... \times \Y_{\mathfrak{l}_n}$ (and $\B_{\mathfrak{k}_1}\times ...\times \B_{\mathfrak{k}_n}$).

We consider the following commutative diagram between moduli spaces:
\begin{equation}\label{diag:Y_B}
\xymatrix{
\Mbar_{g,n}(\Y, \beta; \mathfrak{l}_1,...,\mathfrak{l}_n)\ar[d]_{p} \ar[r]^{ev_\Y} & \Y_{\mathfrak{l}_1}\times...\times\Y_{\mathfrak{l}_n}\ar[d]^{I\pi^{\times n}}\\
\Mbar_{g,n}(\B, \beta; \pi(\mathfrak{l}_1),...,\pi(\mathfrak{l}_n))\ar[r]_{ev_\B} &\B_{\pi(\mathfrak{l}_1)}\times...\times\B_{\pi(\mathfrak{l}_n)}.
}
\end{equation}

To prove the Eq. (\ref{eqn:decomp_GW_inv}), we will need the following relation between the virtual fundamental classes in Section \ref{subsec:vir_push}.
\begin{lemma}\label{lem:pushforward} 
There are  rational numbers $d_{\vec{\mathfrak{l}}, \mathfrak{c}}$ satisfying 
\begin{equation}\label{eqn:vir_push}
p_*[\Mbar_{g,n}(\Y, \beta; \mathfrak{l}_1,...,\mathfrak{l}_n)]^{vir}=\sum _{\mathfrak{c}}d_{\overrightarrow{\mathfrak{l}},\mathfrak{c}}[\Mbar_{g,n}(\B, \beta; \pi(\mathfrak{l}_1),...,\pi(\mathfrak{l}_n))_\mathfrak{c}]^{vir},
\end{equation}
where $\mathfrak{c}$ runs over all irreducible cycles of the virtual class $[\Mbar_{g,n}(\B, \beta; \pi(\mathfrak{l}_1),...,\pi(\mathfrak{l}_n))]^{vir}$. 
\end{lemma}
The description of the number $d_{\overrightarrow{\mathfrak{l}},\mathfrak{c}}$ is given in Section \ref{sec:degree}. The proof of Lemma \ref{lem:pushforward} is given in Section \ref{subsec:vir_push}.

\subsection{Proof of Theorem \ref{thm:main-abelian}}\label{subsec:pf_outline}
In this subsection, we present the proof of Theorem \ref{thm:main-abelian}.  In this subsection we assume that $G$ is abelian. Fix $\rho\in \widehat{G}$,  and denote the inner local system supported on the component $[\rho]\times \B\subset \widehat{\Y}$ by $\mathcal{L}_{c_\rho}$.  Let $I\B$ be the inertia orbifold associated to $\B$, and $\B_{\mathfrak{k}}\subset I\B$ be a component of $I\B$. For $j=1, \cdots, n$, consider $\varphi_j\in H^\bullet(\B_{\mathfrak{k}_j}, \mathcal{L}_{c_\rho} )$.  By Lemma \ref{lem:mapJ}, $J(\varphi_j)$ is a class  in $H^\bullet_{CR}(\Y):=H^\bullet\big(\Omega^\bullet(\mathfrak{H}^{(0)})^{\mathfrak{H}}\big)$ such that $I\big(J(\varphi_j)\big)=\varphi_j$. 

We have the following two lemmas about the trivialization $\theta_{c_\rho*}$ and degree $d_{\mathfrak{\vec{\mathfrak{l}}},\mathfrak{c}}$. 

\begin{lemma}\label{lem:hol} 
 Let $\Sigma\to \B$ be a point of $\Mbar_{g,n}(\B, \beta; \mathfrak{l}_1,...,\mathfrak{l}_n)$, and let $\Y_\Sigma\to \Sigma$ be the pull back of $\Y\to \B$ via $\Sigma\to \B$, and let $(g_1, ..., g_n)\in G^{\times n}$ such that $\big((g_1, q_1), ..., (g_n,q_n)\big)$ (and $(q_1, ..., q_n)$) are the generators of the stabilizer groups of $\Y_\Sigma$ over $\Sigma$ (and $\Sigma$) at the $n$-marked points. Let $\tau$ be a $Z(G)$-valued 2-cocycle representing the $G$-gerbe $\Y$ over $\mathcal{B}$. And the pullback of $\tau$ to $\Sigma$ is a $Z(G)$-valued 2-cocycle representing the $G$-gerbe $\Y_\Sigma$ over $\Sigma$.  Denote the holonomy of $\tau$ around the $i$-th marked point $z_i$ as is introduced in Definition \ref{dfn:holonomy-L} by $hol(\Y_\Sigma, \tau)(L_i, z_i)$ associated to $\Y_\Sigma$. When $G$ is abelian, 
\[
\prod_{j=1}^n \tr\big(\rho(g_j)\big)=\prod_{i=1}^n \rho\Big(\big(hol(\Y_\Sigma, \tau)(L_i, z_i)\big)^{-1}\Big).
\]
Furthermore, $\prod_{i=1}^n \rho\Big(\big(hol(\Y_\Sigma, \tau)(L_i, z_i)\big)^{-1}\Big)$ is the trivializing constant $\theta_{c_{\rho}}$ for the line bundle $ev_\B^*\big(\mathcal{L}_{c_\rho}^{\boxtimes n}\big)$ at $\Sigma$ in $\Mbar_{g,n}(\B, \pi(\beta); \pi(\mathfrak{l}_1),...,\pi(\mathfrak{l}_n))$. 
\end{lemma}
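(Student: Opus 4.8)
The plan is to establish the two assertions in turn: first the group-theoretic identity
$\prod_{j}\tr(\rho(g_j)) = \prod_i \rho((hol(\Y_\Sigma,\tau)(L_i,z_i))^{-1})$, and then the geometric identification of the right-hand side with Pryor's trivializing constant $\theta_{c_\rho}$. Since $G$ is abelian, every $\rho\in\widehat{G}$ is one-dimensional, so $\tr(\rho(g_j)) = \rho(g_j)$ and $\rho$ is a genuine character. First I would unwind Definition \ref{dfn:holonomy-L}: the holonomy $hol(\Y_\Sigma,\tau)(L_i,z_i)$ is computed in the groupoid presentation of $\Y_\Sigma\to\Sigma$ by transporting the chosen splitting $\sigma$ once around the loop $L_i$ encircling $z_i$, which amounts to accumulating the $2$-cocycle $\tau$ along the local isotropy generator $q_i$. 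Using the multiplication $(g,q)(g',q') = (gg'\tau(q,q'), qq')$ and the hypothesis that $(g_i,q_i)$ generates $\stab_{\Y_\Sigma}(z_i)$ over the generator $q_i$ of $\stab_{\Sigma}(z_i)$, a direct computation of the successive powers $(g_i,q_i)^s$ identifies this holonomy with the $G$-component of the stabilizer generator, namely $g_i^{-1}$, up to the normalization fixed in Definition \ref{dfn:holonomy-L}. Applying the character $\rho$ and multiplying over the marked points then gives the first equality at once.

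For the second assertion I would invoke Pryor's construction \cite{pry} of $\sL_{c_\rho}\to I\B$ as the holonomy line bundle of the flat $U(1)$-gerbe $c_\rho = \rho\circ\tau$, together with the canonical trivialization $\theta\colon \otimes_i ev_i^*\sL_{c_\rho}\to \underline{\mathbb{C}}$ on $\Mbar_{g,n}(\B,d)$. The fibre of $ev_i^*\sL_{c_\rho}$ at $\Sigma$ is the holonomy line of $c_\rho$ attached to the sector $\B_{\pi(\mathfrak{l}_i)}$; by the description of the holonomy of $c_\rho$ recorded in \eqref{eq:linegerbe}, the gluing datum defining $\theta$ at $\Sigma$ is exactly the product of the scalars $\rho((hol(\Y_\Sigma,\tau)(L_i,z_i))^{-1})$. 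The compatibility of these local holonomies with a global trivialization over the closed orbifold curve $\Sigma$ is Pryor's gluing statement, now transported through the substitution $c_\rho = \rho\circ\tau$; this yields $\theta_{c_\rho} = \prod_i \rho((hol(\Y_\Sigma,\tau)(L_i,z_i))^{-1})$ and completes the argument.

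The hard part will be reconciling the two appearances of the holonomy: the group-theoretic one, read off from the stabilizer generator $(g_i,q_i)$ of the pulled-back gerbe $\Y_\Sigma$, and the geometric one underlying Pryor's trivialization of $\sL_{c_\rho}$. Care is needed to track the inverse and the cocycle corrections $\tau(q_i^s,q_i)$ that accumulate in the powers $(g_i,q_i)^s$, and to verify that the normalization of Definition \ref{dfn:holonomy-L} is compatible both with the $U(1)$-cocycle conventions of \eqref{eq:linegerbe} and with the sign conventions in the trivialization $\theta$ of \cite{pry}. Once these conventions are aligned, each equality reduces to the multiplicativity of the character $\rho$ together with the defining property of the holonomy.
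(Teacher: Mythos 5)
Your treatment of the second assertion is fine and matches the paper: the trivializing constant of \cite{pry} for $ev_\B^*(\mathcal{L}_{c_\rho}^{\boxtimes n})$ is $\prod_i \big(hol(\Y_\Sigma,c_\rho)(L_i,z_i)\big)^{-1}$, and since $c_\rho=\rho\circ\tau$ and $\rho$ is a homomorphism this equals $\prod_i\rho\big(hol(\Y_\Sigma,\tau)(L_i,z_i)^{-1}\big)$. The problem is your first step. You claim that a local computation of the powers $(g_i,q_i)^s$ identifies $hol(\Y_\Sigma,\tau)(L_i,z_i)$ with $g_i^{-1}$ termwise, and then you multiply over $i$. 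This conflates two different holonomies and asserts a pointwise identity that is false in general. The quantity you would actually compute from the powers of $(g_i,q_i)$ is $hol(\Y_\Sigma,\tau)(x_i)$ of Definition \ref{dfn:holonomy} (accumulation of $\tau$ along the isotropy generator), which enters only through the order constraint $g_i^{m_id_i}\,hol(\Y_\Sigma,\tau)(x_i)^{m_i}=1$; the quantity appearing in the lemma is $hol(\Y_\Sigma,\tau)(L_i,z_i)$ of Definition \ref{dfn:holonomy-L}, the holonomy of the cocycle around the boundary loop $L_i$, built from $\tau$ evaluated on the arrows $h_{j,s,s+1}$ and $\tilde g_{s,j,M}$ covering $\partial D_i$. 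These do not coincide, and $g_i$ is not determined by either of them: by Lemma \ref{lem:F(L)} one has $F(L_i)=hol(\Y_\Sigma,\tau)(L_i,z_i)^{-1}g_i^{-1}$, where $F(L_i)$ is the monodromy of the flat $G$-bundle on the punctured curve around $z_i$, and your termwise identity would force $F(L_i)=1$ for every $i$, i.e.\ triviality of that bundle near every puncture.

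The identity in the lemma is genuinely global: it is the image under $\rho$ of the surface-group relation established in Theorem \ref{thm:degree}, namely
\begin{equation*}
[x_1,y_1]\cdots[x_g,y_g]\;=\;\Big(\prod_{i=1}^n hol(\Y_\Sigma,\tau)(L_i,z_i)\Big)\,g_1\cdots g_n ,
\end{equation*}
which encodes the existence of the lifting $\widetilde\Sigma\to p^*\Y_\Sigma$ and follows from the presentation $[x_1,y_1]\cdots[x_g,y_g]L_1\cdots L_n=1$ of $\pi_1(\Sigma^*)$ together with $F(L_i)=hol(L_i,z_i)^{-1}g_i^{-1}$. Applying the one-dimensional character $\rho$ kills the commutators on the left and yields $\rho(c)\prod_j\rho(g_j)=1$, i.e.\ exactly the product identity of the lemma. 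Without this global input (or at least the relation of Lemma \ref{lem:F(L)} combined with $\prod F(L_i)=1$ after killing commutators), the first equality cannot be derived, so as written your argument has a genuine gap.
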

\begin{proof}
According to Theorem \ref{thm:degree}, the holonomy $\mathfrak{c}=\prod_{i=1}^n hol(\Y_\Sigma, \tau)(L_i, z_i)$ and the local stabilizer data $g_i$ satisfy
\[
[x_1, y_1]\cdots [x_g, y_g]=\mathfrak{c} g_1\cdots g_n. 
\]
As $\rho\in \widehat{G}$ is $1$-dimensional, $\rho([x_j, y_j])=1$ for $j=1, \cdots, g$, and we have 
\[
\rho_\mathfrak{c}\left(\prod_{j=1}^n \chi_\rho(g_j)\right)=1. 
\] 

The trivializing constant for the line bundle $ev_\B^*\big(\mathcal{L}_{c_\rho}^{\boxtimes n}\big)$ is equal to 
\[
\prod_{i=1}^n \big(hol(\Y_\Sigma, c_\rho)(L_i, z_i)\big)^{-1}, 
\]
where $c_\rho$ is the line gerbe on $\B$ defined by $\rho(\tau)$. Using the properties that $G$ is abelian, and that $\rho$ is a group homomorphism, we can directly check
\[
\prod_{i=1}^n \big(hol(\Y_\Sigma, c_\rho)(L_i, z_i)\big)^{-1}=\prod_{i=1}^n \rho\Big(\big(hol(\Y_\Sigma, \tau)(L_i, z_i)\big)^{-1}\Big),
\]
which proves the desired statement. 
\end{proof}

\begin{lemma}\label{lem:degree} When $G$ is abelian, $d_{\vec{\mathfrak{l}},\mathfrak{c}}$ is computed to be 
\[
\left(\frac{1}{|G|}\right)^{1-2g}\#\{(g_1)\in \mathsf{l}_1, \cdots, (g_n)\in \mathsf{l}_n, g_1\cdots g_n \mathfrak{c}=1\},
\]
where $\mathsf{l}_1, ..., \mathsf{l}_n$ are collections of conjugacy classes of $G$ introduced in Lemma \ref{lem:conjugacy}.
\end{lemma}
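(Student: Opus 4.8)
The plan is to compute $d_{\vec{\mathfrak{l}}}$ as the stacky degree of the forgetful map $p$ over the chosen components, realized as a groupoid-weighted count of lifts. Fix a generic stable map $\bar f\colon \Sigma\to\B$ in $\Mbar_{g,n}(\B,\beta;\pi(\mathfrak{l}_1),\dots,\pi(\mathfrak{l}_n))$, with orbifold source curve $\Sigma$. Since $p$ is induced by composition with $\Y\to\B$, the fiber of $p$ over $[\bar f]$ is the groupoid of lifts $f\colon \Sigma\to\Y$ of $\bar f$ whose marked-point evaluations land in $\Y_{\mathfrak{l}_1},\dots,\Y_{\mathfrak{l}_n}$. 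By the discussion of Section \ref{sec:degree}, the rational number $d_{\vec{\mathfrak{l}}}$ in Lemma \ref{lem:pushforward} is the groupoid cardinality of this fiber, namely
\[
d_{\vec{\mathfrak{l}}}=\sum_{[f]}\frac{|\operatorname{Aut}(\bar f)|}{|\operatorname{Aut}(f)|},
\]
the sum running over isomorphism classes of such lifts. I would identify the number of lifts and the automorphism contribution separately, specializing the general degree analysis of Section \ref{sec:degree} to the abelian band, where conjugation and the intertwiners $T$ are trivial.

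For the count of lifts, I would describe a lift $f$ as a section of the pulled-back banded $G$-gerbe $\Y_\Sigma:=\Sigma\times_\B\Y\to\Sigma$. Isomorphism classes of such sections with prescribed isotropy behaviour at the marked points are governed by the monodromy relation of Theorem \ref{thm:degree}: writing $a_j,b_j$ for the images of the genus generators and $g_i$ for the image of the loop around the $i$-th marked point, a lift corresponds to a tuple satisfying
\[
[a_1,b_1]\cdots[a_g,b_g]=c\, g_1\cdots g_n,
\]
where $c$ is the holonomy of $\tau$ and $(g_i)$ ranges over the conjugacy data $\mathsf{l}_i$ supplied by Lemma \ref{lem:conjugacy}. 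Because $G$ is abelian, every commutator $[a_j,b_j]$ is trivial, so the genus generators $a_j,b_j$ are completely unconstrained---contributing a free factor of $|G|^{2g}$---and the relation collapses to $g_1\cdots g_n c=1$. Hence the number of rigidified monodromy tuples is $|G|^{2g}\cdot N$, where $N=\#\{(g_1)\in\mathsf{l}_1,\dots,(g_n)\in\mathsf{l}_n:\ g_1\cdots g_n c=1\}$.

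It remains to extract the automorphism factor. Each lift has the same source $\Sigma$ as $\bar f$, and its automorphisms relative to $\bar f$ are the global sections of the band of $\Y_\Sigma$, namely $H^0(\Sigma,\underline{G})=G$ since $\Sigma$ is connected and the gerbe is banded. A standard groupoid-cardinality computation then shows that the weighted count equals the number of rigidified monodromy tuples divided by $|G|$: in the orbit--stabilizer bookkeeping the action of $\operatorname{Aut}(\bar f)$ on tuples cancels against the stabilizer factors in $|\operatorname{Aut}(f)|$, leaving only the relative band contribution $1/|G|$. Therefore
\[
d_{\vec{\mathfrak{l}}}=\frac{|G|^{2g}\,N}{|G|}=\left(\frac{1}{|G|}\right)^{1-2g}N,
\]
which is the asserted formula.

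The step I expect to be the main obstacle is precisely this last bookkeeping that turns the stacky degree of $p$ into the clean product (free genus monodromy)$\times$(constrained marked-point monodromy)$\div$(band automorphisms). Concretely, I must verify that the relative automorphism group of each lift is exactly $G$ and acts as automorphisms---rather than identifications---of lifts, so that the groupoid cardinality of the fiber is $|G|^{2g}N/|G|$ independently of how $\operatorname{Aut}(\bar f)$ permutes the tuples; and I must check that the torsion conditions $g_i^{r_i}=1$ at the orbifold points are exactly what the classes $\mathsf{l}_i$ of Lemma \ref{lem:conjugacy} encode, so that the constrained count is precisely $N$. Both points reduce to the compatibility of the monodromy description of Theorem \ref{thm:degree} with the virtual pushforward of Lemma \ref{lem:pushforward}, specialized to the abelian band where conjugation and intertwiners are trivial.
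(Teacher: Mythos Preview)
Your argument is correct and reaches the same formula, but it takes a genuinely different route from the paper. The paper's proof of Lemma~\ref{lem:degree} starts from the character formula for $d_{\vec{\mathfrak{l}}}$ given in Theorem~\ref{thm:char-formula} (Eq.~(\ref{eq:degree})), then uses that for abelian $G$ every irreducible representation is one-dimensional and every conjugacy class is a singleton; the inner sum $\sum_{\alpha\in\widehat G}\alpha_c\prod_j\chi_\alpha(g_j)=\sum_\alpha\alpha(cg_1\cdots g_n)$ collapses via character orthogonality to $|G|$ on the locus $cg_1\cdots g_n=1$ and to $0$ elsewhere, yielding $(1/|G|)^{2-2g}\cdot|G|\cdot N$. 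You instead bypass the character formula entirely and go back to the raw monodromy description of lifts (Theorem~\ref{thm:degree} and Proposition~\ref{prop:local-holonomy}): for abelian $G$ the commutators in $\chi^G_{g,c}$ vanish, the genus generators are unconstrained, and the groupoid count $\Omega^G_{g,c}=|\chi^G_{g,c}|/|G|$ gives $|G|^{2g}N/|G|$ directly. Your approach is more elementary in that it never invokes the representation-theoretic identity of Appendix~\ref{app:counting_result}; the paper's approach has the advantage that it makes transparent how the abelian case is a specialization of the general character formula, which is the form needed elsewhere in the argument. The bookkeeping you flag as a potential obstacle---that the relative automorphism group of a lift is exactly the band $G$ and that the torsion conditions at marked points are absorbed into the $\mathsf{l}_i$---is handled in the paper by Proposition~\ref{prop:local-holonomy} and the identification of $d_{\vec{\mathfrak{l}}}$ with $\sum_{(g_i)\in\mathsf{l}_i}\Omega^G_{g,c}$ in Theorem~\ref{thm:char-formula}, so you may simply cite those rather than redo the orbit--stabilizer analysis.
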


\begin{proof}
By Eq. (\ref{eq:degree}) in Theorem \ref{thm:char-formula}, we compute the degree $d_{\vec{\mathfrak{l}}, \mathfrak{c}}$ by 
\[
d_{\vec{\mathfrak{l}}, \mathfrak{c}}=\sum_{(g_1)\in \mathsf{l}_1, \cdots, (g_n)\in \mathsf{l}_n}\sum_{\alpha \in \widehat{G}}\frac{1}{\dim(V_\alpha)^n}\left(\frac{\text{dim}\, V_\alpha}{|G|}\right)^{2-2g}\alpha_\mathfrak{c}\left(\sum_{g^\bullet_1\in (g_1),...,g^\bullet_n\in (g_n)}\prod_{j=1}^n \chi_\alpha(g^\bullet_j)\right),
\]
where $\mathfrak{c}$ is  $\prod_{i=1}^n hol(\Y_\Sigma, \tau)(L_i, z_i)$. 

When $G$ is abelian, all $\dim(V_\alpha)=1$, and we can simplify the above expression to be
\[
\begin{split}
d_{\vec{\mathfrak{l}}, \mathfrak{c}}=&\sum_{(g_1)\in \mathsf{l}_1, \cdots, (g_n)\in \mathsf{l}_n}\sum_{\alpha \in \widehat{G}}\left(\frac{1}{|G|}\right)^{2-2g}\alpha_\mathfrak{c}\left(\sum_{g^\bullet_1\in (g_1),...,g^\bullet_n\in (g_n)}\prod_{j=1}^n \chi_\alpha(g^\bullet_j)\right)\\
=&\left(\frac{1}{|G|}\right)^{2-2g}\sum_{(g_1)\in \mathsf{l}_1, \cdots, (g_n)\in \mathsf{l}_n}\sum_{\alpha \in \widehat{G}}\alpha_\mathfrak{c}\left(\sum_{g^\bullet_1\in (g_1),...,g^\bullet_n\in (g_n)}\prod_{j=1}^n \chi_\alpha(g^\bullet_j)\right).
\end{split}
\]
As $G$ is abelian, each conjugacy class of $G$ contains only one element. Therefore, we can further simplify the expression of $d_{\vec{\mathfrak{l}},\mathfrak{c}}$ to be
\[
\left(\frac{1}{|G|}\right)^{2-2g}\sum_{(g_1)\in \mathsf{l}_1, \cdots, (g_n)\in \mathsf{l}_n}\sum_{\alpha \in \widehat{G}}\alpha_\mathfrak{c}\left(\prod_{j=1}^n \chi_\alpha(g^\bullet_j)\right). 
\]

By the same argument as Lemma \ref{lem:hol}, we have 
\[
\alpha_\mathfrak{c}\left(\prod_{j=1}^n \chi_\alpha(g^\bullet_j)\right)=1. 
\] 
Therefore, we can simplify the expression of $d_{\vec{\mathfrak{l}},\mathfrak{c}}$ to be
\[
\left(\frac{1}{|G|}\right)^{2-2g}\sum_{\begin{array}{c}(g_1)\in \mathsf{l}_1, \cdots, (g_n)\in \mathsf{l}_n, \\g_1\cdots g_n \mathfrak{c}=1\end{array}}\sum_{\alpha \in \widehat{G}} 1.
\]
Using the fact that $|\widehat{G}|=|G|$, we have 
\[
d_{\vec{\mathfrak{l}}, \mathfrak{c}}=\left(\frac{1}{|G|}\right)^{1-2g}\sum_{\begin{array}{c}(g_1)\in \mathsf{l}_1, \cdots, (g_n)\in \mathsf{l}_n, \\g_1\cdots g_n \mathfrak{c}=1\end{array}} 1, 
\]
which gives the desired expression for $d_{\vec{\mathfrak{l}}, \mathfrak{c}}$. 
\end{proof}

By definition, we can write the left hand side of Eq. (\ref{eqn:decomp_GW_inv}) as 

\begin{equation*}
\<\prod_{j=1}^n J(\varphi_j) \psi_j^{a_j}\>_{g,n,\beta}^\Y=\sum_{\mathfrak{l}_1,...,\mathfrak{l}_n}\text{pt}_*\Big(ev_\Y^*\big(\bigotimes_{j=1}^nJ(\varphi_j)\big)(\prod_{j=1}^n\psi_j^{a_j})\cap [\Mbar_{g,n}\big(\Y, \beta; \mathfrak{l}_1,...,\mathfrak{l}_n\big)]^{vir}\Big)
\end{equation*}
where $\text{pt}_*$ stands for pushing forward to a point.   Fix $\mathfrak{l}_1, ..., \mathfrak{l}_n$. By  Lemma \ref{lem:mapJ}, we may compute the left side of the above equation as follows: 
\[
\begin{split}
&\text{pt}_*\Big(ev_\Y^*\big(\bigotimes_{j=1}^nJ(\varphi_j)\big)(\prod_{j=1}^n\psi_j^{a_j})\cap [\Mbar_{g,n}\big(\Y, \beta; \mathfrak{l}_1,...,\mathfrak{l}_n\big)]^{vir}\Big)\\
=&\text{pt}_*\Big( ev_\Y^* \big( \frac{1}{|G|} \tr\big(\rho(g_j)\big)I\pi^*\big(\varphi_j([\rho], q_j)\big)\big) (\prod_{j=1}^n \psi_j^{a_j})\\
&\qquad\qquad\qquad\qquad\qquad\qquad\qquad\qquad\qquad \cap [\Mbar_{g,n}\big(\Y, \beta; \mathfrak{l}_1,...,\mathfrak{l}_n\big)]^{vir} \Big)\\
=&\text{pt}_*\Big(\Big(\frac{1}{|G|}\Big)^n\prod_{j=1}^n \tr\big(\rho(g_j)\big)  ev_\Y^* \big( \bigotimes_{j=1}^n I\pi^*\big(\varphi_j([\rho], q_j)\big)\big) (\prod_{j=1}^n \psi_j^{a_j})\\
&\qquad \qquad \qquad \qquad \qquad \qquad \qquad  \qquad\qquad\cap [\Mbar_{g,n}\big(\Y, \beta; \mathfrak{l}_1,...,\mathfrak{l}_n\big)]^{vir} \Big)
\end{split}.
\]
By the diagram (\ref{diag:Y_B}), we have 
\[
ev_\Y^*\circ (I\pi^{\times n})^*= p^* \circ ev_\B^* .
\]
And we can continue the above computation by 
\[
\begin{split}
=& \text{pt}_*\Big( \Big(\frac{1}{|G|}\Big)^np^*\big( ev_{\B}^* \big( \prod_{j=1}^n \tr\big(\rho(g_j)\big) \bigotimes_{j=1}^n \varphi_j([\rho], q_j)\big) \big) (\prod_{j=1}^n \psi_j^{a_j})\\
&\qquad \qquad \qquad \qquad \qquad \qquad \cap [\Mbar_{g,n}\big(\Y, \beta; \mathfrak{l}_1,...,\mathfrak{l}_n\big)]^{vir} \Big)\\
=& \text{pt}_*\Big( \Big(\frac{1}{|G|}\Big)^n p^*\big( ev_{\B}^*\big( \prod_{j=1}^n \tr\big(\rho(g_j)\big)\bigotimes_{j=1}^n \varphi_j([\rho], q_j)\big)\wedge (\prod_{j=1}^n \psi_j^{a_j}) \big)\\
&\qquad \qquad \qquad \qquad \qquad \qquad \cap [\Mbar_{g,n}\big(\Y, \beta; \mathfrak{l}_1,...,\mathfrak{l}_n\big)]^{vir} \Big),
\end{split}
\]
where in the last equation, we have used the functorial property of the descendants $\psi_j$.

By the pushforward of integral, we continue the above computation by
\[
\begin{split}
=&\text{pt}_*\Big(  \Big(\frac{1}{|G|}\Big)^n\prod_{j=1}^n \tr\big(\rho(g_j)\big) ev_{\B}^*\big( \bigotimes_{j=1}^n \varphi_j([\rho], q_j)\big)(\prod_{j=1}^n \psi_j^{a_j}) \\
&\qquad \qquad \qquad \qquad \qquad \qquad \cap p_*[\Mbar_{g,n}\big(\Y, \beta; \pi(\mathfrak{l}_1),...,\pi(\mathfrak{l}_n)\big)]^{vir} \Big).
\end{split}
\]

We apply Eq. (\ref{eqn:vir_push}) in Lemma \ref{lem:pushforward} to compute the above expression. 
\begin{equation}\label{eq:abc}
\begin{split}
=& \text{pt}_*\Big(  \Big(\frac{1}{|G|}\Big)^n\prod_{j=1}^n \tr\big(\rho(g_j)\big) ev_{\B}^*\big( \bigotimes_{j=1}^n \varphi_j([\rho], q_j)\big)(\prod_{j=1}^n \psi_j^{a_j})\\
&\qquad \qquad \qquad \qquad \qquad \qquad \cap \sum_\mathfrak{c} d_{\vec{\mathfrak{l}}, \mathfrak{c}}[\Mbar_{g,n}\big(\B, \beta; \pi(\mathfrak{l}_1),...,\pi(\mathfrak{l}_n)\big)_\mathfrak{c}]^{vir} \Big)\\
=&\sum_{\mathfrak{c}} \text{pt}_* \Big( d_{\vec{\mathfrak{l}}, \mathfrak{c}}\Big(\frac{1}{|G|}\Big)^n\prod_{j=1}^n \tr\big(\rho(g_j)\big) ev_{\B}^*\big( \bigotimes_{j=1}^n \varphi_j([\rho], q_j)\big)(\prod_{j=1}^n \psi_j^{a_j}) \\
&\qquad \qquad \qquad \qquad \qquad \qquad \cap[\Mbar_{g,n}\big(\B, \beta; \pi(\mathfrak{l}_1),...,\pi(\mathfrak{l}_n)\big)_\mathfrak{c}]^{vir} \Big).
\end{split}
\end{equation}

Using Lemma \ref{lem:hol} and \ref{lem:degree}, we can write 
\begin{equation}\label{eq:hol-deg}
\begin{split}
&d_{\vec{\mathfrak{l}},\mathfrak{c}}\Big(\frac{1}{|G|}\Big)^n\prod_{j=1}^n \tr\big(\rho(g_j)\big) ev_{\B}^*\big( \bigotimes_{j=1}^n \varphi_j([\rho], q_j)\big)\\
=&\left(\frac{1}{|G|}\right)^{n+1-2g}\#\{(g_1)\in \mathsf{l}_1, \cdots, (g_n)\in \mathsf{l}_n, g_1\cdots g_n \mathfrak{c}=1\} \theta_{c_\rho*}\Big(ev_\B^*\big( \bigotimes_{j=1}^n \varphi_j([\rho], q_j)\big)\Big).
\end{split}
\end{equation}
Substituting Eq. (\ref{eq:hol-deg})  back to Eq. (\ref{eq:abc}), we have 
\begin{equation}\label{eq:sum}
\begin{split}
&\sum_{\mathfrak{c}}\text{pt}_* \Big(d_{\vec{\mathfrak{l}}, \mathfrak{c}}\Big(\frac{1}{|G|}\Big)^n\prod_{j=1}^n \tr\big(\rho(g_j)\big) ev_{\B}^*\big( \bigotimes_{j=1}^n \varphi_j([\rho], q_j)\big)(\prod_{j=1}^n \psi_j^{a_j})\\
&\qquad\qquad\qquad\qquad \qquad \qquad \cap[\Mbar_{g,n}\big(\B, \beta; \pi(\mathfrak{l}_1),...,\pi(\mathfrak{l}_n)\big)_\mathfrak{c}]^{vir} \Big)\\
=&\left(\frac{1}{|G|}\right)^{n+1-2g} \sum_{\mathfrak{c}} \#\{(g_1)\in \mathsf{l}_1, \cdots, (g_n)\in \mathsf{l}_n, g_1\cdots g_n \mathfrak{c}=1\} 
\\
&\qquad \qquad \text{pt}_* \left\{\theta_{c_\rho*}\Big(ev_\B^*\big( \bigotimes_{j=1}^n \varphi_j([\rho], q_j)\big)\Big)(\prod_{j=1}^n \psi_j^{a_j}) \cap[\Mbar_{g,n}\big(\B, \beta; \pi(\mathfrak{l}_1),...,\pi(\mathfrak{l}_n)\big)_{\mathfrak{c}}]^{vir} \Big)\right\}.
\end{split}
\end{equation}
Taking the sum over all possible $\vec{\mathfrak{l}}$ with fixed $\pi(\mathfrak{l}_1),...,\pi(\mathfrak{l}_n)$,  we have
\[
 \sum_{\vec{\mathfrak{l}}} \#\{(g_1)\in \mathsf{l}_1, \cdots, (g_n)\in \mathsf{l}_n, g_1\cdots g_n c=1\} =\#\{g_1\cdots g_n c=1\}=|G|^{n-1}. 
\]
We summarize the above computation to the following equality.
\begin{equation*}
\begin{split}
\<\prod_{j=1}^n J(\varphi_j) \psi_j^{a_j}\>_{g,n,\beta}^\Y&=\sum_{\mathfrak{l}_1,...,\mathfrak{l}_n} \text{pt}_*\Big(ev_\Y^*\big(\bigotimes_{j=1}^nJ(\varphi_j)\big)(\prod_{j=1}^n\psi_j^{a_j})\cap [\Mbar_{g,n}\big(\Y, \beta; \mathfrak{l}_1,...,\mathfrak{l}_n\big)]^{vir}\Big)\\
&=\left(\frac{1}{|G|}\right)^{2-2g}\sum_{\mathfrak{c}}\text{pt}_* \Big\{\theta_{c_\rho*}\Big(ev_\B^*\big( \bigotimes_{j=1}^n \varphi_j([\rho], q_j)\big)\Big)(\prod_{j=1}^n \psi_j^{a_j}) \\
&\qquad \qquad \qquad \cap[\Mbar_{g,n}\big(\B, \beta; \pi(\mathfrak{l}_1),...,\pi(\mathfrak{l}_n)\big)_{\mathfrak{c}}]^{vir} \Big)\Big\}\\
&=\left(\frac{1}{|G|}\right)^{2-2g}\text{pt}_* \Big\{\theta_{c_\rho*}\Big(ev_\B^*\big( \bigotimes_{j=1}^n \varphi_j([\rho], q_j)\big)\Big)(\prod_{j=1}^n \psi_j^{a_j}) \\
&\qquad \qquad \qquad \cap\big(\sum_{\mathfrak{c}} [\Mbar_{g,n}\big(\B, \beta; \pi(\mathfrak{l}_1),...,\pi(\mathfrak{l}_n)\big)_{\mathfrak{c}}]^{vir}\big) \Big)\Big\}\\
&=\left(\frac{1}{|G|}\right)^{2-2g}\<\prod_{j=1}^n \varphi_j \psi_j^{a_j}\>_{g,n,\beta}^{\B, c_\rho},
\end{split}
\end{equation*}
which completes the proof of Theorem \ref{thm:main-abelian}. 

\section{Proof of Theorem \ref{thm:product}}\label{sec:pf_thm_prod}

Let $\B$ be a smooth proper Deligne-Mumford stack over $\mathbb{C}$ represented by a proper \'etale groupoid $\mathfrak{Q}\rightrightarrows M$, and $G$ a finite group. We have the following diagram
\begin{equation*}
\xymatrix{
\B\times BG\ar[d]_{\pi_1}\ar[r]^{\pi_2} & BG\\
\B. &
}
\end{equation*}
Let $c_1$ be a flat $U(1)$-gerbe on $\B$ and $c_2$ a flat $U(1)$-gerbe on $BG$ represented by a $U(1)$-valued 2-cocycle on $\B$ and $G$. Define $c_1\boxtimes c_2:=\pi_1^*c_1\otimes \pi_2^* c_2$ to be a flat $U(1)$-gerbe on $\B\times BG$. The purpose of this section is to study the $c_1\boxtimes c_2$-twisted Gromov-Witten theory of $\B\times BG$. 

Given a stable map $f: \C\to \B\times BG$, stabilizations of the compositions $\pi_1\circ f$ and $\pi_2\circ f$ yield stable maps $f_1: \C_1\to \B$ and $f_2: \C_2\to BG$. This construction gives a  map between moduli spaces of stable maps:
\begin{equation}\label{eqn:map_modulispaces}
p: \Mbar_{g,n}(\B\times BG, \beta)\to \Mbar_{g,n}(\B, \beta)\times \Mbar_{g,n}(BG).
\end{equation}

Let $\sL_{c_1}\to I\B$, $\sL_{c_2}\to IBG$, and $\sL_{c_1\boxtimes c_2}\to I(\B\times BG)$ be the inner local systems associated to $c_1$, $c_2$, and $c_1\boxtimes c_2$ respectively, as constructed in \cite{pry} and recalled in (\ref{eq:linegerbe}). The definition of gerbe-twisted Gromov-Witten invariants in \cite{pry} involves trivializations of certain line bundles. More precisely, there are trivializations
\begin{equation}
\begin{cases}
\theta_{c_1}: \otimes_{i=1}^n ev_i^*\sL_{c_1}\to \underline{\mathbb{C}} \quad \text{ on } \Mbar_{g,n}(\B, \beta)\\
\theta_{c_2}: \otimes_{i=1}^n ev_i^*\sL_{c_2}\to \underline{\mathbb{C}} \quad \text{ on } \Mbar_{g,n}(BG)\\
\theta_{c_1\boxtimes c_2}: \otimes_{i=1}^n ev_i^*\sL_{c_1\boxtimes c_2}\to \underline{\mathbb{C}} \quad \text{ on } \Mbar_{g,n}(\B\times BG, \beta).
\end{cases}
\end{equation}
Using the constructions of these trivializations in \cite{pry}, it is straightforward to check that the following diagram is commutative:
\begin{equation}\label{diagram:prod_triv}
\xymatrix{
\otimes_{i=1}^n ev_i^*\sL_{c_1\boxtimes c_2}\ar[r]^{\,\,\,\,\,\,\,\, \theta_{c_1\boxtimes c_2}}  &{\mathbb{C}}\\ 
p^*\left((\otimes_{i=1}^n ev_i^*\sL_{c_1})\boxtimes (\otimes_{i=1}^n ev_i^*\sL_{c_2}) \right)\ar[u]^{\simeq}\ar[ur]_{\theta_{c_1}\boxtimes \theta_{c_2}} &
}.
\end{equation}
Let $\alpha_1,...,\alpha_n\in H^\bullet(I\B, \mathcal{L}_{c_1})$, $a_1,...,a_n\in \mathbb{Z}_{\geq 0}$. Let $C^*( G, c_2)$ be the $c_2$-twisted group algebra which is spanned by group elements of $G$ with the following product rule, 
\[
g_1 \circ g_2:=c_2(g_1, g_2)g_1g_2. 
\]
Let $Z(C^*(G, c_2))$ be the center of the twisted group algebra $C^*(G, c_2)$. Consider a conjugacy class $(g)$ of $G$. Let $1_{(g)}$ be the corresponding element in $C^*(G, c_2)$ of the form 
\[
1_{(g)}=\sum_{g'\in (g)}g'. 
\] 
It is not hard to check that $1_{(g)}$ is in the center $Z(C^*(G, c_2))$ of $C^*(G, c_2)$, if and only if $c_2(g, g')=c_2(g',g)$ for any $g'\in G$ commuting with $g$. A conjugacy class $(g)$ with such a property is called $c_2$-{\em regular}.  It is explained \cite[Examples 6.4]{ru1} that $Z(C^*(G, c_2))$ has an additive basis consisting of $1_{(g)}$ that $(g)$ is $c_2$-regular. Consider $(g_1),...,(g_n)\subset G$ such that $1_{(g_i)}$ is in the center of $Z(C^*(G, c_2))$ for $i=1,...,n$. The $c_1\boxtimes c_2$-twisted Gromov-Witten invariants of $\B\times BG$, 
\begin{equation}\label{eqn:gw_c1c2}
\<\prod_{i=1}^n \tau_{a_i}(\alpha_i\otimes 1_{(g_i)})\>_{g,n,\beta}^{\B\times BG, c_1\boxtimes c_2}:=\int_{[\Mbar_{g,n}(\B\times BG, \beta)]^{vir}} \left(\theta_{c_1\boxtimes c_2}\right)_* \left(\prod_{i=1}^n ev_i^*(\alpha_i\otimes 1_{(g_i)}) \right)\left(\prod_{i=1}^n\psi_i^{a_i} \right),
\end{equation} 
may be evaluated as follows. 
\begin{proposition}
\begin{equation}\label{eqn:gw_c1c2_2}
\begin{split}
&\<\prod_{i=1}^n \tau_{a_i}(\alpha_i\otimes 1_{(g_i)})\>_{g,n,\beta}^{\B\times BG, c_1\boxtimes c_2}\\
=&\<\prod_{i=1}^n\tau_{a_i}(\alpha_i)\>_{g,n,\beta}^{\B, c_1}\times \Lambda_{g,n}^{BG, c_2}(1_{(g_1)},..., 1_{(g_n)}),
\end{split}
\end{equation}  
where $\Lambda_{g,n}^{BG, c_2}$ is defined and determined in Proposition \ref{prop:cohft} in Appendix \ref{app:twisted_bgamma}. 
\end{proposition}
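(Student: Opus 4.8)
The plan is to push the integral defining the left-hand side of (\ref{eqn:gw_c1c2_2}) down to $\Mbar_{g,n}(\B,d)$ along the composition $q:=\mathrm{pr}_1\circ p$, where $p$ is the map (\ref{eqn:map_modulispaces}), and to show that the $BG$-directions integrate to the constant $\Lambda_{g,n}^{BG,c_2}$. First I would rewrite the integrand. Since $I(\B\times BG)=I\B\times IBG$ and the insertions are external products, the evaluation classes split as $ev_i^*(\alpha_i\otimes 1_{(g_i)})=p^*\big(ev_i^*\alpha_i\boxtimes ev_i^*1_{(g_i)}\big)$; combined with the commutativity of (\ref{diagram:prod_triv}), which identifies $(\theta_{c_1\boxtimes c_2})_*$ with $p^*$ of the external product $(\theta_{c_1})_*\boxtimes(\theta_{c_2})_*$ under the canonical isomorphism, the entire trivialized insertion becomes $p^*$ of an external product $A\boxtimes B$, with $A$ the $c_1$-trivialized $\B$-insertions and $B$ the $c_2$-trivialized $BG$-insertions.

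Next I would dispose of the descendants and set up the pushforward. Adjoining a representable map to the \'etale gerbe $BG$ does not alter the coarse domain curve or its marked points, so the cotangent-line classes satisfy $\psi_i=q^*\psi_i$; hence all descendant factors are pulled back from $\Mbar_{g,n}(\B,d)$. The geometric heart of the argument is that, because $BG$ is a $0$-dimensional \'etale gerbe, maps to $BG$ are unobstructed and the relative obstruction theory of $q$ is trivial. Consequently $[\Mbar_{g,n}(\B\times BG,d)]^{vir}=q^![\Mbar_{g,n}(\B,d)]^{vir}$, and the fiber of $q$ over a stable map $f:\C\to\B$ is the (finite) groupoid of $G$-covers of the twisted domain $\C$, weighted by the automorphism factors built into the weighted virtual class of \cite{agv1,tseng}.

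With the integrand written as $q^*A\cdot B$ and the virtual class pulled back along $q$, I would conclude by the projection formula:
\[
\int_{[\Mbar_{g,n}(\B\times BG,d)]^{vir}}q^*A\cdot B=\int_{[\Mbar_{g,n}(\B,d)]^{vir}}A\cdot q_*\!\left(B\cap[\Mbar_{g,n}(\B\times BG,d)]^{vir}\right).
\]
Since $B$ has cohomological degree $0$ and the fiberwise integral of $B$ over the $G$-covers is locally constant in families, $q_*\big(B\cap[\Mbar_{g,n}(\B\times BG,d)]^{vir}\big)$ equals $\Lambda_{g,n}^{BG,c_2}(1_{(g_1)},\dots,1_{(g_n)})\cdot[\Mbar_{g,n}(\B,d)]^{vir}$, where the constant is precisely the purely $BG$ integral $\int_{[\Mbar_{g,n}(BG)]^{vir}}(\theta_{c_2})_*\big(\prod_i ev_i^*1_{(g_i)}\big)$ computed fiberwise. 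Pulling this constant out and recognizing the remaining integral as $\langle\prod_i\tau_{a_i}(\alpha_i)\rangle^{\B,c_1}_{g,n,d}$ yields (\ref{eqn:gw_c1c2_2}).

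The main obstacle is the virtual-class step, and within it the discrepancy between the stabilizations used by the two projections. A component of $\C$ on which $f$ is constant but which carries nontrivial $G$-monodromy is contracted by the $\B$-stabilization yet need not be contracted by the combined stable map, so $q$ is genuinely not an isomorphism and its fibers must be analyzed as moduli of $G$-covers, including unstable rational tails. I would control these contributions by working throughout with twisted stable maps and the weighted virtual fundamental classes of \cite{agv1,tseng}, verifying that such contracted components contribute only the gerbe/automorphism factors already incorporated into the weighting, so that the relative obstruction theory of $q$ is trivial and $q_*$ produces the product with no spurious multiplicities. Checking this compatibility---and that $B$ is fiberwise locally constant, so that $\Lambda_{g,n}^{BG,c_2}$ is well defined independently of the point of $\Mbar_{g,n}(\B,d)$---is the crux of the proof.
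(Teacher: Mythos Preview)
Your proposal is correct and follows essentially the same route as the paper's (alternative) proof: split the insertions as external products via $p$, invoke the commutativity of diagram (\ref{diagram:prod_triv}) to factor the trivialization, observe that the $BG$-factor $(\theta_{c_2})_*\big(\prod_i ev_i^*1_{(g_i)}\big)$ is a constant, pull back the descendants along $q=p_1\circ p$, and then push the virtual class forward to $\Mbar_{g,n}(\B,d)$.

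The only real difference is in how the virtual pushforward step is justified. The paper does not argue in place that the relative obstruction theory of $q$ is trivial; instead it invokes Lemma~\ref{lem:pushforward}, which gives the formula $(p_1p)_*[\Mbar_{g,n}(\B\times BG,d)]^{vir}=\Omega_g^G((g_1),\dots,(g_n))\,[\Mbar_{g,n}(\B,d)]^{vir}$ and is proved separately via Manolache's virtual pushforward machinery (Appendix~\ref{app:vir_push}) together with an explicit count of liftings. Your direct argument---that $T_{BG}=0$ forces $q^![\Mbar_{g,n}(\B,d)]^{vir}=[\Mbar_{g,n}(\B\times BG,d)]^{vir}$---is morally the same observation (and is exactly what underlies the isomorphism of obstruction theories in (\ref{eqn:map_obs})), but the paper packages it as a general lemma about \'etale maps of targets rather than proving it inline. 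Your final paragraph about stabilization and contracted components identifies precisely the subtlety that Appendix~\ref{app:vir_push} handles via the auxiliary stack $\mathfrak{M}$ of maps of twisted curves; had you carried that analysis through, you would have reproduced that appendix.
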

\begin{proof}
The virtual fundamental classes $[\Mbar_{g,n}(\B\times BG, \beta)]^{vir}$, $[\Mbar_{g,n}(\B, \beta)]^{vir}$, and $[\Mbar_{g,n}(BG)]$ are explicitly related. This is a special case of the main result of \cite{ajt1} and is somewhat complicated to state. However, given the commutativity of (\ref{diagram:prod_triv}), the invariant (\ref{eqn:gw_c1c2}) may be processed in the same way as \cite[Section 5]{ajt1}, which gives the proposition.

An alternative proof of (\ref{eqn:gw_c1c2_2}) is as follows. Write $p_1$ and $p_2$ for the projection map from $\Mbar_{g,n}(\B, \beta)\times \Mbar_{g,n}(BG, (g_1),...,(g_n))$ to the first and second factors respectively. Then it is straightforward to verify that 
$$ev_i^*(\alpha_i\otimes 1_{(g_i)})=p^*(p_1^*ev_i^*\alpha_i\otimes p_2^*ev_i^*1_{(g_i)}),$$
where we have abused notations and denote the evaluation maps on all moduli spaces by $ev_i$. Also, by definition of descendant classes, we have $$\psi_i=p^*p_1^*\psi_i,$$
where, again by abuse of notation, $\psi_i$ on the left-hand side is on $\Mbar_{g,n}(\B\times BG, \beta)$ and $\psi_i$ on the right-hand side is on $\Mbar_{g,n}(\B, \beta)$. Diagram (\ref{diagram:prod_triv}) reads $$p^*(\theta_{c_1}\boxtimes\theta_{c_2})=\theta_{c_1\boxtimes c_2}.$$
Therefore 
\begin{equation*}
\begin{split}
&\left(\theta_{c_1\boxtimes c_2}\right)_* \left(\prod_{i=1}^n ev_i^*(\alpha_i\otimes 1_{(g_i)}) \right)\left(\prod_{i=1}^n\psi_i^{a_i} \right)\\
=&p^*\left(p_1^*\left(\theta_{c_1}\right)_*\left( \prod_{i=1}^nev_i^*\alpha_i\right) p_2^*\left(\theta_{c_2} \right)_*\left(\prod_{i=1}^n ev_i^*1_{(g_i)} \right) \right) p^*p_1^*\left(\prod_{i=1}^n\psi_i^{a_i}\right).
\end{split}
\end{equation*}
In the notation of Appendix \ref{app:twisted_bgamma}, $\left(\theta_{c_2} \right)_*\left(\prod_{i=1}^n ev_i^*1_{(g_i)} \right)=:\theta_{c_2}(1_{(g_1)},...,1_{(g_n)})$ is in $\mathbb{C}$. Then we get 
\begin{equation*}
\begin{split}
&\<\prod_{i=1}^n \tau_{a_i}(\alpha_i\otimes 1_{(g_i)})\>_{g,n,\beta}^{\B\times BG, c_1\boxtimes c_2}\\
=&\text{pt}_*\left( \left(\theta_{c_1\boxtimes c_2}\right)_* \left(\prod_{i=1}^n ev_i^*(\alpha_i\otimes 1_{(g_i)}) \right)\left(\prod_{i=1}^n\psi_i^{a_i} \right)\cap [\Mbar_{g,n}(\B\times BG, \beta)]^{vir}\right)\\
=&\text{pt}_*\left(\left(\theta_{c_1}\right)_*\left( \prod_{i=1}^nev_i^*\alpha_i\right) \prod_{i=1}^n\psi_i^{a_i}\cap (p_1p)_*[\Mbar_{g,n}(\B\times BG, \beta)]^{vir}\right)\cdot \theta_{c_2}(1_{(g_1)},...,1_{(g_n)}).
\end{split}
\end{equation*}
By the virtual pushforward formula, Lemma \ref{lem:pushforward}, we have $$ (p_1p)_*[\Mbar_{g,n}(\B\times BG, \beta)]^{vir}=\Omega_g^G((g_1),...,(g_n))[\Mbar_{g,n}(\B, \beta)]^{vir}.$$
The result follows by the definition of $\Lambda_{g, n}^{BG, c_2}$ in Appendix \ref{app:twisted_bgamma}.
\end{proof}

Let $[\rho_1],..., [\rho_n]\in \widehat{G}_{c_2}$ and let $f_{\rho_1},..., f_{\rho_n}$ be as in (\ref{eqn:idempotent}). Then (\ref{eqn:gw_c1c2_2}) together with Theorem \ref{thm:inv} gives
\begin{equation}\label{eqn:gw_c1c2_3}
\begin{split}
&\<\prod_{i=1}^n \tau_{a_i}(\alpha_i\otimes f_{\rho_i})\>_{g,n,\beta}^{\B\times BG, c_1\boxtimes c_2}\\
=&\begin{cases}
\left(\frac{\text{dim } V_\rho}{|G|}\right)^{2-2g} \<\prod_{i=1}^n\tau_{a_i}(\alpha_i)\>_{g,n,\beta}^{\B, c_1} \quad \text{ if } \rho_1=...=\rho_n=:\rho\\
0 \quad \text{ else}.
\end{cases}
\end{split}
\end{equation}  
It is straightforward to check that the map $a\mapsto a\otimes f_\rho$ gives the inverse to the isomorphism $I: H^\bullet_{CR}(\B\times BG, c_1\boxtimes c_2)\to \oplus_{[\rho]\in \widehat{G}_{c_2}}H^\bullet_{CR}(B, c_1)$ as in \cite{sh-ta-ts}. Therefore Theorem \ref{thm:product} follows directly from Eq. (\ref{eqn:gw_c1c2_3}).

\section{Degree formula}\label{sec:degree} 
In this section, we explain the degree formula of the map 
\[
p: \Mbar_{g,n}(\Y, \beta; \mathfrak{l}_1,...,\mathfrak{l}_n)\to \Mbar_{g,n}(\B, \beta; \pi(\mathfrak{l}_1),...,\pi(\mathfrak{l}_n)). 
\]
We remark that though in this paper we only need the result of the degree formula for an abelian group  $G$, we have decided to develop the result for a general finite group $G$ for potential future applications. 

\subsection{Covering of an $n$-marked orbicurve}\label{subsec:covering}
Let $(\Sigma, \{(x_i, d_i) \}_{i=1}^n)$ be an orbifold Riemann surface of genus $g$ with $n$ marked orbifold points $x_1, \cdots, x_n$ whose orbifold structure at $x_i$ ($i=1,\cdots, n$) is a cyclic group of order $d_i$. In this subsection, we will describe an orbifold groupoid $\mathfrak{Q}_{\Sigma}$ presenting the orbifold Riemann surface $(\Sigma, \{(x_i, d_i)\})$. 

We start with a smooth Riemann surface $(\Sigma_0, \{x_i\}_{i=1}^n)$ of genus $g$ and $n$ marked smooth points $x_1, ..., x_n$. In the following, we construct a groupoid $\mathfrak{Q}_{\Sigma_0}$ presenting $(\Sigma_0, \{x_i\}_{i=1}^n)$ in several steps. 

\begin{enumerate}
\item \label{smooth:step1} For each $x_i$, choose a small disk $D_i$ center at $x_i$. Choose each $D_i$ small enough that $D_i$ intersects $D_j$ trivially when $i\ne j$. 
\item \label{smooth:step2} For each $D_i$, choose small disks $B_{i, J}$ $J=1, \cdots, m$ whose union covers the boundary $\partial D_{i}$. Without loss of generality, we can assume that the centers of $B_{i, J}$ are located in the counterclockwise order on the boundary $\partial D_i$, and  $B_{i, J}$ only intersects nontrivially with $B_{i, J-1}$ and $B_{i, J+1}$, (identify $B_{i, m+1}$ with $B_{i, 1}$), and $B_{i, J}$ intersects trivially with $B_{j, K}$ if $i\ne j$. 
\item \label{smooth:step3} Extend $\{B_{i, J}\}_{i, J}$ to an open cover $\{B_\alpha\}$ of the compliment $\Sigma_0-\cup_{i}D_i$.  As the dimension of $\Sigma_0-\cup_i D_i$ is 2, we can assume that there are no nontrivial quadriple intersections, and if $B_\alpha$ intersects $D_i$ nontrivially, $B_\alpha$ must be one of the $B_{i, J}$ chosen in the previous step. 
\item \label{smooth:step4} Define $M$ to be the disjoint union of all $B_\alpha$ and $D_i$, and $\mathfrak{Q}_{\Sigma_0}$ to be the disjoint union of $B_{\alpha}\cap B_{\beta}$, $B_{i, J}\cap D_i$, $D_i\cap B_{i,J}$, and $D_i$. Given a point $g$ in $\mathfrak{Q}_{\Sigma_0}$, the source (and target) map is defined as follows
\[
s(g):=\left\{\begin{array}{ll}
	g\in B_\alpha,& g\in B_\alpha\cap B_\beta,\\
	g\in B_{i, J},& g\in B_{i,J}\cap D_i,\\
	g\in D_i,& g\in D_i\cap B_{i, J},\\
	g\in D_i,& g\in D_i,
	\end{array}
	\right. 
t(g):=\left\{\begin{array}{ll}
	g\in B_\beta,& g\in B_\alpha\cap B_\beta,\\
	g\in D_i,& g\in B_{i,J}\cap D_i,\\
	g\in B_{i,J},& g\in D_i\cap B_{i, J},\\
	g\in D_i,& g\in D_i.
	\end{array}
	\right. 
\]
\end{enumerate}

The above construction of the groupoid is the \v Cech groupoid associated to the cover $\{B_\alpha, D_i\}$. We leave it to the reader to check that $\mathfrak{Q}_{\Sigma_0}\rightrightarrows M$ is a groupoid. 

Next, we consider an orbifold Riemann surface $(\Sigma, \{(x_i, d_i)\}) $ of genus $g$ with $n$ marked points $x_1, ..., x_n$ and $\integers_{d_i}:=\mathbb{Z}/ d_i\mathbb{Z}$-orbifold structure at $x_i$, $i=1,\cdots, n$. We construct the orbifold groupoid $\mathfrak{Q}_{\Sigma}$ in the following steps as a modification of the above construction. 
\begin{enumerate}
\item Follow Step (\ref{smooth:step1})-(\ref{smooth:step3}) to choose $D_i, B_{i, J}$ that cover the whole surface $\Sigma$. 
\item For each $x_i$, choose $\mathfrak{D}_i$ to be a unit disk equipped with the $\integers_{d_i}$ rotation action $\alpha_i$  on $\mathfrak{D}_i$ such that $\mathfrak{D}_i/\integers_{d_i}$ is homeomorphic to $D_i$. Define $X$ to be the disjoint union of all $B_\alpha$ and $\mathfrak{D}_i$. The arrow space $\calg_\Sigma$ consists of arrows of the following types.
	\begin{itemize}
		\item $g_{\alpha, \beta}\in B_{\alpha}\cap B_\beta$,
		\item $g_{i,M}=(x, M)\in \mathfrak{D}_i\times \integers/d_i \integers$ for $M\in \integers_{d_i}$,
		\item $g_{\alpha, i, M}=(x,M), g_{i, M, \alpha}=(x,M)\in \mathfrak{D}_i\times \integers_{d_i}$ such that $\pi(x)\in D_i\cap B_\alpha$. Use $[g_{\alpha, i,M}], [g_{i,M,\alpha}]$ to denote the image of $x$ in the quotient $D_i=\mathfrak{D}_i/\integers_{d_i}$. 				
	\end{itemize}
\item Define the groupoid structure on $\mathfrak{Q}_\Sigma\rightrightarrows X$ as follows. 
\[
\begin{array}{cc}
s(g):=\left\{\begin{array}{ll}
	g\in B_\alpha,& g=g_{\alpha, \beta},\\
	m \in \mathfrak{D}_i ,& g=g_{i,M},\\
	x\in \mathfrak{D}_i,& g=g_{i,M,\alpha},\\
	\pi(x) \in B_\alpha , & g=g_{ \alpha, i,M},\\
	\end{array}
	\right. ,
&
t(g):=\left\{\begin{array}{ll}
	g\in B_\beta,& g=g_{\alpha, \beta},\\
	\alpha_i^M(x)\in \mathfrak{D}_i,&g=g_{i,M},\\
	\pi(x)\in B_\alpha,& g=g_{i,M,\alpha},\\
	x\in \mathfrak{D}_i,& g=g_{\alpha, i,M}.
	\end{array}
	\right. 
\end{array}. 
\]
\end{enumerate}
With the above structures, it is not hard to figure out the definition of the product on $\calg_\Sigma$, which we leave to the reader. 

We point out that the small disks $D_i$ and $B_\alpha$ can be chosen to be sufficiently small such that the groupoid $\mathfrak{Q}_\Sigma$ and all its nerve spaces $\mathfrak{Q}_\Sigma^{\bullet}$ are disjoint unions of contractible open sets. Let $Z(G)$ be the center of $G$, and $B\mathfrak{Q}_\Sigma$ be the classifying space of the groupoid $\mathfrak{Q}_\Sigma$. A banded $G$-gerbe $\widetilde{\Sigma}$ over $\Sigma$ corresponds to a class in $H^2(B \mathfrak{Q}_\Sigma, Z(G))$. With our construction, such a class can be represented by $Z(G)$-valued continuous 2-cocycle $\tau$ on the groupoid $\mathfrak{Q}_\Sigma$. As $Z(G)$ is finite, such a cocycle is locally constant. 

\subsection{Holonomy of a banded gerbe} 

In this subsection, we introduce the notion of a holonomy for a banded $G$-gerbe $\Y_\Sigma$ over an oriented orbifold Riemann surface $\Sigma$.  

As is discussed in Sec. \ref{subsec:covering}, we represent $\Sigma$ by a proper \'etale groupoid $\mathfrak{Q}\rightrightarrows M$. Let $Z(G)$ be the center of the group $G$, and $\tau$ be a $Z(G)$-valued 2-cocycle on $\mathfrak{Q}$. We assume that the orbifold $\Y:=\Y_\Sigma$ is represented by the following groupoid 
\[
\mathfrak{H}:=G\times_\tau \mathfrak{Q}\rightrightarrows M,\ \text{with}\ (g_1, q_2)(g_2, q_2)=\big(g_1g_2\tau(q_1, q_2), q_1q_2\big). 
\]
Without loss of generality, we assume that $\tau$ is normalized, i.e. $\tau(e, q)=\tau(q,e)=1\in Z(G)$. 

The groupoids $\mathfrak{H}$ and $\mathfrak{Q}$ satisfy the following exact sequence,
\[
1\longrightarrow G\times M\rightrightarrows M \longrightarrow \mathfrak{H}\rightrightarrows M \longrightarrow \mathfrak{Q}\rightrightarrows M\longrightarrow 1. 
\]

Let $x$ be a point of $M$, and $\mathfrak{Q}_x$ be the stabilizer group of $\mathfrak{Q}$ at $x$, and $d_x$ be the order of $\mathfrak{Q}_x$. As $\mathfrak{Q}_x$ fixes $x$, $\mathfrak{Q}_x$ acts faithfully on $T_x M$ by rotation. Let $q$ be the unique element in $\mathfrak{Q}_x$ that acts on $T_x M$ by counter clockwise rotation with angle $2\pi/ d_x$. 
\begin{definition}\label{dfn:holonomy} Define the holonomy of $(\Y,\tau)$ at $x\in M$ to be 
\[
hol(\Y, \tau)(x):=\tau(e, q)\tau(q,q)\tau(q,q^2)\cdots \tau(q,q^{d_x-1})=\tau(q,q)\tau(q,q^2)\cdots \tau(q,q^{d_x-1})\in Z(G).
\] 
Given an $U(1)$-valued 2-cocycle on $\mathfrak{Q}$, we define the holonomy of $(\Y, \tau)$ at $x\in M$  with the exactly same formula
\[
hol(\Y, \tau)(x):=\tau(e, q)\tau(q,q)\tau(q,q^2)\cdots \tau(q,q^{d_x-1})=\tau(q,q)\tau(q,q^2)\cdots \tau(q,q^{d_x-1})\in U(1).
\]
\end{definition}

\begin{proposition}\label{prop:holonomy-dfn} 
Let $Z(G)^{d_x}$ be the subgroup of $Z(G)$ consisting of group elements whose orders are divisible by $d_x$, and $Z_x(G)$ be the quotient group $Z(G)/Z(G)^{d_x}$. 
\begin{enumerate} 
\item If there is a  $q\in \mathfrak{Q}$ with $s(q)=x$ and $t(q)=y$, $hol(\Y, \tau)(x)$ and $hol(\Y, \tau)(y)$ are equal in $Z_x(G)$. 
\item If $\tau$ and $\tau'$ in $Z^2(\mathfrak{Q}, Z(G))$ are different by $\delta \varphi$ where $\varphi$ is a $Z(G)$-valued 1-cochain on $\mathfrak{Q}$, $hol(\Y, \tau)(x)$ and $hol(\Y, \tau')(x)$ are equal in $Z_x(G)$.
\end{enumerate}
\end{proposition}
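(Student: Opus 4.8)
The plan is to establish both statements by reducing them to a single telescoping computation on the cyclic stabilizer group. First I record the structural fact underlying everything: since $\tau$ takes values in the center $Z(G)$, the conjugation action $\Ad$ on $Z(G)$ is trivial, so the cocycle identity (\ref{eq:tau-cocycle}) collapses to the ordinary abelian-coefficient relation $\tau(q_1,q_2)\tau(q_1q_2,q_3)=\tau(q_2,q_3)\tau(q_1,q_2q_3)$. Consequently the restriction of $\tau$ to the cyclic stabilizer $\mathfrak{Q}_x=\langle q\rangle\cong\integers_{d_x}$ is a genuine $Z(G)$-valued $2$-cocycle with trivial coefficients, and the formula $hol(\Y,\tau)(x)=\prod_{i=1}^{d_x-1}\tau(q,q^i)$ is exactly the image of its class under the standard isomorphism $H^2(\integers_{d_x},Z(G))\cong Z(G)/Z(G)^{d_x}$, where $Z(G)^{d_x}=\{z^{d_x}:z\in Z(G)\}$ is the subgroup of $d_x$-th powers. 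I normalize all cochains ($\tau(e,q)=1$, $\varphi(e)=1$) throughout.

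For part (2), I write $\tau'=\tau\cdot\delta\varphi$ with $\delta\varphi(q_1,q_2)=\varphi(q_1)\varphi(q_2)\varphi(q_1q_2)^{-1}$. Substituting into the holonomy product produces a correction factor $\prod_{i=1}^{d_x-1}\varphi(q)\,\varphi(q^i)\,\varphi(q^{i+1})^{-1}$. The repeated first entry contributes $\varphi(q)^{d_x-1}$, while $\prod_{i=1}^{d_x-1}\varphi(q^i)\varphi(q^{i+1})^{-1}$ telescopes to $\varphi(q)\varphi(q^{d_x})^{-1}=\varphi(q)$ using $q^{d_x}=e$ and $\varphi(e)=1$. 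The total correction is therefore $\varphi(q)^{d_x}\in Z(G)^{d_x}$, so $hol(\Y,\tau')(x)$ and $hol(\Y,\tau)(x)$ agree in $Z_x(G)$. This step is routine and presents no difficulty.

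For part (1), an arrow $q'$ from $x$ to $y$ places $x$ and $y$ in the same $\mathfrak{Q}$-orbit, so their stabilizers are isomorphic and $d_x=d_y=:d$; because the groupoid acts by orientation-preserving maps on $\Sigma$, conjugation by $q'$ carries the distinguished counterclockwise generator $q_x$ of $\mathfrak{Q}_x$ to that of $\mathfrak{Q}_y$, i.e.\ $q_y=q'q_x(q')^{-1}$. I then transport the cocycle: I define a $1$-cochain $h:\mathfrak{Q}_x\to Z(G)$ out of the values of $\tau$ on the products $q'\cdot a$ and $(q'a)\cdot(q')^{-1}$, and use the simplified cocycle condition to verify the identity $\tau\big(q'a(q')^{-1},\,q'b(q')^{-1}\big)=\tau(a,b)\,\delta h(a,b)$ for all $a,b\in\mathfrak{Q}_x$. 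This exhibits the pullback $(c_{q'})^{*}(\tau|_{\mathfrak{Q}_y})$ as $\tau|_{\mathfrak{Q}_x}$ twisted by a coboundary, so the comparison of $hol(\Y,\tau)(y)$ with $hol(\Y,\tau)(x)$ reduces to precisely the telescoping of part (2), and the two holonomies agree in $Z_x(G)$.

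The main obstacle is the cochain-level transport in part (1): producing the correct $1$-cochain $h$ and checking the coboundary identity demands a careful, repeated application of the cocycle condition to rewrite $\tau$ on conjugated arguments, together with bookkeeping of the normalization and of the inverse arrow $(q')^{-1}$ (where the factor $\tau(q',(q')^{-1})$ enters). Conceptually this is the familiar fact that groupoid cohomology is insensitive to the choice of representative object in an orbit—conjugation acting trivially on $H^2$ with trivial coefficients—but making it explicit at the cochain level, with the orientation argument pinning down $q_y=q'q_x(q')^{-1}$, is where the real work lies.
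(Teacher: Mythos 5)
Your proof is correct, and part (2) is exactly the paper's own telescoping computation (the paper even has the same $\varphi(q)^{d_x}$ correction, modulo a typo writing $\varphi(x)$ for $\varphi(q)$). For part (1) you take a mildly different route: the paper conjugates $(1,g)$ by $(1,q)$ inside the central extension $\mathfrak{H}$, raises the resulting identity (its Eq.~(\ref{eq:conjugation})) to the $d_x$-th power, and compares the two evaluations, so the discrepancy between the two holonomies appears directly as a product of $d_x$-th powers of values of $\tau$; you instead package the same conjugation formula as the statement that $(c_{q'})^*\big(\tau|_{\mathfrak{Q}_y}\big)$ differs from $\tau|_{\mathfrak{Q}_x}$ by the coboundary of the $1$-cochain $h(a)=\big(\tau(q',a)\tau(q'a,(q')^{-1})\tau(q',(q')^{-1})^{-1}\big)^{-1}$ (obtained by expanding $(1,q')(1,a)(1,b)(1,q')^{-1}$ in two ways), and then quote part (2). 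The underlying computation is identical, but your organization buys a genuine unification of (1) and (2) and makes explicit the interpretation of $hol(\Y,\tau)(x)$ as the image of $[\tau|_{\mathfrak{Q}_x}]$ under $H^2(\integers_{d_x},Z(G))\cong Z(G)/Z(G)^{d_x}$, which the paper leaves implicit; you also correctly supply two points the paper glosses over, namely $d_x=d_y$ with $q_y=q'q_x(q')^{-1}$ via orientation, and the reading of $Z(G)^{d_x}$ as the subgroup of $d_x$-th powers (the paper's literal phrase ``elements whose orders are divisible by $d_x$'' does not define a subgroup, and its own proof uses $d_x$-th powers, so your reading is the intended one).
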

\begin{proof}

\noindent{(1)} Conjugation by $q$ gives an isomorphism between $\mathfrak{Q}_x$ and $\mathfrak{Q}_y$, i.e. $q^{-1} g q\in \mathfrak{Q}_y$ for any $g\in \mathfrak{Q}_x$. 
Consider $(1, q^{-1})(1,g)(1,q)$ in $\mathfrak{H}$. It is computed to be
\begin{equation}\label{eq:conjugation}
\Big(\tau\big(q^{-1},g\big)\tau\big(q^{-1}g, q\big), q^{-1}gq\Big)=\Big(\tau\big(q^{-1},g\big)\tau\big(q^{-1}g, q\big), 1\Big)\Big(1, q^{-1}gq\Big). 
\end{equation}
Therefore, $(1, q^{-1})(1,g)(1,q)(1, q^{-1})(1,g)(1,q)$ can be computed in two ways. If we compute the multiplication in the middle first, we have
\begin{equation}\label{eq:square}
\begin{split}
[(1, q^{-1})(1,g)(1,q)]^2&=(1,q^{-1})(1,g)\Big(\tau(q,q^{-1}),1\Big)(1,g)(1,q)\\ 
&=\Big(\tau(q,q^{-1}),1\Big)(1,q^{-1})(1,g)^2 (1,q).
\end{split}
\end{equation}
On the other hand, using Eq. (\ref{eq:conjugation}), we have
\begin{equation}\label{eq:middle}
[(1, q^{-1})(1,g)(1,q)]^2=\Big[\Big(\tau\big(q^{-1},g\big)\tau\big(q^{-1}g, q\big), 1\Big)\Big]^2 \Big(1, q^{-1}gq\Big)^2.
\end{equation}
Extending the above computation, $[(1, q^{-1})(1,g)(1,q)]^{d_x}$ can be computed in the following two ways. Generalizing Eq. (\ref{eq:square}), we
have 
\begin{eqnarray*}
[(1, q^{-1})(1,g)(1,q)]^{d_x}&=&\Big(\tau(q,q^{-1}),1\Big)^{d_x-1}(1,q^{-1})(1,g)^{d_x} (1,q)\\
&=&\Big(\tau(q,q^{-1}),1\Big)^{d_x-1}(1,q^{-1})(hol(\Y, \tau)(x),1)(1,q)\\
&=&\Big(\tau(q,q^{-1}),1\Big)^{d_x}(hol(\Y, \tau)(x),1)\\
&=&\Big(\tau(q,q^{-1})^{d_x}hol(\Y, \tau)(x),1\Big). 
\end{eqnarray*}
Generalizing Eq. (\ref{eq:middle}), we have
\begin{eqnarray*}
[(1, q^{-1})(1,g)(1,q)]^{d_x}&=&\Big[\Big(\tau\big(q^{-1},g\big)\tau\big(q^{-1}g, q\big), 1\Big)\Big]^{d_x} \Big(1, q^{-1}gq\Big)^{d_x}\\
&=&\Big(\tau\big(q^{-1},g\big)^{d_x}\tau\big(q^{-1}g, q\big)^{d_x}, 1\Big) \Big(hol(\Y, \tau)(y), 1\Big)\\
&=&\Big(\tau\big(q^{-1},g\big)^{d_x}\tau\big(q^{-1}g, q\big)^{d_x}hol(\Y, \tau)(y), 1\Big).
\end{eqnarray*}
The desired equation follows from the identification of the two different computations. \\

\noindent{(2)} As $\tau$ and $\tau'$ are different by a coboundary, we have 
\[
\tau(q_1, q_2)=\tau'(q_1, q_2)\varphi(q_1)\varphi(q_2)\varphi(q_1q_2)^{-1}. 
\]
When $q_1=q_2=q$, we have
\[
\tau(q,q)=\tau'(q, q)\varphi(q)\varphi(q)\varphi(q^2)^{-1}.
\]
When $q_1=q, q_2=q^k$, we have
\[
\tau(q,q^k)=\tau'(q, q^k)\varphi(q)\varphi(q^k)\varphi(q^{k+1})^{-1}.
\]
Multiplying $\tau(q,q^k)$ for $k=1, ..., d_x-1$, we have 
\[
hol(\Y, \tau)(x)=hol(\Y, \tau')\varphi(x)^{d_x},
\]
which implies the desired equation in $Z_x(G)$. 
\end{proof}

Proposition \ref{prop:holonomy-dfn} leads to the following definition. 
\begin{definition}\label{def:orbi-hol} Let $[x]$ be the associated point in the quotient $\Sigma$. The holonomy $Hol(\Y)([x])$ of $\Y_\Sigma$ at $[x]$ is defined to be the image of $hol(\Y, \tau)(x)$ in the group $Z_{[x]}(G):=Z_x(G)$. 
\end{definition}


\subsection{Liftings of  a banded orbicurve}
Let $(\Sigma, \{(x_i, d_i) \}_{i=1}^n)$ be an orbifold Riemann surface of genus $g$ with $n$ marked orbifold points $x_1, \cdots, x_n$ whose orbifold structure at $x_i$ ($i=1,\cdots, n$) is a cyclic group of order $d_i$. Let $f: \Sigma\to \B$ be a map such that the induced map on the stabilizer group of $\Sigma$ at every $x_i$ is injective for every $i$. Let $\Y$ be a banded $G$-gerbe on $\B$. In this subsection, we compute the number of orbifold Riemann surface $(\widetilde{\Sigma}, \{(\tilde{x}_i, \tilde{d}_i)\})_{i=1}^n$ together with a map $\tilde{f}:\widetilde{\Sigma}\to \Y$ and a covering map $p:\widetilde{\Sigma}\to\Sigma$ satisfying the following diagram 
\begin{equation}\label{eq:lifiting}
\xymatrix{
{\widetilde{\Sigma}}\ar[d]^{p}\ar[r]^{\tilde{f}} & \Y\ar[d]^{\pi}\\
\Sigma\ar[r]^{f} &\B.
} 
\end{equation}
Let $\Y_\Sigma:=f^*\Y$ be the pullback of $\Y$ along the map $f$. Then we are reduced to the following diagram
\[
\xymatrix{
&\Y_\Sigma \ar[d]^{\pi}\\
{\widetilde{\Sigma}}\ar[ur]^{\tilde{f}}\ar[r]^{p}&\Sigma.
}
\]
Furthermore, we pull $\Y_\Sigma$ back to $\widetilde{\Sigma}$ via the map $p: \widetilde{\Sigma}\to \Sigma$, and have the following diagram
\[
\xymatrix{
{p^*\Y_\Sigma} \ar[d] ^{\pi}\ar[r]^{\tilde{p}}&{f^*\Y}\ar[d]^\pi\\ 
{\widetilde{\Sigma}} \ar@/^/[u] ^{\tilde{f}}\ar[r]^{p}&\Sigma.
}
\]
With the above diagram, to count the number of $\widetilde{\Sigma}$ satisfying (\ref{eq:lifiting}), it is equivalent to count the number of coverings $\widetilde{\Sigma}$ together with sections $\tilde{f}: \widetilde{\Sigma}\to p^*\Y_\Sigma$. 

Let $\mathfrak{Q}_{\Sigma}$ be a proper \'etale groupoid presenting $(\Sigma, \{(x_i, d_i)\}_{i=1}^n)$ as constructed in Sec. \ref{subsec:covering}. A similar construction also defines a proper \'etale groupoid $\mathfrak{Q}_{\widetilde{\Sigma}}$ presenting the orbifold $\widetilde{\Sigma}$ together with a canonical map $p: \mathfrak{Q}_{\widetilde{\Sigma}}\to \mathfrak{Q}_{\Sigma}$.  Let $\tau$ be a $Z(G)$-valued 2-cocycle on the groupoid $\mathfrak{Q}_\Sigma$ representing the orbifold $\Y_\Sigma$.  $p^*(\tau)$ is a $Z(G)$-valued 2-cocycle on the groupoid $\mathfrak{Q}_{\widetilde{\Sigma}}$. The orbifolds $p^*\Y_\Sigma$ and $\Y_\Sigma$ can be represented by the groupoids
\[
\mathfrak{H}_{\widetilde{\Sigma}}:=G\times_{\tau} \mathfrak{Q}_{\widetilde{\Sigma}},\ \text{and}\ \mathfrak{H}_\Sigma:=G\times_\tau \mathfrak{Q}_{\Sigma}
\]
where the map $\pi: \mathfrak{H}_{\widetilde{\Sigma}}\to \mathfrak{Q}_{\widetilde{\Sigma}}$ is the forgetful map by $\pi(g, q)=q$.

As we have chosen the covering of $\widetilde{\Sigma}$ sufficiently fine, we can represent the morphism $\tilde{f}: \widetilde{\Sigma}\to p^*\Y_\Sigma$ by a groupoid morphism from $\mathfrak{Q}_{\widetilde{\Sigma}}$ to $\mathfrak{H}_{\widetilde{\Sigma}}$, which is called again $\tilde{f}: \mathfrak{Q}_{\widetilde{\Sigma}}\to \mathfrak{H}_{\widetilde{\Sigma}}$. As $\pi\circ \tilde{f}$ is the identity map, we can write the map $\tilde{f}: \mathfrak{Q}_{\widetilde{\Sigma}}\to \mathfrak{H}_{\widetilde{\Sigma}}$ in the following form $\tilde{f}(q)=(F(q), q)$, where $F: \mathfrak{Q}_{\widetilde{\Sigma}}\to G$ is a smooth map from $\mathfrak{Q}_{\widetilde{\Sigma}}$ to $G$. We call two morphisms $\tilde{f}_1$ and $\tilde{f}_2$ {\em equivalent} if there is a map $\varphi: X\to G$ such that 
\[
F_1(q)=\varphi(s(q))F_2(q)\varphi(t(q))^{-1},\ \forall q\in \mathfrak{Q}_{\widetilde{\Sigma}},
\]
where $F_1$ and $F_2$ from $\mathfrak{Q}_{\widetilde{\Sigma}}$ to $G$ are the components associated to $\tilde{f}_1$ and $\tilde{f}_2$. The following proposition follows directly from definitions. 

\begin{prop}\label{prop:F-conjugation} Given the orbifold Riemann surface $\widetilde{\Sigma}$ together with the map $p: \widetilde{\Sigma}\to \Sigma$, maps $\tilde{f}$  from $\widetilde{\Sigma}$ to $p^*\Y$ are in one-to-one correspondence with equivalent classes of maps from $\mathfrak{Q}_{\widetilde{\Sigma}}$ to $\mathfrak{H}_{\widetilde{\Sigma}}$. 
\end{prop}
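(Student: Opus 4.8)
The plan is to read off the claimed bijection from the standard dictionary between maps of orbifolds and morphisms of their presenting \'etale groupoids, identifying the conjugation relation in the statement with the notion of invertible natural transformation. Because the cover of $\widetilde{\Sigma}$ was chosen so fine that every nerve space of $\mathfrak{Q}_{\widetilde{\Sigma}}$ is a disjoint union of contractible opens, I may invoke this correspondence (as in \cite{mo-pr-ind}) in its strict form: a section $\tilde{f}\colon \widetilde{\Sigma}\to p^*\Y_\Sigma$ is represented, with no further refinement needed, by a strict groupoid homomorphism $\mathfrak{Q}_{\widetilde{\Sigma}}\to \mathfrak{H}_{\widetilde{\Sigma}}$ lying over $\mathrm{id}_{\mathfrak{Q}_{\widetilde{\Sigma}}}$, and two such homomorphisms present the same section exactly when they are related by an invertible natural transformation.

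First I would construct the forward assignment. Rigidifying $\tilde{f}$ to a groupoid homomorphism and using $\pi\circ\tilde{f}=\mathrm{id}$ together with the fact that $\tilde{f}$ induces the identity on the unit space $X$, one finds $\tilde{f}(q)=(F(q),q)$ for a locally constant $F\colon \mathfrak{Q}_{\widetilde{\Sigma}}\to G$; the homomorphism property is then precisely the twisted identity $F(q_1q_2)=F(q_1)F(q_2)\tau(q_1,q_2)$. I send $\tilde{f}$ to the equivalence class of $F$.

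Next I would match the equivalence relation with $2$-isomorphism. A natural transformation between two object-identity homomorphisms $\tilde{f}_1,\tilde{f}_2$ assigns to each $x\in X$ an arrow $\eta(x)\in\mathfrak{H}_{\widetilde{\Sigma}}$ over the unit at $x$; such an arrow lies in the gerbe direction, so $\eta(x)=(\varphi(x),e)$ for some $\varphi\colon X\to G$. Writing out the naturality square for each $q$ and using the normalization $\tau(q,e)=\tau(e,q)=1$, a single multiplication in $\mathfrak{H}_{\widetilde{\Sigma}}$ collapses it to the relation $F_1(q)=\varphi(s(q))F_2(q)\varphi(t(q))^{-1}$ of the statement (the precise placement of $s$ and $t$ being fixed by the composition convention). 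This shows two sections agree iff the classes of $F_1$ and $F_2$ coincide, giving injectivity; surjectivity is immediate, since any homomorphism $q\mapsto(F(q),q)$ reassembles the local $G$-valued gluing data into a genuine section $\tilde{f}$. Hence $\tilde{f}\mapsto[F]$ is the desired bijection.

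The only point requiring genuine care is this $2$-categorical input: verifying that the fineness of the cover makes \emph{strict} homomorphisms sufficient, so that no generalized (zig-zag) morphisms enter, and that invertible natural transformations between object-identity homomorphisms are \emph{exactly} the gauge transformations by maps $\varphi\colon X\to G$. Once this is granted, the remainder is a direct transcription of the groupoid multiplication, which is why the proposition follows essentially from the definitions assembled above.
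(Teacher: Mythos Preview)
Your proposal is correct and is precisely the unpacking of what the paper means when it asserts, in lieu of a proof, that ``the following proposition follows directly from definitions.'' You have made explicit the standard dictionary (orbifold maps $\leftrightarrow$ strict groupoid homomorphisms up to natural transformation, available here because the cover is fine) that the paper leaves implicit, and your identification of the gauge relation $F_1(q)=\varphi(s(q))F_2(q)\varphi(t(q))^{-1}$ with invertible natural transformation between object-identity homomorphisms is exactly the content the paper takes for granted.
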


Let $X^*$ be the disjoint union of $B_\alpha$ in Sec. \ref{subsec:covering}. The restrictions of $\mathfrak{Q}_\Sigma$ and $\mathfrak{Q}_{\widetilde{\Sigma}}$ to $X^*$ coincide and present the smooth punctured Riemann surface $\Sigma^*$. We denote this groupoid by $\mathfrak{Q}_{\Sigma^*}$, which is a subgroupoid of $\mathfrak{Q}_{\Sigma}$ and $\mathfrak{Q}_{\widetilde{\Sigma}}$. Let $\tau$ be the $Z(G)$-valued 2-cocycle on $\mathfrak{Q}_{\Sigma}$ that defines the gerbe $\Y_\Sigma$.  The restriction of $\tau$ to $\mathfrak{Q}_{\Sigma^*}$ is a $Z(G)$-valued 2-cocycle $\tau_{\Sigma^*}$ on $\mathfrak{Q}_{\Sigma^*}$. As the groupoid $\mathfrak{Q}_{\Sigma^*}$ is Morita equivalent to $\Sigma^*$ which is homotopy equivalent to its 1-skeleton, $H^2(B\mathfrak{Q}_{\Sigma^*}, Z(G))=H^2(\Sigma^*, Z(G))$ vanishes. Hence, $\tau_{\Sigma^*}$ is a coboundary $\delta \varphi$, where $\varphi$ is a $Z(G)$-valued 1-cochain on $\mathfrak{Q}_{\Sigma^*}$. As $\mathfrak{Q}_{\Sigma^*}$ is both open and closed in $\mathfrak{Q}_{\Sigma}$, $\varphi$ can be extended to a smooth $Z(G)$-valued 1-cochain $\psi$ on $\mathfrak{Q}$ by
\[
\psi(q):=\left\{\begin{array}{ll}\varphi(q),&q\in \mathfrak{Q}_{\Sigma^*},\\ 0,&\text{else}.\end{array}\right.
\] 
Without loss of generality, we will work below with the modified cocycle $\tau-\delta \psi$ on $\mathfrak{Q}_{\Sigma}$ whose restriction to $\mathfrak{Q}_{\Sigma^*}$ vanishes. We will still call this cocycle $\tau$, which pulls back to $p^*\tau$ a cocycle on $\mathfrak{Q}_{\widetilde{\Sigma}}$. 

Recall  that the groupoid $\mathfrak{Q}_{\Sigma}$ consists of arrows 
\[
g_{\alpha, \beta}, g_{i, M}, g_{\alpha, i,M}, g_{i, M, \alpha},
\]
satisfying the following multiplication rule
\begin{eqnarray*}
&g_{\alpha, \beta} g_{\beta, \gamma} =g_{\alpha, \gamma},\ &g_{\alpha, \beta}g_{\beta, i, M}=g_{\alpha, i, M},\\
&g_{\alpha,i, M}g_{i,M'} =g_{\alpha, i, M+M'},\ &g_{i, M}g_{i,M'}=g_{i, M+M'},\\
&g_{i,M}g_{i,M', \alpha} =g_{i, M+M', \alpha},\ &g_{i, M, \alpha} g_{\alpha, \beta}=g_{i, M, \beta}.
\end{eqnarray*}
For each $1\leq i\leq n$, $g_{i, 1}$ is of order $d_i$.  Similarly, the groupoid $\mathfrak{Q}_{\widetilde{\Sigma}}$ consists of arrows 
\[
g_{\alpha, \beta}, \tilde{g}_{i, M_i+N_id_i}, \tilde{g}_{\alpha, i, M_i+N_id_i}, \tilde{g}_{i, M_i+N_id_i, \alpha}, 
\]
for $i=1,\cdots, n$, and $M_i=1, \cdots, d_i-1 $, and $N_i=1, \cdots, m_i-1$, and $g_{i,1}$ is of order $m_id_i$.  The above arrows for $\mathfrak{Q}_{\widetilde{\Sigma}}$ satisfy the similar relations as those of $\mathfrak{Q}_{\Sigma}$. The map $p: \widetilde{\Sigma}\to \Sigma$ can be realized by a groupoid morphism $p:\mathfrak{Q}_{\widetilde{\Sigma}}\to \mathfrak{Q}_{\Sigma}$, i.e. 
\[
p(g_{\alpha, \beta})=g_{\alpha, \beta},\ p(\tilde{g}_{i, M_i+N_i d_i})=g_{i, M_i},\ p(\tilde{g}_{\alpha, i, M_i+N_id_i})=g_{\alpha, i, M_i},\ p(\tilde{g}_{i, M_i+N_id_i, \alpha})=g_{i, M_i, \alpha}. 
\]

Recall that $\tilde{f}:\mathfrak{Q}_{\widetilde{\Sigma}}\to \mathfrak{H}_{\widetilde{\Sigma}}$ is a groupoid morphism. Accordingly, we have the following equation
\[
F(q_1)F(q_2)\tau(q_1, q_2)=F(q_1q_2), 
\]
for $q_1, q_2\in \mathfrak{Q}_{\widetilde{\Sigma}}$.  In term of the arrows $g_{\alpha, \beta}, \tilde{g}_{i, M}, \tilde{g}_{\alpha, i,M}, \tilde{g}_{i, M, \alpha}$, we have the following equations
\begin{equation}\label{eq:F}
\begin{array}{ll}
F(g_{\alpha, \beta}) F(g_{\beta, \gamma}) =F(g_{\alpha, \gamma}),\ &F(g_{\alpha, \beta})F(\tilde{g}_{\beta, i, M})\tau(g_{\alpha, \beta}, \tilde{g}_{\beta, i, M})=F(\tilde{g}_{\alpha, i, M}),\\
F(\tilde{g}_{\alpha,i, M})F(\tilde{g}_{i,M'} )\tau(\tilde{g}_{\alpha, i, M}, \tilde{g}_{i, M'})=F(\tilde{g}_{\alpha, i, M+M'}),\ &F(\tilde{g}_{i, M})F(\tilde{g}_{i,M'})\tau(\tilde{g}_{i, M}, \tilde{g}_{i, M'})=F(\tilde{g}_{i, M+M'}),\\
F(\tilde{g}_{i,M})F(\tilde{g}_{i,M', \alpha})\tau(\tilde{g}_{i, M}, \tilde{g}_{i, M', \alpha}) =F(\tilde{g}_{i, M+M', \alpha}),\ &F(\tilde{g}_{i, M, \alpha})F(\tilde{g}_{\alpha, \beta})\tau(\tilde{g}_{i, M, \alpha}, \tilde{g}_{\alpha, \beta})=F(\tilde{g}_{i, M, \beta}). 
\end{array}
\end{equation}
As $F$ can be changed by a coboundary, without loss of generality, in the following computation, we always assume that $F$ maps units in $\mathfrak{Q}_{\widetilde{\Sigma}}$  to the identity element in $G$. 

\begin{proposition}\label{prop:sigma*}The restriction of the map $F$ to $\mathfrak{Q}_{\Sigma^*}$ is groupoid morphism from $\mathfrak{Q}_{\Sigma^*}$ to $G$. This defines a map from $\Sigma^*$ to the classifying space $BG$ of principal $G$-bundles.  Equivalent classes of such maps correspond to isomorphism classes of principal $G$-bundles over $\Sigma^*$. 
\end{proposition}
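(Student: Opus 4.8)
The plan is to exploit the normalization established immediately before the statement, namely that the chosen cocycle $\tau$ restricts to the trivial cocycle on the subgroupoid $\mathfrak{Q}_{\Sigma^*}$. First I would substitute this into the groupoid-morphism equation $F(q_1)F(q_2)\tau(q_1,q_2)=F(q_1q_2)$. For composable arrows $q_1,q_2$ lying entirely in $\mathfrak{Q}_{\Sigma^*}$---these are precisely the arrows of type $g_{\alpha,\beta}$---the factor $\tau(q_1,q_2)$ is the identity of $G$, so the equation collapses to $F(q_1)F(q_2)=F(q_1q_2)$. This is exactly the first relation in (\ref{eq:F}), and it expresses that $F|_{\mathfrak{Q}_{\Sigma^*}}\colon\mathfrak{Q}_{\Sigma^*}\to G$ is an honest groupoid homomorphism into the one-object groupoid $G$. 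This gives the first assertion.

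Next I would recall from Section \ref{subsec:covering} that $\mathfrak{Q}_{\Sigma^*}$ is the \v Cech groupoid of the open cover $\{B_\alpha\}$ of the smooth punctured surface $\Sigma^*$, and that, as already observed in that section, $\mathfrak{Q}_{\Sigma^*}$ is Morita equivalent to the manifold $\Sigma^*$ with $B\mathfrak{Q}_{\Sigma^*}\simeq \Sigma^*$. Passing to classifying spaces, the groupoid homomorphism $F|_{\mathfrak{Q}_{\Sigma^*}}$ induces a continuous map $\Sigma^*\simeq B\mathfrak{Q}_{\Sigma^*}\to BG$, which is the promised map to the classifying space of principal $G$-bundles.

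For the last assertion I would invoke the standard \v Cech description of principal bundles via transition functions. The values $F(g_{\alpha,\beta})\in G$ on the overlaps $B_\alpha\cap B_\beta$ are exactly a system of transition functions, and the relation $F(g_{\alpha,\beta})F(g_{\beta,\gamma})=F(g_{\alpha,\gamma})$ is the \v Cech $1$-cocycle condition defining a principal $G$-bundle on $\Sigma^*$. The equivalence relation on the maps $\tilde f$ recorded in Proposition \ref{prop:F-conjugation}, namely $F_1(q)=\varphi(s(q))F_2(q)\varphi(t(q))^{-1}$ for $\varphi\colon X\to G$, restricts over $X^*$ to precisely the coboundary relation identifying two systems of transition functions that yield isomorphic bundles. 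Thus equivalence classes of such $F$ are in bijection with isomorphism classes of principal $G$-bundles over $\Sigma^*$, equivalently with homotopy classes of maps $\Sigma^*\to BG$.

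The only step that requires genuine care is this last identification: one must verify that the equivalence relation coming from Proposition \ref{prop:F-conjugation} matches, on the nose, the \v Cech coboundary relation for transition functions---that the object-level gauge transformation $\varphi|_{X^*}$ is exactly what effects an isomorphism of the associated principal bundles. Everything preceding it is formal once the vanishing of $\tau$ on $\mathfrak{Q}_{\Sigma^*}$ is in hand.
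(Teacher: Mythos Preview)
Your proposal is correct and follows essentially the same approach as the paper: the paper's proof simply cites the first equation in (\ref{eq:F}), i.e. $F(g_{\alpha,\beta})F(g_{\beta,\gamma})=F(g_{\alpha,\gamma})$, and then declares that the rest ``follows directly from the definition of principal $G$ bundle.'' Your argument is a faithful expansion of that one-line proof, spelling out the \v Cech transition-function interpretation and the matching of the equivalence relation from Proposition \ref{prop:F-conjugation} with the coboundary relation---details the paper leaves implicit.
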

\begin{proof}This is a corollary from the first equation in Eqs. (\ref{eq:F}), i.e.
\[
F(g_{\alpha, \beta}) F(g_{\beta, \gamma}) =F(g_{\alpha, \gamma}). 
\]
The remaining part of the proposition follows directly from the definition of principal $G$ bundle. 
\end{proof}

In the following, we describe the topological conditions that the morphism $\tilde{f}$ satisfies. Choose a base point $x_0$ in $\Sigma$. We first make a careful choice of generators of the fundamental group of the punctured surface $\Sigma^*:=\Sigma-\{x_1, \cdots, x_n\}$. 

For each $D_i$ ($i=1,...,n$), choose a point $w_i$ on the boundary of $D_i$, and a corresponding point $z_i$ on the boundary of $\mathfrak{D}_i$ which under the quotient map is mapped to $w_i$.  For each $i$ choose a path $l_i$ in $\Sigma^*$ connecting $x_0$ to $y_0$, and call the loop starting from $x_0$ followed by $l_i$, the boundary of $D_i$ (oriented counter clockwise), and the inverse of $l_i$ by $L_i$.  Recall that in the cover introduced in Sec. \ref{subsec:covering}, we have chosen $B_{i, 1}, \cdots, B_{i, J}$ to cover the boundary of  $D_i$. We extend this cover by $C_{i,0}, C_{i, 1}, \cdots, C_{i, K}$ to cover the path $l_i$ such that $C_{i,K}=B_{i,1}$. We assume that $C_{1,0}=C_{2,0}=...=C_{n,0}$, and the disks are chosen to be sufficiently small that there are no nontrivial triple intersections. For each $i$, we order $\{ C_{i, k}\} $  so that $C_{i, k}$ only intersects $C_{i, k-1}$ and $C_{i, k+1}$ nontrivially.  We denote an arrow in $\mathfrak{Q}_{\Sigma^*}$ starting from $C_{i,s}$ to $C_{i,s+1}$ and $C_{i,s-1}$ by $g_{i, s,s+1}$ and $g_{i, s,s-1}$, and denote an arrow in $\mathfrak{Q}_{\Sigma^*}$ starting from $B_{i, t}$ to $B_{i, t+1}$ by $h_{i, t,t+1}$.

\begin{definition}\label{dfn:F-fundamental} For $j=1, \cdots, n$, define $F(L_j)$ to be 
\[
\begin{split}
F(L_j):=&F\big(g_{(j,0), (j,1)}\big) F\big(g_{(j, 1), (j, 2)}\big)\cdots F\big(g_{(j,K-1), (j, K)}\big)\\
&F\big(h_{j,1,2}\big)  F\big(h_{j, 2,3}\big)\cdots F\big(h_{j, J, 1}\big)F\big(g_{(j,K), (j, K-1)}\big)\cdots F\big(g_{(j,2),(j,1)}\big)F\big(g_{(j,1),(j,0)}\big).
\end{split}
\]
Define $F(l_j)$ and $F(l_j^{-1})$ to be
\[
\begin{split}
F(l_j):=&F\big(g_{(j,0), (j,1)}\big) F\big(g_{(j, 1), (j, 2)}\big)\cdots F\big(g_{(j,K-1), (j, K)}\big),\\ 
F(l_j^{-1}):=&F\big(g_{(j,K), (j, K-1)}\big)\cdots F\big(g_{(j,2),(j,1)}\big)F\big(g_{(j,1),(j,0)}\big).
\end{split}
\]
\end{definition}
We remark that as $G$ is discrete and $F$ is smooth, $F$ is locally constant.
As we have assumed that $F$ maps units of $\mathfrak{Q}_{\widetilde{\Sigma}}$ to identity of $G$, we have that 
\[
F(l_j)F(l_j^{-1})=1,\qquad F(l_j^{-1})=F(l_j)^{-1}.
\]

As Definition \ref{dfn:F-fundamental}, we fix $x_i, y_i$, $i=1,\cdots, g$ to be loops in the smooth Riemann surface $\Sigma$ of genus $g$ based at $x_0$, and cover $x_i$ and $y_i$ by $A_{i, p}$, $B_{i, q}$ counter clockwise. Let $x_{i, s,s+1}$ and $y_{i, t,t+1}$ be the arrows in $\mathfrak{Q}_{\Sigma^*}$ starting from $A_{i, s}$ and $B_{i, t}$ and ending in $A_{i, s+1}$ and $B_{i,t+1}$, $s=1,..., J,\ t=1,..., K$. 
\begin{definition}
\label{dfn:F-xy} For $i=1,\cdots, g$, define $F(x_i)$ and $F(y_i)$ to be
\[
\begin{split}
F(x_i):=F(x_{i,1,2})&\cdots F(x_{i,J,1 }),\qquad F(y_i):=F(y_{i,1,2})\cdots F(y_{i,J,1 }).
\end{split}
\]
\end{definition}

Assume that $x_i, y_i$ generate the fundamental group $\pi_1(\Sigma, x_0)$ satisfying the following relation
\[
\langle x_i, y_i | [x_1, y_1][x_2, y_2]\cdots [x_g, y_g]=1 \rangle,
\]
where the commutator $[x,y]$ is defined to be $xyx^{-1}y^{-1}$.  Similarly, the fundamental group $\pi_1(\Sigma^*, x_0)$ is generated by $x_i, y_i, L_j$, $i=1, \cdots, g$, $j=1, \cdots, n$, satisfying
\begin{equation}\label{eq:fundamental}
 [x_1, y_1][x_2, y_2]\cdots [x_g, y_g]L_1\cdots L_n=1. 
\end{equation}

Following Eq. (\ref{eq:F}), as $g_{(j,s), (j,s+1)}$, $h_{j,t,t+1}$, $x_{i,s,s+1}$, and $y_{i,t,t+1}$ all belong to $\mathfrak{Q}_{\Sigma^*}$ on which the cocycle $\tau_{\Sigma^*}$ vanishes, $F$ is a groupoid morphism from $\mathfrak{Q}_{\Sigma^*}$ to $G$. Globally, as $F$ is locally constant, $F$ defines a flat principal $G$-bundle on $\Sigma^*$. And equivalent classes of morphisms $F: \mathfrak{Q}_{\Sigma^*}\to G$  corresponding to isomorphism classes of flat $G$-bundles on $\Sigma^*$. Given $x_0\in \Sigma^*$, isomorphism classes of flat $G$-bundles on $\Sigma^*$ are in one-to-one correspondence with elements of the set $\operatorname{Hom}(\pi(\Sigma^*, x_0), G)/G$, where the group $G$ acts on $\operatorname{Hom}(\pi(\Sigma^*, x_0), G)$ by conjugation. More concretely, the above map $F$ induces a map $\widetilde{F}\in \operatorname{Hom}(\pi(\Sigma^*, x_0), G)/G$ that maps $L_j$ to $F(L_j)$, $x_i$ to $F(x_i)$, and $y_i$ to $F(y_i)$, where are introduced in Definition \ref{dfn:F-fundamental} and \ref{dfn:F-xy}. With the relation (\ref{eq:fundamental}), the corresponding $F(x_i)$, $F(y_i)$, and $F(L_j)$ satisfy the following equation,
\begin{equation}\label{eq:F-relation}
[F(x_1), F(y_1)][F(x_2), F(y_2)]\cdots [F(x_g), F(y_g)]F(L_1)\cdots F(L_n)=1. 
\end{equation}

\begin{lemma}\label{lem:F(L)} For $j=1, \cdots,n$, 
\[
\begin{split}
F(L_j)=\Big[\tau\big(\tilde{g}_{1,j,M}, \tilde{g}_{j,1}\big)&\tau\big(h_{j,1,2}, \tilde{g}_{2, j, M}\big)\cdots \tau\big(h_{j,J,1}, \tilde{g}_{1, j, M+1}\big)\Big]^{-1}\\
&F(l_j) F\big(\tilde{g}_{1, j, M+1}\big)F\big(\tilde{g}_{j,1}\big)^{-1} \Big(F(l_j)F\big(\tilde{g}_{1,j,M+1}\big)\Big)^{-1}. 
\end{split}
\]

\end{lemma}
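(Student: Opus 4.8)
The plan is to compute $F(L_j)$ by first reducing it to the holonomy of $F$ around the boundary circle $\partial D_j$, and then pushing that holonomy through the orbifold chart $\mathfrak{D}_j$ of $\widetilde{\Sigma}$, collecting the (central) cocycle corrections along the way.

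First I would use Definition \ref{dfn:F-fundamental} together with the identity $F(l_j^{-1})=F(l_j)^{-1}$, which holds because every arrow along $l_j$ lies in $\mathfrak{Q}_{\Sigma^*}$, where $\tau$ vanishes and $F$ is an honest groupoid morphism by Proposition \ref{prop:sigma*}. This rewrites the holonomy as
\[
F(L_j)=F(l_j)\,\big(F(h_{j,1,2})F(h_{j,2,3})\cdots F(h_{j,J,1})\big)\,F(l_j)^{-1},
\]
so the whole task becomes the evaluation of the boundary product $P:=F(h_{j,1,2})\cdots F(h_{j,J,1})$.

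Next I would push each boundary transition through the chart $\mathfrak{D}_j$ using the groupoid-morphism relations \eqref{eq:F}. For $t=1,\dots,J-1$ the composable arrows satisfy $h_{j,t,t+1}\cdot\tilde{g}_{t+1,j,M}=\tilde{g}_{t,j,M}$, so \eqref{eq:F} gives $F(h_{j,t,t+1})=\tau(h_{j,t,t+1},\tilde{g}_{t+1,j,M})^{-1}F(\tilde{g}_{t,j,M})F(\tilde{g}_{t+1,j,M})^{-1}$; since $\tau$ is $Z(G)$-valued, hence central, the consecutive factors $F(\tilde{g}_{t,j,M})$ telescope to $F(\tilde{g}_{1,j,M})F(\tilde{g}_{J,j,M})^{-1}$. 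The last factor $F(h_{j,J,1})$ is where the covering's branch cut enters: passing from $B_{j,J}$ to $B_{j,1}$ advances the chart sector from $M$ to $M+1$, so the relevant relation is $h_{j,J,1}\cdot\tilde{g}_{1,j,M+1}=\tilde{g}_{J,j,M}$, giving $F(h_{j,J,1})=\tau(h_{j,J,1},\tilde{g}_{1,j,M+1})^{-1}F(\tilde{g}_{J,j,M})F(\tilde{g}_{1,j,M+1})^{-1}$. Multiplying and cancelling $F(\tilde{g}_{J,j,M})^{-1}F(\tilde{g}_{J,j,M})$ reduces $P$ to a central prefactor times $F(\tilde{g}_{1,j,M})F(\tilde{g}_{1,j,M+1})^{-1}$. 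Finally I would convert this into the conjugated rotation of the statement: the chart relation $\tilde{g}_{1,j,M}\cdot\tilde{g}_{j,1}=\tilde{g}_{1,j,M+1}$ and \eqref{eq:F} yield $F(\tilde{g}_{1,j,M})F(\tilde{g}_{1,j,M+1})^{-1}=\tau(\tilde{g}_{1,j,M},\tilde{g}_{j,1})^{-1}F(\tilde{g}_{1,j,M+1})F(\tilde{g}_{j,1})^{-1}F(\tilde{g}_{1,j,M+1})^{-1}$, and collecting all the central cocycle factors assembles exactly $T^{-1}$ with $T=\tau(\tilde{g}_{1,j,M},\tilde{g}_{j,1})\,\tau(h_{j,1,2},\tilde{g}_{2,j,M})\cdots\tau(h_{j,J,1},\tilde{g}_{1,j,M+1})$; conjugating by $F(l_j)$, which commutes past the central $T^{-1}$, gives the asserted identity.

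The main obstacle I anticipate is the sector bookkeeping at the branch cut. One must verify that a single counterclockwise traversal of $\partial D_j$ advances the chart index of $\widetilde{\Sigma}$ by exactly one, from $M$ to $M+1$, and that this single increment is realized by the generator $\tilde{g}_{j,1}$. The delicate point is that the rotation must be recovered at the final telescoping step, not inserted inside the wrap-around factor $F(h_{j,J,1})$: if it is placed there, the product collapses to a purely central element and the genuine, generally non-central, local monodromy $F(\tilde{g}_{j,1})^{-1}$ is lost. Keeping strict track of which arrows lie in $\mathfrak{Q}_{\Sigma^*}$ (where $\tau\equiv 0$ and $F$ is multiplicative) versus the chart-connecting arrows $\tilde{g}_{\bullet,j,\bullet}$ (which carry nontrivial $\tau$) is precisely what forces the cocycle factors to land in the positions recorded in $T$.
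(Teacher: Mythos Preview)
Your proposal is correct and follows essentially the same approach as the paper's proof: reduce $F(L_j)$ to $F(l_j)\,P\,F(l_j)^{-1}$ via Proposition~\ref{prop:sigma*}, rewrite each $F(h_{j,s,s+1})$ using \eqref{eq:F} to telescope the chart factors, and then convert $F(\tilde{g}_{1,j,M})F(\tilde{g}_{1,j,M+1})^{-1}$ into the conjugated $F(\tilde{g}_{j,1})^{-1}$ using the relation $\tilde{g}_{1,j,M}\cdot\tilde{g}_{j,1}=\tilde{g}_{1,j,M+1}$. You are in fact slightly more explicit than the paper about the wrap-around step $h_{j,J,1}\cdot\tilde{g}_{1,j,M+1}=\tilde{g}_{J,j,M}$, which the paper leaves implicit in its compressed product formula.
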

\begin{proof} Following Definition \ref{dfn:F-fundamental} of $F(L_j)$ and the property that $F$ is a groupoid morphism from $\mathfrak{Q}_{\Sigma^*}$ to $G$, we can write 
\[
F(L_j)=F(l_i)F\big(h_{j,1,2}\big)  F\big(h_{j, 2,3}\big)\cdots F\big(h_{j, J, 1}\big)F(l_i)^{-1}.
\]
Recall that from Eq. (\ref{eq:F}), we have
\[
F\big(h_{j,s,s+1}\big) F\big(\tilde{g}_{s+1, j,M}\big)\tau\big(h_{j,s,s+1}, \tilde{g}_{s+1, j, M}\big)=F\big(\tilde{g}_{s,j, M}\big).
\]
Hence, as $\tau$ takes value in the center of $G$, we can write 
\[
F\big(h_{j,s,s+1}\big)=\tau\big(h_{j,s,s+1}, \tilde{g}_{s+1, j, M}\big)^{-1}F\big(\tilde{g}_{s,j, M}\big)F\big(\tilde{g}_{s+1, j,M}\big)^{-1}. 
\]
Multiplying the above equations, we conclude the following equation
\[
\begin{split}
&F\big(h_{j,1,2}\big)  F\big(h_{j, 2,3}\big)\cdots F\big(h_{j, J, 1}\big)\\
=&\Big[\tau\big(h_{j,1,2}, \tilde{g}_{2, j, M}\big)\cdots \tau\big(h_{j,J,1}, \tilde{g}_{1, j, M+1}\big)\Big]^{-1} F\big(\tilde{g}_{1,j, M}\big)F\big(\tilde{g}_{1,j, M+1}\big)^{-1}.
\end{split}
\]

To deal with the last two terms in the above equation, we recall from Eq. (\ref{eq:F}) that
\[
F\big(\tilde{g}_{1,j,M}\big)F\big(\tilde{g}_{j,1}\big)\tau(\tilde{g}_{1,j,M}, \tilde{g}_{j,1})=F\big(\tilde{g}_{1, j, M+1}\big),
\]
and 
\[
F\big(\tilde{g}_{1,j, M}\big)F\big(\tilde{g}_{1,j, M+1}\big)^{-1}=\tau(\tilde{g}_{1,j,M}, \tilde{g}_{j,1})^{-1}F\big(\tilde{g}_{1, j, M+1}\big)F\big(\tilde{g}_{j,1}\big)^{-1} F\big(\tilde{g}_{1,j,M+1}\big)^{-1}. 
\]
In summary, we have the following expression for $F\big(h_{j,1,2}\big)  F\big(h_{j, 2,3}\big)\cdots F\big(h_{j, J, 1}\big)$
\[
\Big[\tau\big(\tilde{g}_{1,j,M}, \tilde{g}_{j,1}\big)\tau\big(h_{j,1,2}, \tilde{g}_{2, j, M}\big)\cdots \tau\big(h_{j,J,1}, \tilde{g}_{1, j, M+1}\big)\Big]^{-1} F\big(\tilde{g}_{1, j, M+1}\big)F\big(\tilde{g}_{j,1}\big)^{-1} F\big(\tilde{g}_{1,j,M+1}\big)^{-1}. 
\]
We can write $F(L_j)$ as 
\[
\begin{split}
\Big[\tau\big(\tilde{g}_{1,j,M}, \tilde{g}_{j,1}\big)&\tau\big(h_{j,1,2}, \tilde{g}_{2, j, M}\big)\cdots \tau\big(h_{j,J,1}, \tilde{g}_{1, j, M+1}\big)\Big]^{-1}\\
&F(l_j) F\big(\tilde{g}_{1, j, M+1}\big)F\big(\tilde{g}_{j,1}\big)^{-1} \Big(F(l_j)F\big(\tilde{g}_{1,j,M+1}\big)\Big)^{-1}. 
\end{split}
\]
\end{proof}

Lemma \ref{lem:F(L)} suggests the following definition. 
\begin{definition}\label{dfn:holonomy-L}
The holonomy of the $G$-gerbe $(\Y_{\Sigma},\tau)$ around the loop $L_j$ at $z_j$ is defined to be
\[
hol(\Y_\Sigma, \tau)(L_j, z_j):=\tau\big(\tilde{g}_{1,j,M}, \tilde{g}_{j,1}\big)\tau\big(h_{j,1,2}, \tilde{g}_{2, j, M}\big)\cdots \tau\big(h_{j,J,1}, \tilde{g}_{1, j, M+1}\big).
\]

\end{definition}
\begin{remark} We remark that Definition \ref{dfn:holonomy-L} also works for a general Lie group. In particular, we will use this definition later for $G=U(1)$.  From a 2-cocycle, we can find a natural connection $\nabla^\tau$ on $\Y_\Sigma$. Our definition agrees with the holonomy defined by $\nabla^\tau$ in \cite{lu} and \cite{pry}. 
\end{remark}

The following lemma asserts that the above definition is independent of the variable $M$.
\begin{lemma}\label{lem:hol-ind-M} For any $M$, 
\[
\begin{split}
&\tau\big(\tilde{g}_{1,j,M}, \tilde{g}_{j,1}\big)\tau\big(h_{j,1,2}, \tilde{g}_{2, j, M}\big)\cdots \tau\big(h_{j,J,1}, \tilde{g}_{1, j, M+1}\big)\\
=&\tau\big(\tilde{g}_{1,j,M+1}, \tilde{g}_{j,1}\big)\tau\big(h_{j,1,2}, \tilde{g}_{2, j, M+1}\big)\cdots \tau\big(h_{j,J,1}, \tilde{g}_{1, j, M+2}\big).
\end{split}
\]
\end{lemma}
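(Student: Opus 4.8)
The plan is to derive everything from the $2$-cocycle condition (\ref{eq:tau-cocycle}) for $\tau$, which---since the gerbe is banded, so that the $\operatorname{Ad}$-term is trivial, and $\tau$ is valued in the abelian group $Z(G)$---reduces to the ordinary relation $\tau(a,b)\,\tau(ab,c)=\tau(b,c)\,\tau(a,bc)$ for every composable triple $(a,b,c)$ in $\mathfrak{Q}_{\widetilde{\Sigma}}$. Throughout I fix $j$ and abbreviate the two kinds of factors appearing in the holonomy by
\[
\beta_s(M):=\tau\big(h_{j,s,s+1},\tilde{g}_{s+1,j,M}\big),\qquad \gamma_s(M):=\tau\big(\tilde{g}_{s,j,M},\tilde{g}_{j,1}\big),
\]
with the cyclic conventions $h_{j,J,J+1}=h_{j,J,1}$ and $\tilde{g}_{J+1,j,M}=\tilde{g}_{1,j,M+1}$, so that the quantity in the lemma is exactly $\gamma_1(M)\prod_{s=1}^{J}\beta_s(M)$.

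First I would record the relevant groupoid compositions from the multiplication rules of Section~\ref{subsec:covering}: namely $h_{j,s,s+1}\,\tilde{g}_{s+1,j,M}=\tilde{g}_{s,j,M}$ (taking one boundary step and then descending to the central disk equals descending directly), and $\tilde{g}_{s,j,M}\,\tilde{g}_{j,1}=\tilde{g}_{s,j,M+1}$ (composing with the generator raises the level by one). Applying the cocycle condition to the composable triple $a=h_{j,s,s+1}$, $b=\tilde{g}_{s+1,j,M}$, $c=\tilde{g}_{j,1}$---whose products are $ab=\tilde{g}_{s,j,M}$, $bc=\tilde{g}_{s+1,j,M+1}$, and $abc=\tilde{g}_{s,j,M+1}$---then yields the ladder identity
\[
\beta_s(M)\,\gamma_s(M)=\gamma_{s+1}(M)\,\beta_s(M+1),\qquad s=1,\dots,J.
\]

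With this in hand the proof is a telescoping computation, legitimate because all factors lie in the abelian group $Z(G)$ (or in $U(1)$ in the line-gerbe version) and therefore commute. Solving the ladder identity as $\beta_s(M)=\gamma_{s+1}(M)\,\beta_s(M+1)\,\gamma_s(M)^{-1}$ and multiplying over $s=1,\dots,J$, the $\gamma$-factors telescope to $\gamma_{J+1}(M)\,\gamma_1(M)^{-1}$, giving
\[
\prod_{s=1}^{J}\beta_s(M)=\gamma_{J+1}(M)\,\gamma_1(M)^{-1}\prod_{s=1}^{J}\beta_s(M+1).
\]
Since the cyclic convention gives $\gamma_{J+1}(M)=\tau(\tilde{g}_{1,j,M+1},\tilde{g}_{j,1})=\gamma_1(M+1)$, it follows that $\gamma_1(M)\prod_s\beta_s(M)=\gamma_1(M)\gamma_{J+1}(M)\gamma_1(M)^{-1}\prod_s\beta_s(M+1)=\gamma_1(M+1)\prod_s\beta_s(M+1)$, which is exactly the claimed equality $P(M)=P(M+1)$.

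The one genuinely delicate point---and the step I would check most carefully---is the cyclic bookkeeping around the full boundary loop: after $J$ boundary steps the starting chart $B_{j,1}$ is revisited with the central level incremented by one, so $\tilde{g}_{J+1,j,M}$ must be read as $\tilde{g}_{1,j,M+1}$ and hence $\gamma_{J+1}(M)=\gamma_1(M+1)$. This identification is precisely what closes the telescope, reflecting that a single circuit of the order-$d_j$ stabilizer advances the sheet by one generator. Verifying that the groupoid relations of Section~\ref{subsec:covering} really produce these composites at $s=J$, with no spurious off-by-one in the level label, is the crux; the remaining manipulations are routine uses of commutativity and the cocycle law.
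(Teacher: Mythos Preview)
Your proof is correct and follows essentially the same route as the paper's: apply the cocycle identity to the composable triple $(h_{j,s,s+1},\tilde g_{s+1,j,M},\tilde g_{j,1})$ to obtain the ladder relation, then multiply over $s=1,\dots,J$ and telescope using the cyclic identification $\tilde g_{J+1,j,M}=\tilde g_{1,j,M+1}$. Your abbreviated notation $\beta_s(M),\gamma_s(M)$ and explicit discussion of the cyclic bookkeeping make the argument cleaner, but the underlying mechanism is identical to the paper's proof.
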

\begin{proof}
In $\mathfrak{Q}_{\Sigma}$, the arrows $h_{j, s, s+1}$, $\tilde{g}_{s+1, j, M}$, $\tilde{g}_{j,1}$ are composable. By the cocycle condition for $\tau$, we have
\[
\tau(h_{j, s,s+1}, \tilde{g}_{s+1, j, M})\tau(\tilde{g}_{s, j, M}, \tilde{g}_{j,1})=\tau(h_{j,s,s+1}, \tilde{g}_{s+1, j, M+1})\tau(\tilde{g}_{s+1, j, M}, \tilde{g}_{j,1}). 
\]
Moving the second term on the left side to the right, we obtain
\[
\tau(h_{j, s,s+1}, \tilde{g}_{s+1, j, M})=\tau(h_{j,s,s+1}, \tilde{g}_{s+1, j, M+1})\tau(\tilde{g}_{s+1, j, M}, \tilde{g}_{j,1})\tau(\tilde{g}_{s, j, M}, \tilde{g}_{j,1})^{-1}. 
\]
Multiplying the above equations for $s=1,...,J$, we 
have
\[
\begin{split}
&\tau\big(h_{j,1,2}, \tilde{g}_{2, j, M}\big)\cdots \tau\big(h_{j,J,1}, \tilde{g}_{1, j, M+1}\big)\\
=&\tau\big(h_{j,1,2}, \tilde{g}_{2, j, M+1}\big)\cdots \tau\big(h_{j,J,1}, \tilde{g}_{1, j, M+2}\big)\tau\big(\tilde{g}_{1,j, M+1}, \tilde{g}_{j,1}\big)\tau(\tilde{g}_{1, j, M}, \tilde{g}_{j,1})^{-1}.
\end{split}
\]
Moving the last term on the right side to the left, we have obtained the desired identity. 
\end{proof}

\begin{theorem}\label{thm:degree} 
Given a banded $G$-gerbe $\Y_\Sigma$ over the orbifold Riemann surface $\Sigma$, the number  orbifold Riemann surfaces $\widetilde{\Sigma}$ together with the structures $p:\widetilde{\Sigma}\to \Sigma$ and $\tilde{f}: \widetilde{\Sigma}\to p^*\Y_\Sigma$ satisfying the following commutative diagram, 
\[
\xymatrix{
{p^*\Y_\Sigma} \ar[d] ^{\pi}\ar[r]^{\tilde{p}}&{\Y_\Sigma}\ar[d]^\pi\\ 
{\widetilde{\Sigma}} \ar@/^/[u] ^{\tilde{f}}\ar[r]^{p}&\Sigma
}
\]
is equal to the cardinality of the quotient of the following set 
\[
\begin{array}{ll}
&\chi(\tau, g, d_1,\cdots, d_n):=\\
&\left\{
\begin{aligned}
&\big(F(x_1), F(y_1),\cdots,F(x_g), F(y_g), F(\tilde{g}_{j,1}), \cdots, F(\tilde{g}_{j, n})\big)\in G^{\times (2g+n)}:\\
&[F(x_1), F(y_1)]\cdots [F(x_g), F(y_g)]\\
&\qquad \qquad \qquad =hol(\Y_\Sigma, \tau)(L_1, z_1)\cdots hol(\Y_\Sigma, \tau)(L_n, z_n) F(\tilde{g}_{n,1})\cdots F(\tilde{g}_{1,1}),\\
&s.t.\ F(\tilde{g}_{1,1})^{m_1d_1} hol(\Y_\Sigma, \tau)(x_1)^{m_1}=1,\cdots,\ F(\tilde{g}_{n,1})^{m_nd_n}hol(\Y_\Sigma, \tau)(x_n)^{m_n}=1
\end{aligned}
\right\}.
\end{array}
\]
by the diagonal conjugacy action of $G$. In the above description, $d_i$ is the order of the stabilizer group of $\Sigma$ at $x_i$ and $m_id_i$ is the order of the stabilizer group of $\widetilde{\Sigma}$ at $x_i$, $i=1,\cdots,n$. 
\end{theorem}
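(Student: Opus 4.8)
The plan is to translate the geometric counting of liftings into a group-theoretic count of twisted groupoid morphisms, and then into a count of tuples in $G^{2g+n}$ modulo conjugation. By Proposition \ref{prop:F-conjugation}, a triple $(\widetilde{\Sigma},p,\tilde{f})$ is the same datum as an equivalence class of maps $F:\mathfrak{Q}_{\widetilde{\Sigma}}\to G$ satisfying the twisted morphism relations (\ref{eq:F}), where two maps are equivalent when they differ by conjugation by a $0$-cochain $\varphi:X\to G$. So I would first replace, once and for all, the left-hand geometric count by the count of such equivalence classes, ranging over all admissible coverings $\widetilde{\Sigma}$ (equivalently, over all local ramification data $m_1,\dots,m_n$).

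First I would restrict $F$ to the subgroupoid $\mathfrak{Q}_{\Sigma^*}$ presenting the punctured surface. After the normalization arranged before Lemma \ref{lem:F(L)}, the cocycle $\tau$ vanishes on $\mathfrak{Q}_{\Sigma^*}$, so by Proposition \ref{prop:sigma*} the restriction is an honest groupoid morphism, i.e.\ a flat $G$-bundle on $\Sigma^*$, recorded by the values $F(x_i),F(y_i),F(L_j)$ obeying the surface relation (\ref{eq:F-relation}). This accounts for the genus-$g$ generators $F(x_i),F(y_i)$ and shows that the only freedom beyond the flat bundle lies in the local behavior at the marked points.

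Next I would carry out the local analysis at each $x_i$. Here $F$ is defined on $\mathfrak{Q}_{\widetilde{\Sigma}}$, whose local generator $\tilde{g}_{i,1}$ has order $m_id_i$, while $p(\tilde{g}_{i,1})=g_{i,1}$ has order $d_i$. Iterating the twisted relation $F(q_1)F(q_2)\tau(q_1,q_2)=F(q_1q_2)$ around this loop, using that $\tau$ is central together with Definition \ref{dfn:holonomy}, the accumulated cocycle over the $m_i$ full turns of $g_{i,1}$ equals $hol(\Y_\Sigma,\tau)(x_i)^{m_i}$; since $\tilde{g}_{i,1}^{m_id_i}$ is a unit and $F$ sends units to $1$, this yields the closing-up condition
\[
F(\tilde{g}_{i,1})^{m_id_i}\,hol(\Y_\Sigma,\tau)(x_i)^{m_i}=1,
\]
which is exactly the constraint appearing in $\chi(\tau,g,d_1,\dots,d_n)$ and simultaneously encodes that $\widetilde{\Sigma}$ is a well-defined orbifold with stabilizer of order $m_id_i$ over $x_i$. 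I would then feed Lemma \ref{lem:F(L)} into (\ref{eq:F-relation}): each $F(L_j)$ equals $hol(\Y_\Sigma,\tau)(L_j,z_j)^{-1}$ times a conjugate of $F(\tilde{g}_{j,1})^{-1}$, so after inverting and using centrality of the holonomy the relation rearranges into
\[
[F(x_1),F(y_1)]\cdots[F(x_g),F(y_g)]=hol(\Y_\Sigma,\tau)(L_1,z_1)\cdots hol(\Y_\Sigma,\tau)(L_n,z_n)\,F(\tilde{g}_{n,1})\cdots F(\tilde{g}_{1,1}).
\]
Finally I would check that the assignment $(\widetilde{\Sigma},p,\tilde{f})\mapsto(F(x_i),F(y_i),F(\tilde{g}_{j,1}))$ is a bijection onto $\chi$ modulo diagonal conjugation: these values generate $F$, conversely any admissible tuple reconstructs a consistent $F$ hence a lifting, and the $0$-cochain equivalence of Proposition \ref{prop:F-conjugation} descends precisely to the diagonal $G$-conjugation on the tuple, with Lemma \ref{lem:hol-ind-M} guaranteeing that the holonomies entering $\chi$ are well-defined.

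The step I expect to be the main obstacle is the passage in the previous paragraph from the conjugated peripheral factors produced by Lemma \ref{lem:F(L)} — each $F(L_j)$ being a $w_j$-conjugate of $hol(\Y_\Sigma,\tau)(L_j,z_j)^{-1}F(\tilde{g}_{j,1})^{-1}$ with $w_j=F(l_j)F(\tilde{g}_{1,j,M+1})$ — to the clean ordered product $F(\tilde{g}_{n,1})\cdots F(\tilde{g}_{1,1})$ appearing in $\chi$. The difficulty is purely bookkeeping: I must show that choosing the standard normal form of the peripheral generators of $\pi_1(\Sigma^*,x_0)$, normalizing $F$ on units, and then selecting representatives for the conjugacy data at the punctures absorbs all of the $w_j$ exactly into the single residual diagonal conjugation, with no spurious over- or under-counting of liftings. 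Getting this matching precisely right — so that the fibers of the resulting map from $\chi$ to the set of liftings are exactly the diagonal $G$-orbits — is where the care is required.
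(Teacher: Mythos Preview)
Your proposal is correct and follows essentially the same route as the paper: translate liftings to equivalence classes of twisted groupoid morphisms via Proposition~\ref{prop:F-conjugation}, read off the flat-bundle data on $\Sigma^*$ via Proposition~\ref{prop:sigma*}, extract the local closing-up condition from iterating (\ref{eq:F}) at each puncture, insert Lemma~\ref{lem:F(L)} into (\ref{eq:F-relation}), and finally build the inverse map from tuples back to $F$.

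One clarification on the step you flag as the main obstacle. The conjugating factors $w_j=F(l_j)F(\tilde{g}_{1,j,M+1})$ are \emph{not} absorbed into the residual diagonal $G$-conjugation; rather, they are killed outright by a local gauge choice. The $0$-cochain $\varphi:X\to G$ in the equivalence relation of Proposition~\ref{prop:F-conjugation} can be chosen independently on each disk $\mathfrak{D}_j$, and this local freedom changes $F(\tilde{g}_{\alpha,j,M})\mapsto F(\tilde{g}_{\alpha,j,M})g^{-1}$ and $F(\tilde{g}_{j,M})\mapsto gF(\tilde{g}_{j,M})g^{-1}$ for any $g\in G$. Using this you may impose $F(\tilde{g}_{1,j,M+1})=F(l_j)^{-1}$ for each $j$, so that $w_j=1$ and $F(L_j)=hol(\Y_\Sigma,\tau)(L_j,z_j)^{-1}F(\tilde{g}_{j,1})^{-1}$ on the nose. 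After these $n$ local normalizations (and the normalization $F(\text{units})=1$), the \emph{remaining} $0$-cochain freedom is precisely a single global constant, which acts as diagonal conjugation on the tuple $\big(F(x_i),F(y_i),F(\tilde{g}_{j,1})\big)$. This is what makes the fibers of your map exactly the diagonal $G$-orbits, with no over- or under-counting.
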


\begin{remark}
Suppose $G$ is abelian. It is easy to see that the defining equation of $\chi(\tau, g, d_1,\cdots, d_n)$ is unchanged when two marked points with opposite orbifold structures are identified to form a node. Thus by induction on the number of nodes, for a {\em nodal} orbifold Riemann surface $\Sigma$, if a lifting $\Sigma\overset{p}{\longleftarrow} \widetilde{\Sigma}\to p^*\Y_\Sigma$ exists, then the defining equation of $\chi(\tau, g, d_1,\cdots, d_n)$ also holds.
\end{remark}

\begin{proof}
Lemma \ref{lem:F(L)} and Definition \ref{dfn:holonomy-L} give the following expression of $F(L_j)$
\[
F(L_j)=hol(\Y_\Sigma, \tau)(L_j, z_j)^{-1}F(l_j) F\big(\tilde{g}_{1, j, M+1}\big)F\big(\tilde{g}_{j,1}\big)^{-1} \Big(F(l_j)F\big(\tilde{g}_{1,j,M+1}\big)\Big)^{-1}.
\]
Around $x_j$, the equivalence relation in the definition of the map $F: \mathfrak{Q}_{\widetilde{\Sigma}}\to G$ allows to change $F(\tilde{g}_{i,M})$, $F(\tilde{g}_{\alpha, i, M})$, and $\tilde{g}_{i, M, \alpha}$ by
\[
F(\tilde{g}_{i,M})\cong g F(\tilde{g}_{i,M})g^{-1},\ F(\tilde{g}_{\alpha, i, M})\cong F(\tilde{g}_{\alpha, i, M})g^{-1},\ F(\tilde{g}_{i, M, \alpha})\cong g F(\tilde{g}_{i,M, \alpha}),
\]
for $g\in G$. As a corollary, we can choose $F(\tilde{g}_{1,j,M+1})$ to be equal to $F(l_j)^{-1}$, and conclude that
\[
F(L_j)=hol(\Y_\Sigma, \tau)(L_j, z_j)^{-1}F(\tilde{g}_{j, 1})^{-1}. 
\]
Inserting the above expression of $F(L_j)$ into Eq. (\ref{eq:F-relation}), we conclude the following equation of 
\[
[F(x_1), F(y_1)]\cdots [F(x_g), F(y_g)]=hol(\Y_\Sigma, \tau)(L_1, z_1)\cdots hol(\Y_\Sigma, \tau)(L_n, z_n) F(\tilde{g}_{n,1})\cdots F(\tilde{g}_{1,1}). 
\]

Using Eq. (\ref{eq:F}), we compute $F(\tilde{g}_{i,1})^2$ to be 
\[
\tau(\tilde{g}_{i,1},\tilde{g}_{i,1} )^{-1}F(\tilde{g}_{i, 2}). 
\]
Repeat the above computation, we get by using the fact $\tilde{g}_{i,1}^{m_id_i}=1$, 
\[
\begin{split}
F(\tilde{g}_{i,1})^{m_id_i}&=\tau(\tilde{g}_{i,1}, \tilde{g}_{i,1})^{-1}\cdots \tau(\tilde{g}_{i, m_id_i-1}, \tilde{g}_{i, 1})^{-1}=\big(\tau(\tilde{g}_{i,1}, \tilde{g}_{i,1})\cdots \tau(\tilde{g}_{i, d_i-1}, \tilde{g}_{i,1})\big)^{-m_i}\\
&=hol(\Y_\Sigma, \tau)(x_i)^{-m_i}. 
\end{split}
\]
Therefore, we conclude that the map $F: \mathfrak{Q}_{\widetilde{\Sigma}}\to G$ associated to the map $\tilde{f}: \widetilde{\Sigma}\to p^*\Y_\Sigma$, up to an equivalence in the definition of $F(\tilde{g}_{1,j,1})$, satisfies the conditions in the definition of $\chi(\tau, g,d_1,\cdots, d_n)$. It is not hard to check that if $F_1$ and $F_2$ are equivalent, then the corresponding  $F(x_i)$, $F(y_i)$, and $F(\tilde{g}_{j, 1})$ are changed by conjugation with respect to a same element $g$ in $G$. Therefore, we have constructed a natural map $H$ from the set of equivalent classes of maps of $F:\mathfrak{Q}_{\widetilde{\Sigma}}\to G$ to the set 
\[
\chi(\tau, g,d_1,\cdots, d_n)/G.
\]

Conversely, let $\big(F(x_1),F(y_1),\cdots, F(x_g), F(y_g), F(\tilde{g}_{1, 1}), \cdots, F(\tilde{g}_{n,1})\big)$ be a set of choices satisfying the conditions in $\chi(\tau, g, d_1,\cdots, d_n)$. Set 
\[
F(L_j):=hol(\Y_\Sigma, \tau)(L_j, z_j)^{-1} F(\tilde{g}_{j,1})^{-1}. 
\]
Then the data $\big(F(x_1), F(y_1), \cdots, F(x_g), F(y_g), F(L_1), \cdots, F(L_n)\big)$ satisfy the following equation, 
\[
[F(x_1), F(y_1)]\cdots [F(x_g), F(y_g)]F(L_1)\cdots F(L_n)=1. 
\]
Over the punctured smooth surface $\Sigma^*$, the choices of $F(x_i)$, $F(y_i)$, and $F(L_j)$ uniquely determine an isomorphism class of principal $G$-bundles over $\Sigma^*$, and by Proposition \ref{prop:sigma*} also uniquely determine a map $\tilde{f}$ from $\Sigma^*$ to $\Y_{\Sigma^*}:=(\Y_\Sigma )|_{\Sigma^*} $ up to equivalence. Using the definition of $F$ on $\Sigma^*$, we define $F(l_j)$, $j=1,\cdots, n$, following Definition \ref{dfn:F-fundamental}.  Set $F(\tilde{g}_{1,j,1})$ to be $F(l_j)^{-1}$. Following Eq. (\ref{eq:F}), we set 
\[
\begin{split}
 F\big(\tilde{g}_{s+1, j,M}\big):=&F\big(h_{j,s,s+1}\big)^{-1}F\big(\tilde{g}_{s,j, M}\big)\tau\big(h_{j,s,s+1}, \tilde{g}_{s+1, j, M}\big)^{-1},\\
  F\big(\tilde{g}_{j,M,s+1}\big):=&F\big(\tilde{g}_{j, M,s}\big)\tau\big(\tilde{g}_{j, M,s+1},h_{j,s+1,s}\big)^{-1}F\big(h_{j,s+1,s}\big)^{-1}.
\end{split}
\]
It is straightforward to check that the above $F\big(\tilde{g}_{s+1, j,M}\big)$, $ F\big(\tilde{g}_{j,M,s+1}\big)$, and $F\big(\tilde{g}_{j,1}\big)$ define an extension of $F$ to a map $\widetilde{F}:\mathfrak{Q}_{\widetilde{\Sigma}}\to G$ that induces a groupoid morphism $\tilde{f}: \mathfrak{Q}_{\widetilde{\Sigma}}\to \mathfrak{H}_{\widetilde{\Sigma}}$.  
It is not hard to check that if the choices of $(F(x_i), F(y_i), F(\tilde{g}_{j,1}))$ are changed by a conjugation of $g$ in $G$, the corresponding map $\tilde{f}$ is changed under equivalence. Therefore, we have constructed  a map 
$E$ from $\chi(\tau, g, d_1,\cdots, d_n)/G$ to the set of diffeomorphism classes of orbifold Riemann surfaces $\widetilde{\Sigma}$ with the desired properties in the statement of theorem. 

With the definitions of $H$ and $E$, it is not difficult to check that they are inverse to each other. Therefore we can compute the number of orbifold Riemann surfaces $\widetilde{\Sigma}$ with the covering map $p:\widetilde{\Sigma}\to \Sigma$ and the lifting map $\tilde{f}: \widetilde{\Sigma}\to p^*\Y_\Sigma$ by the cardinality of the set $\chi(\tau, g, d_1,\cdots, d_n)/G.$
\end{proof} 

\subsection{A combinatorial degree formula} 
Fix a stratum of $\Mbar_{g,n}(\Y,\beta)$ by the fixed point data $\LL=(\mathfrak{l}_1,\cdots, \mathfrak{l}_n)$. Let $\pi(\LL)$ be the corresponding fixed point data for $\Mbar_{g,n}(\B,\beta)$. We provide in this subsection a formula computing the degree of the map 
\[
\mathcal{M}_{g,n}(\Y,\beta, \LL)\longrightarrow \mathcal{M}_{g,n}(\B,\beta, \pi(\LL)),
\]
where $\M\subset \Mbar$ parametrizes stable maps with nonsingular domains. The space $\Mbar_{g,n}(\Y, \beta)$ parametrizes orbifold stable maps {\em with sections to all marked gerbes}, see \cite{agv1}. Following \cite{av} (with a small change of notations), denote by $$\overline{\mathcal{K}}_{g,n}(\Y, \beta)$$ the moduli space of $n$-pointed genus $g$ degree $\beta$ stable maps to $\Y$ {\em without} requiring that sections to marked gerbes exist. Forgetting sections gives a map 
$$\Mbar_{g,n}(\Y, \beta)\to \overline{\mathcal{K}}_{g,n}(\Y, \beta),$$ 
which exhibits $\Mbar_{g,n}(\Y, \beta)$ as the fiber product of the $n$ universal marked gerbes over $\overline{\mathcal{K}}_{g,n}(\Y, \beta)$. Restricting to $\Mbar_{g,n}(\Y,\beta, \LL)$ gives a map 
$$\Mbar_{g,n}(\Y, \beta, \LL)\to \overline{\mathcal{K}}_{g,n}(\Y, \beta, \LL).$$ 
This map has degree $1/w(\LL)$, where $w(\LL)$ is the product of orders of orbifold structures at marked points. Similar discussions apply to $\Mbar_{g,n}(\B, \beta)$, giving a map 
$$\Mbar_{g,n}(\B, \beta, \pi(\LL))\to \overline{\mathcal{K}}_{g,n}(\B, \beta, \pi(\LL))$$
 of degree $1/w(\pi(\LL))$. These maps fit into a commutative diagram
\begin{equation*}
\xymatrix{
\Mbar_{g,n}(\Y, \beta, \LL)\ar[r]\ar[d] & \Mbar_{g,n}(\B, \beta, \pi(\LL))\ar[d]\\
\overline{\mathcal{K}}_{g,n}(\Y, \beta, \LL)\ar[r] & \overline{\mathcal{K}}_{g,n}(\B, \beta, \pi(\LL)).
}
\end{equation*}
The degree of the top map is the degree of the bottom map times the factor $w(\pi(\LL))/w(\LL)$. In what follows we compute the (local) degree of $$p_{\LL}: \mathcal{K}_{g,n}(\Y, \beta, \LL)\to \mathcal{K}_{g,n}(\B, \beta, \pi(\LL)),$$ where $\mathcal{K}\subset \overline{\mathcal{K}}$ parametrizes stable maps with nonsingular domains. 

Let $\Y_y$ be the stabilizer group at $y\in \Y$, and $\B_{\pi(y)}$ be the stabilizer group of  $\pi(y)\in \B$. $\Y_y$ is an extension of $\B_{\pi(y)}$ by $G$ via the $
Z(G)$-valued 2-cocycle $\tau|_{\pi(y)}$, the restriction of $\tau$ on $\B_{\pi(y)}$. Let $\mathfrak{l}$ be a conjugacy class of $\Y_y$ and $\pi(\mathfrak{l})$ be the corresponding conjugacy class of $\B_{\pi(y)}$. As $\Y_y$ is a central extension of $\B_{\pi(y)}$, we have the following description of $\mathfrak{l}$.
\begin{lemma}\label{lem:conjugacy} There are conjugacy classes $(g_1), \cdots, (g_k)$ of the group $G$ such that the conjugacy class $\mathfrak{l}$ is of the following form 
\[
\{(g, q)| q\in \pi(\mathfrak{l}), g\in (g_i), i=1,\cdots, k.\}
\]
\end{lemma}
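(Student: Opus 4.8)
The plan is to read off the conjugacy classes of $\Y_y$ directly from the group law $(g_1,q_1)(g_2,q_2)=(g_1g_2\,\tau(q_1,q_2),q_1q_2)$ on $\mathfrak{H}=G\times_\tau\mathfrak{Q}$, restricted to the stabilizer $\Y_y$, exploiting the two structural features of a banded extension: the subgroup $G=G\times\{1\}$ is normal with quotient $\B_{\pi(y)}$, and the cocycle $\tau|_{\pi(y)}$ takes values in $Z(G)$. The starting point is the conjugation formula, which conjugates $(g,q)$ by $(h,r)$ as
\[
(h,r)(g,q)(h,r)^{-1}=\big(h g h^{-1}\,z(r,q),\ rqr^{-1}\big),\qquad z(r,q):=\tau(r,q)\,\tau(rq,r^{-1})\,\tau(r,r^{-1})^{-1}.
\]
Because $\tau$ is $Z(G)$-valued, $z(r,q)$ is a central element and is independent of $g$ and $h$. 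Two special cases drive the whole argument: taking $r=1$ shows that conjugation by $(h,1)$ fixes the $\B_{\pi(y)}$-coordinate and acts on the $G$-coordinate by the inner automorphism $g\mapsto hgh^{-1}$ (this is the triviality of the band, i.e.\ the lifts $(1,q)$ centralize $G$); taking $h=1$ shows that conjugation by $(1,r)$ sends $(g,q)\mapsto(g\,z(r,q),\,rqr^{-1})$.

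With this formula in hand I would assemble the description in three steps. First, applying the quotient homomorphism $\pi\colon\Y_y\to\B_{\pi(y)}$ to the class $\mathfrak{l}$ and using that a homomorphism carries a conjugacy class onto a conjugacy class identifies the set of $\B_{\pi(y)}$-coordinates occurring in $\mathfrak{l}$ with a single class $\pi(\mathfrak{l})$; hence every element of $\mathfrak{l}$ has the form $(g,q)$ with $q\in\pi(\mathfrak{l})$. Second, fixing $q\in\pi(\mathfrak{l})$, the fiber $S_q:=\{g\in G:(g,q)\in\mathfrak{l}\}$ is closed under $g\mapsto hgh^{-1}$ for every $h\in G$ by the $r=1$ case, so $S_q$ is a union of $G$-conjugacy classes $(g_1),\dots,(g_k)$. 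Third, for $q'=rqr^{-1}$ the $h=1$ case identifies the two fibers by $S_{q'}=z(r,q)\,S_q$; since $z(r,q)$ is central, multiplication by it carries $G$-conjugacy classes to $G$-conjugacy classes, so each fiber remains a union of $G$-conjugacy classes.

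The step that carries the genuine content, and the one I expect to be the main obstacle, is the third: controlling the central twist $z(r,q)$ as $q$ ranges over $\pi(\mathfrak{l})$. This factor is exactly the holonomy-type contribution that reappears in Definition \ref{dfn:holonomy} and Lemma \ref{lem:hol}, so it must be tracked carefully. The cocycle identity $z(r_1r_2,q)=z(r_1,r_2qr_2^{-1})\,z(r_2,q)$ (again a consequence of centrality) shows that for $r$ in the centralizer of $q$ the twist $z(r,q)$ stabilizes $S_q$, so the self-consistency of $\mathfrak{l}$ over a single base point is automatic; the remaining bookkeeping is to pin down how the collection of $G$-classes over $q'$ is related to the collection over $q$ by the central shift $z(r,q)$, which is precisely where the hypothesis $\tau\in Z^2(\mathfrak{Q},Z(G))$ is essential (for a non-central band the fibers would fail to be unions of $G$-conjugacy classes).

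Once this analysis of $z(r,q)$ is in place, combining the three steps yields the stated fibered description of $\mathfrak{l}$ over $\pi(\mathfrak{l})$, with the $G$-conjugacy classes $(g_1),\dots,(g_k)$ being exactly those occurring in the fiber. This is the collection $\mathsf{l}$ of conjugacy classes of $G$ that is then fed into the degree computation of Lemma \ref{lem:degree}, where the central factor $z(r,q)$ reappears as the holonomy $c$ constraining the count $\#\{(g_1)\in\mathsf{l}_1,\dots,(g_n)\in\mathsf{l}_n:\ g_1\cdots g_n c=1\}$.
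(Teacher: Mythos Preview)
Your approach is essentially the paper's. Both compute the conjugation
\[
(h,r)(g,q)(h,r)^{-1}=\big(hgh^{-1}\,z(r,q),\ rqr^{-1}\big),\qquad z(r,q)=\tau(r,q)\,\tau(rq,r^{-1})\,\tau(r,r^{-1})^{-1}\in Z(G),
\]
and read off the structure of $\mathfrak{l}$ from it. The paper is simply more compact: it fixes a representative $(g_0,q_0)\in\mathfrak{l}$, rewrites the central twist as an ``action'' $\Ad_q^\tau(g):=z(q,q_0)\,g$ of $\B_{\pi(y)}$ on the set $G$, takes $\{g_1,\dots,g_k\}$ to be the orbit of $g_0$ under this action, and observes from $(g,q)(g_0,q_0)(g,q)^{-1}=\big(g\,\Ad_q^\tau(g_0)\,g^{-1},\,qq_0q^{-1}\big)$ that every element of $\mathfrak{l}$ has $G$-coordinate in one of the $(g_i)$. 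Your three-step fiber analysis unpacks exactly this computation, with the $r=1$ and $h=1$ special cases isolating the two ingredients already present in the single formula.

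The ``main obstacle'' you flag---tracking how the collection of $G$-classes in the fiber shifts as the base point $q$ varies---does not receive separate treatment in the paper. By declaring the $(g_i)$ to be the classes of \emph{all} orbit elements $\{z(q,q_0)g_0:q\in\B_{\pi(y)}\}$ at once, the paper's conclusion follows immediately from the conjugation formula with no further bookkeeping. Your cocycle identity $z(r_1r_2,q)=z(r_1,r_2qr_2^{-1})\,z(r_2,q)$ is precisely what underlies the paper's remark that $\Ad_q^\tau$ is an action, but beyond that it is not invoked. So your outline is correct and matches the paper; you are anticipating more work in the third step than the argument actually requires.
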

\begin{proof}
Let $(g_0, q_0)$ be an element of $\mathfrak{l}$. As the map $\pi: \Y_y\to \B_{\pi(y)}$ is a group morphism, the image of $\pi(\mathfrak{l})$ is the conjugacy class $(q_0)$ of $q_0$ in $\B_{\pi(y)}$. 

Consider the action of $\B_{\pi(y)}$ on $G$ by 
\[
\Ad_q ^\tau(g):=\tau_{\pi(y)}(q, q_0)\tau_{\pi(y)}(qq_0, q^{-1}) \tau_{\pi(y)}(q, q^{-1})^{-1}g. 
\]
By the cocycle condition of $\tau_{\pi(y)}$, it is not difficult to check that $\Ad_q^\tau$ is an action of $\B_{\pi(y)}$ on $G$.  Let $\{g_1,\cdots, g_k\}$ be the orbit of $g_0$ with respect to the action $\Ad_q^\tau$ of the group $\B_{\pi(y)}$. 

Consider an arbitrary $(g,q)$ in $\Y_y$. Then $(g,q)(g_0, q_0)(g,q)^{-1}$ can be computed as
\[
(g,q)(g_0, q_0)(g,q)^{-1}=(g,1)(1,q)(g_0, q_0)(1,q)^{-1}(g^{-1},1)=\big(g\Ad^\tau_q(g_0)g^{-1}, qq_0q^{-1}\big). 
\]
The element $g\Ad^\tau_q(g_0)g^{-1}$ belongs to the conjugacy class of one of the $g_1, \cdots, g_k$. This completes the proof of the lemma. 
\end{proof}

For  a group element $h=(g,q)\in \Y_y$, we use $h_G$ to denote the element $g\in G$.
Lemma \ref{lem:conjugacy} suggests the following definition. 
\begin{definition}\label{dfn:conjugacy} Let $\mathfrak{l}_y$ be a conjugacy class of the stabilizer group $\Y_y$ of $\Y$ at $y$. Let $\mathsf{l}_y$ be the collection of conjugacy classes $(g_i)$ of $G$ such that $(g_i)\times \pi(\mathfrak{l}_y)$ is contained inside $\mathfrak{l}_y$. 
\end{definition}

Denote $(c_1, \cdots, c_n)\in Z(G)^{\times n}$ by $\vec{c}$ and $(h_1, \cdots, h_n)\in G^{\times n}$ by $\vec{h}$. By Theorem \ref{thm:degree}, to compute the degree for the map $p_{{\LL}}$, we need to consider the following set 
\[
\chi\big(\tau, g, \vec{c}, \vec{d}, \vec{h}, (g_1), \cdots, (g_n)\big):=\left\{ 
\begin{aligned}
\big(\alpha_1, \beta_1,& \cdots,\alpha_g, \beta_g, \sigma_1, \cdots, \sigma_n\big)
\in G^{\times (2g+n)}\\
&\Big| [\alpha_1, \beta_1]\cdots [\alpha_g, \beta_g]=\prod_{i=1}^n c_i \sigma_i,\ \sigma_i\in (g_i),\\
& \sigma_1^{m_1d_1}=h_1 ^{m_1},\cdots, \sigma_n^{m_nd_n}=h_n^{m_n}
\end{aligned}
\right\}.
\]

\begin{lemma}\label{lem:union} Let $\LL=(\mathfrak{l}_1, \cdots, \mathfrak{l}_n)$ be the fixed point data of $\Y$ at $n$-points $(y_1, \cdots, y_n)$.  Let $d_i$ be the order of $\pi(\mathfrak{l}_i)$ for $i=1, \cdots, n$.  Then the degree of $p_{\LL}$ is the cardinality of the following union 
\[
\bigcup _{(g_1)\in \mathsf{l}_1, \cdots, (g_n)\in \mathsf{l}_n}
 \chi\Big(\tau, g, \big(hol(\Y, \tau)(L_i, y_i)\big), \vec{d}, \big(hol(\Y, \tau)(y_i)^{-1}\big), (g_{1}), \cdots, (g_{n})\Big)\bigg/G.
\] 
\end{lemma}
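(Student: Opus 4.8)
The plan is to compute the degree of $p_{\LL}$ fiberwise and then invoke Theorem \ref{thm:degree} together with Lemma \ref{lem:conjugacy}. Fix a generic point $[f\colon \Sigma\to \B]$ of $\mathcal{K}_{g,n}(\B,\beta,\pi(\LL))$. The fiber of $p_{\LL}$ over $[f]$ consists of the orbifold stable maps $[\tilde{f}\colon \widetilde{\Sigma}\to \Y]$ lying over $f$ with fixed point data $\LL$; by the reduction summarized in diagram \eqref{eq:lifiting} and the pullback $\Y_\Sigma=f^*\Y$ discussed afterwards, these are precisely the coverings $p\colon \widetilde{\Sigma}\to \Sigma$ equipped with a section $\tilde{f}\colon \widetilde{\Sigma}\to p^*\Y_\Sigma$, subject to the orbifold orders and the stabilizer conjugacy classes prescribed by $\LL$. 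First I would record that $\LL$ fixes these orders: $d_i$ is the order of $\pi(\mathfrak{l}_i)$, and the common order $m_id_i$ of the elements of $\mathfrak{l}_i$ is the isotropy order of $\widetilde{\Sigma}$ at $\tilde{x}_i$, so that $m_i$ is determined. Theorem \ref{thm:degree}, applied with these orders, already counts all such liftings as the cardinality of $\chi(\tau,g,d_1,\dots,d_n)/G$.

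The next step is to impose the conjugacy-class constraint carried by $\LL$. At the $i$-th marked point the stabilizer element of the orbifold stable map to $\Y$ is the image under $\tilde{f}$ of the canonical generator of the isotropy group $\integers_{m_id_i}$ of $\widetilde{\Sigma}$ at $\tilde{x}_i$, namely $\tilde{f}(\tilde{g}_{i,1})=(F(\tilde{g}_{i,1}),g_{i,1})\in \mathfrak{H}_{y_i}=\Y_{y_i}$, and the fixed point datum requires this element to lie in the conjugacy class $\mathfrak{l}_i$. Here $g_{i,1}$ generates $\B_{\pi(y_i)}$ and hence automatically lies in $\pi(\mathfrak{l}_i)$. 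By Lemma \ref{lem:conjugacy}, $\mathfrak{l}_i=\{(g,q)\mid q\in \pi(\mathfrak{l}_i),\ g\in (g_{i,j}),\ j=1,\dots,k_i\}$, so $(F(\tilde{g}_{i,1}),g_{i,1})\in \mathfrak{l}_i$ if and only if $F(\tilde{g}_{i,1})$ lies in one of the conjugacy classes $(g_{i,j})$, that is, in a class belonging to the collection $\mathsf{l}_i$ of Definition \ref{dfn:conjugacy}.

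With this translation in hand, I would match parameters. Writing $\alpha_i=F(x_i)$, $\beta_i=F(y_i)$, $\sigma_i=F(\tilde{g}_{i,1})$, $c_i=hol(\Y,\tau)(L_i,y_i)$ and $h_i=hol(\Y,\tau)(y_i)^{-1}$, and pulling the central holonomies $c_i\in Z(G)$ out of the surface relation of Theorem \ref{thm:degree}, the monodromy equation becomes $[\alpha_1,\beta_1]\cdots[\alpha_g,\beta_g]=\prod_{i=1}^n c_i\sigma_i$, while the marked-point relation $F(\tilde{g}_{i,1})^{m_id_i}hol(\Y,\tau)(y_i)^{m_i}=1$ becomes $\sigma_i^{m_id_i}=h_i^{m_i}$ (a routine reindexing absorbs the reversal of the order of the $\sigma_i$). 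Together with the membership $\sigma_i\in (g_i)$ just obtained, these are exactly the defining relations of $\chi(\tau,g,\vec{c},\vec{d},\vec{h},(g_1),\dots,(g_n))$. Since the equivalences among the maps $F$ are implemented by the diagonal $G$-conjugation, the refined count is $\chi(\dots)/G$; and since distinct choices $(g_i)\in \mathsf{l}_i$ pick out disjoint conjugacy classes, summing over all such choices gives the degree of $p_{\LL}$ as the cardinality of the disjoint union displayed in the statement.

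The main obstacle I anticipate is the second step: pinning down the stabilizer element of the orbifold stable map at each marked point and proving that the constraint ``its class equals $\mathfrak{l}_i$'' is equivalent to ``$F(\tilde{g}_{i,1})\in \mathsf{l}_i$''. This requires tracking how the gerbe band sits inside the isotropy $\Y_{y_i}$ relative to $\B_{\pi(y_i)}$, using the central-extension structure of $\Y_{y_i}$ over $\B_{\pi(y_i)}$ and Lemma \ref{lem:conjugacy}; the remaining bookkeeping---the orders $m_i,d_i$, the ordering and reindexing of the $\sigma_i$, and the identification of the equivalences with $G$-conjugation---is routine once Theorem \ref{thm:degree} is in place.
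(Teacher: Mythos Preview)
Your proposal is correct and follows essentially the same route as the paper: compute the degree of $p_{\LL}$ fiberwise as a count of liftings, invoke Theorem \ref{thm:degree} to express this count as the cardinality of $\chi(\tau,g,d_1,\dots,d_n)/G$, and then use Lemma \ref{lem:conjugacy} to translate the constraint ``stabilizer element lies in $\mathfrak{l}_i$'' into ``$F(\tilde{g}_{i,1})$ lies in some $(g_i)\in\mathsf{l}_i$'', yielding the displayed union. The paper's own proof is terser but rests on exactly these two ingredients; your parameter matching and the explicit identification of the stabilizer element $(F(\tilde{g}_{i,1}),g_{i,1})$ are simply a more detailed unpacking of what the paper compresses into a single citation of Lemma \ref{lem:conjugacy}.
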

\begin{proof}
By Theorem \ref{thm:degree}, the degree of the map $p_{\LL}$ is the set of diffeomorphism classes of orbifold Riemann surfaces $\widetilde{\Sigma}$ together with the structures $p:\widetilde{\Sigma}\to \Sigma$ with the structures $p: \widetilde{\Sigma}\to \Sigma$ and $\tilde{f}:\widetilde{\Sigma}\to p^*\Y_\Sigma$, and the stabilizer group of $\Y_\Sigma$ at the $i$-th marked point is a cyclic group of order $d_i$ generated by an element in $(g_{ij_i})$ for $j_i=1, \cdots, K_i$ and all possible $d_i$. 

By Lemma \ref{lem:conjugacy}, $\mathfrak{l}_i$ is the union of $(g_{ij_i})\times \pi(\mathfrak{l}_i)$, $j_i=1, \cdots, K_i$ for $(g_{ij_i})\in \mathsf{l}_i$. And Theorem \ref{thm:degree} implies that the above set of diffeomorphism classes is isomorphic to  the following union 
\[
\bigcup _{(g_{1j_1})\in \mathsf{l}_1, \cdots, (g_{nj_n})\in \mathsf{l}_n}
 \chi\Big(\tau, g, \big(hol(\Y, \tau)(L_i, y_i)\big), \vec{d}, \big(hol(\Y, \tau)(y_i)^{-1}\big), (g_{1j_1}), \cdots, (g_{nj_n})\Big)\bigg/G.
\] 
\end{proof}

Define the following set 
\[
\chi^G_{g,c}\big((g_1), \cdots, (g_n)\big):=\left\{ 
\begin{aligned}
\big(\alpha_1, \beta_1,& \cdots,\alpha_g, \beta_g, \sigma_1, \cdots, \sigma_n\big)
\in G^{\times (2g+n)}\\
&\Big| [\alpha_1, \beta_1]\cdots [\alpha_g, \beta_g]=c\prod_{i=1}^n \sigma_i,\ \sigma_i\in (g_i)
\end{aligned}
\right\}.
\]
Similar to the above discussion, the group $G$ acts on $\Omega\big(\tau, g, \vec{c}, (g_1), \cdots, (g_n)\big)$ by conjugation. 
 
\begin{proposition}\label{prop:local-holonomy}
Let $\LL=(\mathfrak{l}_1, \cdots, \mathfrak{l}_n)$ be the fixed point data of $\Y$ at $n$-points $(y_1, \cdots, y_n)$. Then the degree of $p_{\LL}$ is the cardinality of 
\[
\bigcup _{(g_1)\in \mathsf{l}_1, \cdots, (g_n)\in \mathsf{l}_n}
\chi^G_{g, c}\Big((g_{1}), \cdots, (g_{n})\Big)\bigg/ G, 
\]
where $c=\prod_{i=1}^n hol(\Y, \tau)(L_i, y_i)$.
\end{proposition}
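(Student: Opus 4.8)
The plan is to derive the proposition directly from Lemma \ref{lem:union} by showing that, for each fixed choice of conjugacy classes $(g_1)\in \mathsf{l}_1, \dots, (g_n)\in \mathsf{l}_n$, the set
\[
\chi\Big(\tau, g, \big(hol(\Y, \tau)(L_i, y_i)\big), \vec{d}, \big(hol(\Y, \tau)(y_i)^{-1}\big), (g_1), \dots, (g_n)\Big)
\]
coincides, as a subset of $G^{\times(2g+n)}$, with $\chi^G_{g,c}\big((g_1), \dots, (g_n)\big)$, where $c=\prod_{i=1}^n hol(\Y, \tau)(L_i, y_i)$. Granting this identification, applying the quotient by the diagonal conjugation action of $G$ and taking the union over $(g_i)\in \mathsf{l}_i$ turns the formula of Lemma \ref{lem:union} into the asserted one. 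There are exactly two differences between the defining conditions of the two sets, and I would dispose of each in turn. Note that $\chi\subseteq\chi^G_{g,c}$ is automatic, since $\chi$ is cut out by strictly more conditions; the content is the reverse inclusion.

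First, the global relation. In $\chi$ the right-hand side reads $\prod_{i=1}^n c_i\sigma_i$ with $c_i=hol(\Y,\tau)(L_i,y_i)$, whereas in $\chi^G_{g,c}$ it reads $c\prod_{i=1}^n\sigma_i$. Each holonomy $hol(\Y,\tau)(L_i,y_i)$ lies in $Z(G)$, being a product of values of the $Z(G)$-valued cocycle $\tau$ by Definition \ref{dfn:holonomy-L}; hence the factors $c_i$ are central and may be collected to the front, giving $\prod_i c_i\sigma_i=\big(\prod_i c_i\big)\big(\prod_i\sigma_i\big)=c\prod_i\sigma_i$. Thus the two relations are literally the same equation.

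Second, and this is the main point, I must show that the local order constraints $\sigma_i^{m_id_i}=h_i^{m_i}$ (with $h_i=hol(\Y,\tau)(y_i)^{-1}$) present in $\chi$ are satisfied automatically by every $\sigma_i$ lying in the conjugacy class $(g_i)$, so that dropping them, as in $\chi^G_{g,c}$, changes nothing. This is a central-extension computation inside the stabilizer group $\Y_{y_i}$. Because $(g_i)\in\mathsf{l}_i$, Lemma \ref{lem:conjugacy} provides an element $(g_i,q_i)\in\mathfrak{l}_i$ with $q_i$ generating $\pi(\mathfrak{l}_i)$, of order $d_i$, whose order in $\Y_{y_i}$ equals $m_id_i$. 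Using the multiplication $(a,q)(b,q')=(ab\,\tau(q,q'),qq')$ together with the centrality of $\tau$, an induction gives $(g_i,q_i)^{k}=\big(g_i^{k}\prod_{j=1}^{k-1}\tau(q_i^{j},q_i),q_i^{k}\big)$; at $k=d_i$ this reads $(g_i,q_i)^{d_i}=\big(g_i^{d_i}\,hol(\Y,\tau)(y_i),e\big)$, where the accumulated $\tau$-product is exactly the holonomy of Definition \ref{dfn:holonomy}. Raising to the $m_i$-th power, and using $\tau(e,e)=1$ together with the centrality of $hol(\Y,\tau)(y_i)$, yields $(g_i,q_i)^{m_id_i}=\big(g_i^{m_id_i}\,hol(\Y,\tau)(y_i)^{m_i},e\big)$. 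Setting this equal to the identity (its order is $m_id_i$) gives precisely $g_i^{m_id_i}=hol(\Y,\tau)(y_i)^{-m_i}=h_i^{m_i}$, which is the required constraint for $\sigma_i=g_i$. Finally, since $h_i\in Z(G)$, the identity $\sigma_i^{m_id_i}=h_i^{m_i}$ is invariant under replacing $\sigma_i$ by any conjugate, so it holds for all $\sigma_i\in(g_i)$.

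With both discrepancies resolved, $\chi=\chi^G_{g,c}$ for each tuple $(g_1),\dots,(g_n)$, and the proposition follows from Lemma \ref{lem:union}. The one place demanding care is the order computation in $\Y_{y_i}$: one must track the $\tau$-factors accumulated in forming $(g_i,q_i)^{d_i}$ and correctly identify their product with $hol(\Y,\tau)(y_i)$, in a manner consistent with the computation of $F(\tilde g_{i,1})^{m_id_i}$ already carried out in the proof of Theorem \ref{thm:degree}.
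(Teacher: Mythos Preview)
Your proposal is correct and follows essentially the same approach as the paper's proof. The paper condenses your second step into a single sentence---``there is a unique $m_i$ such that $\sigma_i^{m_id_i}hol(\Y,\tau)(y_i)^{m_i}=1$, because $m_id_i$ is the order of $(\sigma_i,\pi(\mathfrak{l}_i))$ in the finite stabilizer group $\Y_{y_i}$''---whereas you unpack this order computation explicitly via the iterated product $(g_i,q_i)^{d_i}=(g_i^{d_i}\,hol(\Y,\tau)(y_i),e)$ and then pass to the $m_i$-th power; your treatment of the centrality of the holonomies and the conjugation-invariance of the order constraint is exactly what is implicit in the paper's one-line appeal to finiteness of $\Y_{y_i}$.
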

\begin{proof} 
By Lemma \ref{lem:union}, it is enough to prove that given any $\sigma_i\in (g_{ij_i})$, there is a unique $m_i$ such that 
\[
\sigma_i ^{m_i d_i}hol(\Y, \tau)(y_i)^{m_i}=1. 
\]
This follows from the fact that $m_id_i$ is the order of the element $(\sigma_i, \pi(\mathfrak{l}_i))$ in the finite stabilizer group $\Y_{y_i}$ of $\Y$ at $y_i$. 
\end{proof}

The cardinality of the set 
\[
\chi^G_{g,c} \big(  (g_1), \cdots, (g_n)\big) \bigg / G
\]
is computed by Eq. (\ref{eqn:formula_omega_c}) to be
\[
\begin{split}
\Omega^G_{g,c}\big(  (g_1), \cdots, (g_n)\big)&:=\frac{1}{|G|}\vert \chi^G_{g,c} \big(  (g_1), \cdots, (g_n)\big)\vert\\
&=\sum_{\alpha \in \widehat{G}}\frac{1}{\dim(V_\alpha)^n}\left(\frac{\text{dim}\, V_\alpha}{|G|}\right)^{2-2g}\alpha_c\left(\sum_{g^\bullet_1\in (g_1),...,g^\bullet_n\in (g_n)}\prod_{j=1}^n \chi_\alpha(g^\bullet_j)\right). 
\end{split}
\]
Recall that, for $\alpha\in\widehat{G}$, $\alpha_c\in \mathbb{C}$ is defined as follows: as $c\in Z(G)$, Schur's lemma implies that $\alpha(c)\in GL(V_\alpha)$ is a scalar multiple $\alpha_c Id_{V_\alpha}$. As a corollary, we have reached the following theorem. 

\begin{theorem}\label{thm:char-formula}
The local degree of the map 
\[
p_{\LL}: \mathcal{K}_{g,n}(\Y,\beta, \LL)\longrightarrow \mathcal{K}_{g,n}(\B,\beta, \pi(\LL)). 
\]
is equal to 
\begin{equation}\label{eq:degree}
d_{\vec{\mathfrak{l}}, \mathfrak{c}}=\sum_{(g_1)\in \mathsf{l}_1, \cdots, (g_n)\in \mathsf{l}_n}\sum_{\alpha \in \widehat{G}}\frac{1}{\dim(V_\alpha)^n}\left(\frac{\text{dim}\, V_\alpha}{|G|}\right)^{2-2g}\alpha_\mathfrak{c}\left(\sum_{g^\bullet_1\in (g_1),...,g^\bullet_n\in (g_n)}\prod_{j=1}^n \chi_\alpha(g^\bullet_j)\right),
\end{equation}
where $\mathfrak{c}$ is  $\prod_{i=1}^n hol(\Y_\Sigma, \tau)(L_i, z_i)$. 
\end{theorem}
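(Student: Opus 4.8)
The plan is to obtain Theorem \ref{thm:char-formula} by assembling Proposition \ref{prop:local-holonomy} with the counting identity recorded just above it for $\Omega^G_{g,c}$. First I would invoke Proposition \ref{prop:local-holonomy}, which already identifies the degree $d_{\vec{\mathfrak{l}}}$ with the cardinality of
\[
\bigcup_{(g_1)\in \mathsf{l}_1,\cdots,(g_n)\in \mathsf{l}_n} \chi^G_{g,c}\big((g_1),\cdots,(g_n)\big)\big/ G,
\]
where $c=\prod_{i=1}^n hol(\Y_\Sigma,\tau)(L_i,z_i)$ agrees with the holonomy product appearing in that proposition. The whole task then reduces to evaluating this cardinality and matching it term by term with the right-hand side of Eq.~(\ref{eq:degree}).

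The first point to settle is that the union is disjoint, so that the cardinality of the union is the sum of the cardinalities of its pieces. This holds because the defining conditions of $\chi^G_{g,c}$ force each $\sigma_i$ to lie in the conjugacy class $(g_i)$, and the conjugacy class of $\sigma_i$ is invariant under the diagonal conjugation action of $G$; hence every orbit in the quotient determines the tuple $\big((g_1),\cdots,(g_n)\big)$ unambiguously, and distinct tuples index disjoint subsets of the quotient. This yields
\[
d_{\vec{\mathfrak{l}}}=\sum_{(g_1)\in \mathsf{l}_1,\cdots,(g_n)\in \mathsf{l}_n} \big|\chi^G_{g,c}\big((g_1),\cdots,(g_n)\big)\big/G\big|.
\]

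Next I would make precise that the cardinality of each quotient is the orbifold (groupoid) cardinality, in which every $G$-orbit is weighted by the reciprocal of its stabilizer order; this weighting is forced by $d_{\vec{\mathfrak{l}}}$ being the degree of a map of Deligne-Mumford stacks, whose fibers are counted with automorphisms. By the orbit-stabilizer relation this weighted count equals $\tfrac{1}{|G|}\big|\chi^G_{g,c}\big((g_1),\cdots,(g_n)\big)\big|=\Omega^G_{g,c}\big((g_1),\cdots,(g_n)\big)$. Substituting the character-theoretic evaluation of $\Omega^G_{g,c}$ provided by Eq.~(\ref{eqn:formula_omega_c}) then produces exactly the inner sum over $\alpha\in\widehat{G}$ in Eq.~(\ref{eq:degree}), completing the proof.

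The genuine mathematical content, and the step I expect to be the main obstacle, is not in this assembly but in the character formula for $\Omega^G_{g,c}$ itself, a twisted Frobenius--Mednykh count of $(2g+n)$-tuples on a genus-$g$ surface group with prescribed conjugacy-class data and central twist $c$. Its derivation (deferred to Appendix \ref{app:counting_result}) rests on the orthogonality relations for characters together with Schur's lemma: each class sum $\sum_{g^\bullet\in(g_i)}\chi_\alpha(g^\bullet)$ enters through the scalar by which the corresponding class operator acts on $V_\alpha$, the central element $c$ contributes the scalar $\alpha_c$, and the Euler-characteristic exponent $2-2g$ arises from resolving the commutator relation via the Fourier expansion over $\widehat{G}$. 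Within the present proof the only delicate bookkeeping is the groupoid-cardinality reading of the previous paragraph, which reconciles the set-quotient notation of Proposition \ref{prop:local-holonomy} with the normalized count $\Omega^G_{g,c}=\tfrac{1}{|G|}\big|\chi^G_{g,c}\big|$.
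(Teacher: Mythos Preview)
Your proposal is correct and follows essentially the same route as the paper: invoke Proposition~\ref{prop:local-holonomy} to express the degree as the (disjoint) union of the sets $\chi^G_{g,c}\big((g_1),\ldots,(g_n)\big)/G$, identify the groupoid cardinality of each quotient with $\Omega^G_{g,c}=\tfrac{1}{|G|}|\chi^G_{g,c}|$, and then substitute the character formula (\ref{eqn:formula_omega_c}) from Appendix~\ref{app:counting_result}. Your explicit justifications of the disjointness of the union and of the groupoid-cardinality reading are points the paper passes over silently, so your write-up is if anything slightly more careful than the original.
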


\begin{remark} We observe that as $\tau$ is a local constant function, $\mathfrak{c}$ is  a $Z(G)$ valued local constant function on $\mathcal{M}_{g,n}(\B, \beta)$ and $\mathcal{K}_{g,n}(\B, \beta)$. 
\end{remark}

\subsection{Proof of Lemma \ref{lem:pushforward}}\label{subsec:vir_push} 
By Proposition \ref{prop:vir_push}, $p$ satisfies the virtual pushforward property in the sense of \cite[Definition 3.1]{m2}. Namely, if we write the unweighted virtual class $$[\Mbar_{g,n}(\B, \beta; \pi(\mathfrak{l}_1),...,\pi(\mathfrak{l}_n))]^{uw}$$ as a sum of irreducible cycles: $$[\Mbar_{g,n}(\B, \beta; \pi(\mathfrak{l}_1),...,\pi(\mathfrak{l}_n))]^{uw}=[M(\B)_1]+...+[M(\B)_s],$$
then there are rational numbers $m_1,...,m_s$ such that the pushforward of the unweighted virtual class $[\Mbar_{g,n}(\Y, \beta; \mathfrak{l}_1,...,\mathfrak{l}_n)]^{uw}$ satisfies
$$p_*[\Mbar_{g,n}(\Y, \beta; \mathfrak{l}_1,...,\mathfrak{l}_n)]^{uw}=m_1[M(\B)_1]+...+m_s[M(\B)_s].$$
The weighted virtual class satisfies
\begin{equation*}
\begin{split}
[\Mbar_{g,n}(\B, \beta; \pi(\mathfrak{l}_1),...,\pi(\mathfrak{l}_n))]^{vir}=&[\Mbar_{g,n}(\B, \beta; \pi(\mathfrak{l}_1),...,\pi(\mathfrak{l}_n))]^{uw}\cdot w(\pi(\LL))\\
=&\sum_{i=1}^s  w(\pi(\LL))[M(\B)_i].
\end{split}
\end{equation*}
Hence 
\begin{equation*}
\begin{split}
p_*[\Mbar_{g,n}(\Y, \beta; \mathfrak{l}_1,...,\mathfrak{l}_n)]^{vir}=&p_*[\Mbar_{g,n}(\Y, \beta; \mathfrak{l}_1,...,\mathfrak{l}_n)]^{uw}\cdot  w(\LL)\\
=&\sum_{i=1}^s \frac{m_i w(\LL)}{w(\pi(\LL))} \cdot w(\pi(\LL))[M(\B)_i].
\end{split}
\end{equation*}
We need to show that $$d_{\LL, i}=\frac{m_i w(\LL)}{w(\pi(\LL))}$$ for $i=1,...,s$. For each $i=1,...,s$, the number $m_i$ is computed by counting the number of liftings $\widetilde{\C}\to\Y$ of a given $[\C\to \B]\in M(\B)_i$, up to a factor $w(\pi(\LL))/w(\LL)$.  Therefore we only need to show that this counting is constant on each component of $\Mbar_{g,n}(\B, \beta; \pi(\mathfrak{l}_1),...,\pi(\mathfrak{l}_n))$. This counting for nonsingular domains $\C$ is solved in Theorem \ref{thm:char-formula}.  As $c$ takes values in $Z(G)$ and is continuous on $\Mbar_{g,b}(\B, \beta)$, it is constant on each component $\Mbar_{g,n}(\B, \beta, \pi(\mathfrak{l}_1), ..., \pi(\mathfrak{l}_n))_i$, $i=1,...,s$. To show the needed constant property for the counting number on each component $\Mbar_{g,n}(\B, \beta; \pi(\mathfrak{l}_1),...,\pi(\mathfrak{l}_n))_i$, it suffices to show that the solution to the counting problem given in Theorem \ref{thm:char-formula} is also valid for maps $\C\to \B$ with nodal domains $\C$. 

We argue as follows. First consider a map $\C\to \B$ whose domain $\C$ is a connected orbifold Riemann surface of genus $g$ with a single node $p\in \C$ obtained by identifying two points $p_1, p_2\in \C'$ on a connected nonsingular Riemann surface $\C'$ of genus $g-1$. Liftings of $\C\to \B$ are in bijection with liftings of $\C'\to \C\to \B$, where $\C'\to\C$ is the gluing map. Since $\C'$ is nonsingular, Proposition \ref{prop:local-holonomy} applies to show that the number of isomorphism classes of liftings of $\C'\to\C\to \B$ is given by 
\[
\sum_{...}\sum_{(\zeta)\in Conj(G)}|C(\zeta)|\Omega_{g-1}^G\big(..., (\zeta), (\zeta^{-1})\big).
\]
Here $...$ stands for data coming from orbifold structures at the marked points of $\C$, and the first summation $\sum_{...}$ is the corresponding summation in Theorem \ref{thm:char-formula}. $(\zeta)\in Conj(G)$ arises from liftings of orbifold structures at $p_1, p_2$. We sum over all $(\zeta)\in Conj(G)$ because we do not specify the orbifold structures at the point over $p_1$, and only require that it must map to the orbifold structure at $p_1$. The weight $|C(\zeta)|$ accounts for the possibilities of identifying the two points lying over $p_1$ and $p_2$. By the proof of \cite[Lemma 3.5]{jk}, we  have 
\[
\sum_{(\zeta)\in Conj(G)}|C(\zeta)|\Omega_{g-1}^G\big(..., (\zeta), (\zeta^{-1})\big)=\Omega_g^G(...).
\]
So the number of isomorphism classes of liftings of $\C'\to\C\to \B$ is given by $\sum_{...}\Omega_g^G(...)$, which agrees with the answer for nonsingular domains obtained in Theorem \ref{thm:char-formula}.

A similar argument proves that Theorem \ref{thm:char-formula} is also valid for maps $\C\to \B$ with domain a connected orbifold Riemann surface $\C$ with a single node $p\in \C$ obtained by gluing two nonsingular orbifold Riemann surfaces $\C_1$ and $\C_2$ along $p_1\in \C_1, p_2\in\C_2$. 

By the above two cases, and induction on the number of nodes, Theorem \ref{thm:char-formula} is proven to be valid for maps $\C\to \B$ with $\C$ any nodal Riemann surface.

\appendix

\section{Virtual pushforward property}\label{app:vir_push}
Let $p:\X_1\to \X_2$  be an \'etale morphism of smooth projective Deligne-Mumford stacks. Consider the induced map 
$$p_{g,n,\beta}: \Mbar_{g,n}(\X_1, \beta)\to \Mbar_{g,n}(\X_2, p_* \beta),$$
obtained by composing a stable map to $\X_1$ with $p$. The purpose of this Appendix is to prove the following

\begin{proposition}\label{prop:vir_push}
The map $p_{g,n,\beta}$ satisfies the virtual pushforward property in the sense of \cite[Definition 3.1]{m2}. i.e. if the {\em unweighted} virtual fundamental class $[\Mbar_{g,n}(\X_2, p_*\beta)]^{uw}$ is decomposed into a sum of irreducible cycles as $$[\Mbar_{g,n}(\X_2, p_*\beta)]^{uw}=[M(\X_2)_1]+...+[M(\X_2)_s],$$ 
then there exist $m_1,..., m_s\in \mathbb{Q}$ such that $$(p_{g,n,\beta})_*[\Mbar_{g,n}(\X_1, \beta)]^{uw}=\sum_{i=1}^sm_i[M(\X_2)_i].$$

\end{proposition}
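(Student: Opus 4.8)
The plan is to realize $p_{g,n,\beta}$ as a morphism compatible with the natural perfect obstruction theories and then invoke the virtual pushforward criterion of \cite[Definition 3.1]{m2}. Recall that both $\Mbar_{g,n}(\X_1,\beta)$ and $\Mbar_{g,n}(\X_2,p_*\beta)$ carry perfect obstruction theories relative to the Artin stack $\mathfrak{M}$ of $n$-pointed genus $g$ twisted prestable curves: writing $\pi_i\colon \C_i\to \Mbar_{g,n}(\X_i,\cdot)$ for the universal twisted curve with universal map $f_i$, the relative obstruction theory is $E_i=(R\pi_{i*}f_i^*T_{\X_i})^\vee$. Since $p$ has relative dimension zero it leaves the coarse domain unchanged, so $p_{g,n,\beta}$ sends $[C\to\X_1]$ to the stabilization of $[C\to\X_1\to\X_2]$ and fits into a commutative triangle over (a morphism of) the Artin stacks of twisted curves, covered by a comparison morphism of universal curves.

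The key input is that $p$ is étale, so that $L_{\X_1/\X_2}\simeq 0$ and hence $T_{\X_1}\simeq p^*T_{\X_2}$. First I would pull this identification back along $f_1$ and push it forward along $\pi_1$ to produce a morphism $p_{g,n,\beta}^*E_2\to E_1$ and check that it is a quasi-isomorphism, i.e. that the two relative obstruction theories are canonically identified. Concretely, I would examine the distinguished triangle relating $E_1$, $p_{g,n,\beta}^*E_2$, and the (relative) cotangent complex of $p_{g,n,\beta}$, and verify that the last term is acyclic. In the language of \cite{m2} this exhibits a compatible triple of obstruction theories whose relative term is quasi-isomorphic to the zero complex; equivalently, $p_{g,n,\beta}$ acquires a relative perfect obstruction theory of virtual rank $0$ compatible with $E_1$ and $E_2$. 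I would also record that $p_{g,n,\beta}$ is proper (both moduli stacks are proper since $\X_1,\X_2$ are) and that the two virtual dimensions agree, so the pushforward preserves the relevant cycle dimension.

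With this compatibility established I would apply Manolache's virtual pushforward theorem: a proper morphism equipped with a compatible relative obstruction theory of virtual rank $0$ satisfies the property of \cite[Definition 3.1]{m2}, so $(p_{g,n,\beta})_*[\Mbar_{g,n}(\X_1,\beta)]^{uw}$ lies in the $\mathbb{Q}$-span of the irreducible cycles $[M(\X_2)_i]$ occurring in $[\Mbar_{g,n}(\X_2,p_*\beta)]^{uw}$. This produces the desired rational numbers $m_1,\dots,m_s$, which are the generic fiber degrees of $p_{g,n,\beta}$ over the respective components; in the gerbe application these are the lifting counts evaluated in Section \ref{sec:degree}.

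The main obstacle I anticipate is the bookkeeping around twisted curves: one must confirm that composing with $p$ and restabilizing is genuinely a morphism over the Artin stacks of twisted curves, and that the universal-curve comparison behaves well at markings and nodes, where the automorphism groups of the domain differ between the $\X_1$- and $\X_2$-theories. Because $p$ is étale these stacky discrepancies contribute nothing to the tangent complexes, so they do not affect the obstruction-theoretic identification above; their only numerical trace is the ratio of orbifold orders, which in the application is exactly the factor $w(\LL)/w(\pi(\LL))$ isolated separately in Section \ref{subsec:vir_push} and does not enter the unweighted statement proved here. Verifying that the hypotheses of \cite[Definition 3.1]{m2} are met after this reduction is the only delicate point; the remainder of the argument is formal.
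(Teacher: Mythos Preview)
Your approach is the same as the paper's: use $T_{\X_1}\simeq p^*T_{\X_2}$ from \'etaleness of $p$ to compare the two relative obstruction theories and then invoke \cite[Proposition 3.14]{m2}. The outline is correct.

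However, the part you label ``bookkeeping'' is in fact the entire substance of the argument, and your sketch of it is not quite right. The two moduli spaces do \emph{not} sit over the same copy of $\mathfrak{M}^{tw}_{g,n}$ in a compatible way, because the domain of a stable map to $\X_1$ is generally not the domain of its stabilized image in $\Mbar_{g,n}(\X_2,p_*\beta)$; orbifold orders at markings and nodes can change. The paper handles this by introducing an auxiliary Artin stack $\mathfrak{M}$ parametrizing morphisms $\C_1\to\C_2$ of twisted curves with the expected incidence data, and then proves the forgetful map $g_1:\mathfrak{M}\to\mathfrak{M}^{tw}_{g,n}$, $(\C_1\to\C_2)\mapsto \C_1$, is \'etale (via the infinitesimal lifting criterion and Olsson's log twisted curves). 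Only then can the obstruction theory of $\Mbar_{g,n}(\X_1,\beta)$ relative to $\mathfrak{M}$ be identified with the usual one relative to $\mathfrak{M}^{tw}_{g,n}$, putting the comparison in the framework of \cite{m2}.

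Second, your proposed construction of the map $p_{g,n,\beta}^*E_2\to E_1$ (``pull back along $f_1$, push forward along $\pi_1$'') does not account for the stabilization morphism $\rho:\widetilde{\C}\to p_{g,n,\beta}^*\C$ between universal curves. The chain of isomorphisms the paper uses passes through $R\rho_*L\rho^*\simeq \mathrm{Id}$, which comes from $\rho_*\sO_{\widetilde{\C}}=\sO_{\C}$ (a property of the contraction, proved in \cite[Lemma~9.3.1]{av}), not from \'etaleness of $p$. Your claim that ``because $p$ is \'etale these stacky discrepancies contribute nothing to the tangent complexes'' is therefore not the right reason; the vanishing you need is about $\rho$, not about $p$. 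Once these two ingredients are supplied, the cone of the comparison map is perfect (as in \cite[Proposition~4.9]{m2}) and the rest is formal.
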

The idea of proof is to apply \cite[Proposition 3.14]{m2}. Consider the following diagram:
\begin{equation}\label{diag:vir_class}
\xymatrix{
\Mbar_{g,n}(\X_1, \beta)\ar[ddr]_{f_1}\ar[rrd]^{p_{g,n,\beta}}\ar[dr]_{\mathfrak{q}}& & \\
 & \Mbar \ar[r]\ar[d]& \Mbar_{g,n}(\X_2,p_*\beta)\ar[d]^{f_2}\\
 & \mathfrak{M}\ar[r]^{g_2}\ar[d]_{g_1}& \mathfrak{M}_{g,n}^{tw}\\
 & \mathfrak{M}_{g,n}^{tw} &.
}
\end{equation}
We explain below the ingredients of the Diagram (\ref{diag:vir_class}). 
\begin{enumerate}

\item
$\mathfrak{M}^{tw}$ is the moduli stack of $n$-pointed genus $g$ prestable twisted curves.
\item
$f_2:\Mbar_{g,n}(\X_2, p_*\beta)\to \mathfrak{M}^{tw}$ takes a stable map $\C_2\to \X_2$ to its domain $\C_2$.

\item
$\mathfrak{M}$ is defined to be the moduli stack of  morphisms $\C_1\to \C_2$ between twisted curves such that: 
\begin{enumerate}
\item
For $i=1,2$, marked points on $\C_i$ are labelled by connected components of $I\X_i$. 
\item
The map $\C_1\to \C_2$ takes nodes to nodes, marked points to marked points, and smooth points to smooth points. 
\item
The map $\C_1\to \C_2$ is an isomorphism over the generic locus 
$$\C_1^{gen}:=\C_1\setminus (\{\text{marked points}\}\cup \{\text{nodes} \}).$$ 
\item
For an orbifold marked point $p_1\in \C_1$ which maps to $p_2\in \C_2$, the component of $I\X_1$ labeling $p_1$ must map to the component of $I\X_2$ labeling $p_2$ under the natural map $Ip: I\X_1\to I\X_2$.

\end{enumerate}
 
\item
For $i=1,2$, $g_i: \mathfrak{M}\to \mathfrak{M}^{tw}_{g,n}$ takes $\C_1\to \C_2$ to $\C_i$.

\item
$g_1\circ f_1:\Mbar_{g,n}(\X_1, \beta)\to \mathfrak{M}^{tw}$ takes a stable map $\C_1\to \X_1$ to its domain $\C_1$.

\item
The square in (\ref{diag:vir_class}) is Cartesian, in particular commutative. So the space $\Mbar$ can be identified as $\Mbar:=\mathfrak{M}\times_{\mathfrak{M}^{tw}_{g,n}} \Mbar_{g,n}(\X_2,p_*\beta)$.

\end{enumerate}
It is not hard to check that the stack $\mathfrak{M}$ is algebraic and locally of finite type. 
\begin{lemma}
The map $g_1$ is \'etale.
\end{lemma}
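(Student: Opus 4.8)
The plan is to verify that $g_1$ is \'etale by checking the two standard conditions: that it is locally of finite presentation, and that it is formally \'etale, i.e. that it satisfies the \emph{unique} infinitesimal lifting property. The first is immediate from the fact that $\mathfrak{M}$ is algebraic and locally of finite type and that $\mathfrak{M}^{tw}_{g,n}$ is locally of finite type, so the substance lies entirely in formal \'etaleness. Concretely, I would fix a square-zero extension $T\hookrightarrow T'$ of affine schemes, a twisted curve $\C_1'\to T'$ classified by a map $T'\to \mathfrak{M}^{tw}_{g,n}$, and a lift over $T$ given by a morphism of twisted curves $\phi_T\colon \C_{1,T}\to \C_{2,T}$ of the type parametrized by $\mathfrak{M}$. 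The goal is then to produce a \emph{unique} twisted curve $\C_2'\to T'$ together with a morphism $\phi'\colon \C_1'\to \C_2'$ restricting to $\phi_T$ and satisfying conditions (a)--(d) in the definition of $\mathfrak{M}$.

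Second, I would exploit the defining condition that $\phi$ is an isomorphism over the generic locus $\C_1^{gen}$ to reduce to a purely local problem. Over the open dense substack $\C_{1,T}^{gen}$ the morphism $\phi_T$ is an isomorphism, so setting $\C_2'=\C_1'$ and $\phi'=\mathrm{id}$ there gives the unique extension; hence both existence and uniqueness of the lift are concentrated in \'etale neighbourhoods of the special points---the marked points and the nodes---which form disjoint closed substacks. This localizes the lifting problem to the standard local models of twisted curves: near a marked point $\C_1$ is \'etale-locally $[\A^1/\mu_a]$ and near a node it is the balanced model $[\{xy=0\}/\mu_a]$, with the corresponding local models of $\C_2$ obtained as cyclic quotients of these.

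Third, I would carry out the deformation analysis in these local models. The key point is that a twisted curve is \'etale-locally rigid: the isotropy structure at a special point is discrete combinatorial data (the order of the cyclic stabilizer), and condition (d) forces the local twisted structure of $\C_2$ to be the one whose labeling component is the image under $Ip\colon I\X_1\to I\X_2$ of the component labeling the corresponding point of $\C_1$. Since $p$ is \'etale, this image is determined and locally constant, so the target local model of $\C_2'$---and the morphism $\phi'$ to it---is uniquely pinned down by $\C_1'$ together with this discrete labeling data, leaving no room for a nontrivial deformation and presenting no obstruction. Gluing the unique local extensions with the unique extension over $\C_1^{gen}$ yields the desired unique pair $(\C_2',\phi')$, establishing that $g_1$ is formally \'etale. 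Together with local finite presentation this shows $g_1$ is \'etale; as a sanity check, the fibres of $g_1$ over a geometric point are the finite sets of admissible labelings, confirming that $g_1$ is unramified.

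I expect the main obstacle to be the local analysis at the \emph{nodes}: there one must check that the balanced condition is preserved, that the cyclic quotient producing the local model of $\C_2$ deforms uniquely over $T'$ together with the morphism $\phi'$, and that the compatibility of labelings across the two branches of the node---again controlled by $Ip$---leaves no ambiguity. The marked-point case is then a mild simplification of the same computation.
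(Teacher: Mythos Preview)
Your overall strategy---proving formal \'etaleness via the infinitesimal lifting criterion---coincides with the paper's, but the execution diverges. The paper does \emph{not} localize to marked points and nodes and analyze local models directly. Instead, it first passes to coarse curves: by \cite[Lemma~4]{Beh} the coarse curve $C_{2A}$ and the coarse morphism $C_{1A}\to C_{2A}$ extend uniquely over the thickening, and then the twisted (stacky) structure is restored using Olsson's equivalence between twisted curves and log twisted curves \cite{Ol}, with the detailed argument deferred to \cite[Proposition~3.4]{ajt1}. This reduces the whole proof to citations.

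Your route is in principle workable, but two points deserve sharpening. First, your invocation of condition~(d) and the \'etaleness of $p$ only controls the stabilizer order of $\C_2$ at \emph{marked points}; at nodes there is no labeling data, and the stabilizer order $b$ at a node of $\C_2$ is simply part of the given central-fibre datum $\C_{2,T}$, not something forced by $Ip$. Second---and this is where the substance lies---at a node with local model $[\{xy=t\}/\mu_a]\to[\{uv=s\}/\mu_b]$ (with $b\mid a$), the claim that $\C_2'$ is ``uniquely pinned down'' amounts to the assertion that the smoothing parameter $s$ is a determined function of $t$ (indeed $s=t^{a/b}$) and that this is compatible with gluing to the generic-locus isomorphism. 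You flag this as the main obstacle but do not carry it out. The paper's approach sidesteps this computation entirely by handling the coarse deformation globally and then layering on the log/twisted structure. Both routes are valid: the paper's buys a two-line proof at the cost of heavier external input, while yours would be more self-contained but requires an honest local calculation at the nodes and a careful gluing argument.
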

\begin{proof}
Since the stacks involved are algebraic and locally of finite type, it suffices to show that $g_1$ is formally \'etale, which we do by using infinitesimal lifting criteria for \'etaleness. 

Let $B$ be an artinian local ring, $I\subset B$ a square-zero ideal, and set $A:=B/I$. Let $\mathfrak{p}_A: \C_{1A}\to \C_{2A}$ be an $A$-valued object of $\mathfrak{M}$. We will show that a unique lift of $\mathfrak{p}_A$ to a $B$-valued object exists provided a lift of $\C_{1A}$ to a twisted curve $C_{1B}$ over $B$ is given. 

By \cite[Lemma 4]{Beh}, the coarse curve $C_{2A}$ of $\C_{2A}$ extends uniquely to a curve $C_{2B}$ over $B$. Moreover, the morphism $\bar{\mathfrak{p}}_A: C_{1A}\to C_{2A}$ also extends to a morphism $\bar{\mathfrak{p}}_B: C_{1B}\to C_{2B}$ over $B$.

To show that twisted curves also extend, one can use the equivalence between twisted curves and log twisted curves \cite{Ol}. The detailed argument for the existence of a lift $\mathfrak{p}_B: \C_{1B}\to \C_{2B}$ of $\mathfrak{p}_A$ is identical to the arguments in the proof of \cite[Proposition 3.4]{ajt1} and is omitted.
\end{proof}

We now discuss obstruction theory. Since $g_1$ is \'etale, the obstruction theory on $\Mbar_{g,n}(\X_1,\beta)$ relative to $g_1\circ f_1$ is the same as the obstruction theory relative to $f_1$. To describe them we consider the following diagram:

\begin{equation}
\xymatrix{
{\widetilde{\C}}\ar[dr]^{\rho} \ar[rdd]_{\widetilde{u}}\ar[rr]^{\widetilde{f}} &  & \X_1\ar[dr]^{p} & \\
 &p^*\C\ar[d]^{u'}\ar[r]^{p'_{g,n,\beta}} & \C\ar[d]^{u}\ar[r]^{f} & \X_2\\ 
 &\Mbar_{g,n}(\X_1, \beta)\ar[r]^{p_{g,n,\beta}} & \Mbar_{g,n}(\X_2, p_*\beta) & 
}
\end{equation}
where
\begin{enumerate}
\item
The map $u'$ is the pullback of $u$ via $p_{g,n,\beta}$, i.e. the square involving them is cartesian. 
\item
The map $(u, f)$ (resp. $(\widetilde{u}, \widetilde{f})$) is the universal stable map over $\Mbar_{g,n}(\X_2, p_*\beta)$ (resp. $\Mbar_{g,n}(\X_, \beta)$). 
\item
Consequently, the map $\rho$ is the universal stabilization map induced by $p: \X_1\to \X_2$. By the proof of \cite[Lemma 9.3.1]{av}, we have $\rho_*\mathcal{O}_{\widetilde{\C}}=\mathcal{O}_\C$. 
\end{enumerate}

The obstruction theory of $\Mbar_{g,n}(\X_1,\beta)$ relative to $f_1$ is given by $R\widetilde{u}_*\widetilde{f}^*T_{\X_1}\to L^\bullet_{\Mbar_{g,n}(\X_1,\beta)/\mathfrak{M}}$. The 
obstruction theory of $\Mbar_{g,n}(\X_2, p_*\beta)$ relative to $f_2$ is given by $Ru_*f^*T_{\X_2}\to L^\bullet_{\Mbar_{g,n}(\X_2, p_*\beta)/\mathfrak{M}^{tw}_{g,n}}$. We calculate
\begin{equation}\label{eqn:map_obs}
\begin{split}
Lp_{g,n,\beta}^*Ru_*f^*T_{\X_2}&\to Ru_*'L{p_{g,n,\beta}'}^*f^*T_{\X_2}\\
&\simeq Ru_*'R\rho_*L\rho^*L{p_{g,n,\beta}'}^*f^*T_{\X_2}\\
&\simeq R\widetilde{u}_*L\rho^*L{p_{g,n,\beta}'}^*f^*T_{\X_2}\\
&\simeq R\widetilde{u}_*\widetilde{f}^*p^*T_{\X_2} \\
&\simeq R\widetilde{u}_*\widetilde{f}^*T_{\X_1}.
\end{split}
\end{equation}
We explain the chain of maps in the above computation. 
\begin{enumerate}
\item
The isomorphism in the second line follows from $R\rho_*L\rho^*\simeq Id$, which follows from $\rho_*\mathcal{O}_{\widetilde{\C}}=\mathcal{O}_\C$. 
\item
The isomorphism in the third line follows from $\widetilde{u}=u'\circ \rho$. 
\item 
The isomorphism in the fourth line follows from $p\circ \widetilde{f}=f\circ p_{g,n,\beta}'\circ \rho$.
\item
The isomorphism in the fifth line follows from $T_{\X_1}=p^*T_{\X_2}$, which is because $p$ is \'etale.
\end{enumerate}
Write $\phi$ for the map obtained in Eq. (\ref{eqn:map_obs}). The following commutative diagram can then be deduced by standard arguments:
\begin{equation}
\xymatrix{
Lp_{g,n,\beta}^*Ru_*f^*T_{\X_2}\ar[r]^{\phi}\ar[d] & R\widetilde{u}_*\widetilde{f}^*T_{\X_1}\ar[d]\\
Lp_{g,n,\beta}^*L^\bullet_{\Mbar_{g,n}(\X_2, p_*\beta)/\mathfrak{M}^{tw}_{g,n}}\ar[r] & L^\bullet_{\Mbar_{g,n}(\X_1,\beta)/\mathfrak{M}}.
}
\end{equation}
Because the relative tangent bundle of $p$ is $0$, the arguments of \cite[Proposition 4.9]{m2} applies to show that  the cone of $\phi$ is a perfect complex.  

Given the above discussions, we conclude the result of Proposition \ref{prop:vir_push} from \cite[Proposition 3.14]{m2}. 
\section{A counting result}\label{app:counting_result}
Let $G$ be a finite group. Let $g, n$ be integers with $g\geq 0$ and $n>0$. Given a collection $(g_1),..., (g_n)$ of conjugacy classes of $G$, we consider the following set
\begin{equation}
\chi_g^G((g_1),...,(g_n))\subset G^{2g+n}
\end{equation}
consisting of tuples $(\alpha_1,...,\alpha_g, \beta_1,...,\beta_g, \sigma_1,...,\sigma_n)$ such that
\begin{enumerate}
\item
\begin{equation}\label{eqn:pi1_relation}
\prod_{i=1}^g [\alpha_i, \beta_i]=\prod_{j=1}^n \sigma_j, 
\end{equation}
\item
$\sigma_j\in (g_j),$  for all $j=1,...,n$,
\end{enumerate} 
Define $\Omega_g^G((g_1),...,(g_n)):=|\chi_g^G((g_1),...,(g_n))|/|G|$. In this appendix we prove the following formula:

\begin{proposition}\label{prop:formula_omega}
Suppose $\chi_g^G((g_1),...,(g_n))$ is non-empty, then
\begin{equation}\label{eqn:formula_omega}
\Omega_g^G((g_1),...,(g_n))=\frac{1}{\prod_{j=1}^n |C_G(g_j)|}\sum_{\alpha \in \widehat{G}}\left(\prod_{j=1}^n \chi_\alpha(g_j)\right) \left(\frac{\text{dim}\, V_\alpha}{|G|}\right)^{2-2g-n}.
\end{equation}
\end{proposition}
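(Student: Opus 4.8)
The plan is to compute the cardinality $|\chi_g^G((g_1),\dots,(g_n))|$ directly by Fourier analysis on the class algebra of $G$, using three standard identities from character theory and then matching powers of $|G|$ and $\dim V_\alpha$ at the end. Write $C_j=(g_j)$ for the conjugacy classes, and regard class functions on $G$ as an algebra under convolution $(f\ast h)(z)=\sum_{xy=z}f(x)h(y)$. The central primitive idempotents $e_\alpha=\tfrac{\dim V_\alpha}{|G|}\chi_\alpha$, $\alpha\in\widehat G$, satisfy $e_\alpha\ast e_\beta=\delta_{\alpha\beta}e_\alpha$, and this orthogonality is the engine that converts every convolution into a clean sum over $\widehat G$.

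First I would record the commutator counting function. Setting $\Phi(z):=\#\{(a,b)\in G^{\times 2}\mid [a,b]=z\}$, the Frobenius formula gives $\Phi=|G|\sum_{\alpha\in\widehat G}\tfrac{1}{\dim V_\alpha}\chi_\alpha=|G|^{2}\sum_{\alpha}\tfrac{1}{(\dim V_\alpha)^{2}}e_\alpha$, which I would prove from the orthogonality of matrix coefficients (Schur), via the auxiliary identity $\sum_{a\in G}\chi_\alpha(axa^{-1}y)=\tfrac{|G|}{\dim V_\alpha}\chi_\alpha(x)\chi_\alpha(y)$. Since the number of ways to write $z$ as a product of $g$ commutators is the $g$-fold convolution $\Phi^{\ast g}$, orthogonality of the $e_\alpha$ immediately yields
\[
\Phi^{\ast g}(z)=|G|^{2g}\sum_{\alpha\in\widehat G}\frac{e_\alpha(z)}{(\dim V_\alpha)^{2g}}=|G|^{2g-1}\sum_{\alpha\in\widehat G}\frac{\chi_\alpha(z)}{(\dim V_\alpha)^{2g-1}}.
\]

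Next I would incorporate the boundary classes. For fixed $\sigma_j\in C_j$, the number of commutator tuples with $\prod_i[\alpha_i,\beta_i]=\prod_j\sigma_j$ is $\Phi^{\ast g}(\sigma_1\cdots\sigma_n)$, so $|\chi_g^G((g_1),\dots,(g_n))|=\sum_{\sigma_j\in C_j}\Phi^{\ast g}(\sigma_1\cdots\sigma_n)$. To evaluate $\sum_{\sigma_j\in C_j}\chi_\alpha(\sigma_1\cdots\sigma_n)$ I would pass to the representation $\alpha$: each class sum $\widehat{\mathbf 1}_{C_j}(\alpha)=\sum_{\sigma\in C_j}\alpha(\sigma)$ is central, hence the scalar $\tfrac{|C_j|\chi_\alpha(g_j)}{\dim V_\alpha}\mathrm{Id}$, and taking the trace of the product over $j$ gives $\sum_{\sigma_j\in C_j}\chi_\alpha(\sigma_1\cdots\sigma_n)=\tfrac{\prod_j|C_j|\,\chi_\alpha(g_j)}{(\dim V_\alpha)^{n-1}}$. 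Combining this with the previous display, dividing by $|G|$ to form $\Omega_g^G$, substituting $|C_j|=|G|/|C_G(g_j)|$, and collecting the powers of $|G|$ and $\dim V_\alpha$, I expect to land exactly on the claimed expression $\tfrac{1}{\prod_j|C_G(g_j)|}\sum_\alpha\bigl(\prod_j\chi_\alpha(g_j)\bigr)\bigl(\tfrac{\dim V_\alpha}{|G|}\bigr)^{2-2g-n}$.

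The content here is classical (it is a form of the Frobenius--Mednykh count of surface-group homomorphisms), so the main obstacle is not conceptual but bookkeeping: keeping the exponents straight across the commutator step (which produces $2g-1$ in the denominator and $2g-1$ powers of $|G|$) and the boundary step (which produces $n-1$ together with the factors $\prod_j|C_j|$), and converting $|C_j|$ to $|C_G(g_j)|$ without a reciprocal error. The non-emptiness hypothesis plays no role in the identity itself, since when $\chi_g^G$ is empty both sides vanish automatically; I would invoke it, if at all, only to exclude a vacuous reading of the statement.
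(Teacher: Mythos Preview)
Your argument is correct and is in fact the classical Frobenius--Mednykh derivation, but it is \emph{not} the route the paper takes. The paper proceeds indirectly: it imports from \cite{jk} the identity $\langle \tau_{a_1}(f_{\alpha_1}),\dots,\tau_{a_n}(f_{\alpha_n})\rangle_g^G=\nu_\alpha^{1-g}\langle\tau_{a_1},\dots,\tau_{a_n}\rangle_g\cdot\delta_{\alpha_1=\cdots=\alpha_n}$ together with $\langle\tau_{a_1}(e_{(g_1)}),\dots\rangle_g=\Omega_g^G((g_1),\dots)\langle\tau_{a_1},\dots\rangle_g$, expands the $f_\alpha$ in the $e_{(g)}$, cancels the descendant integral, and interprets the result as a linear system $\mathfrak{A}\,\Omega=\nu_g$ where $\mathfrak{A}$ is the $C^n\times C^n$ character matrix. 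It then writes down the inverse matrix $\mathfrak{B}$ (entries $\prod_j\chi_{\alpha_j}(g_j)/|C_G(g_j)|$), checks $\mathfrak{B}\mathfrak{A}=\mathrm{Id}$ by orthogonality, and reads off the formula from $\Omega=\mathfrak{B}\,\nu_g$.

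Your approach is more elementary and self-contained: it needs only Schur's lemma and the Frobenius commutator count, and never invokes Gromov--Witten invariants of $BG$ or the results of \cite{jk}. The paper's approach, by contrast, situates the formula inside the GW formalism that the rest of the paper uses, at the cost of a somewhat circuitous linear-algebra inversion and dependence on external input. Your remark that the non-emptiness hypothesis is idle is also correct: your derivation is an unconditional identity, so both sides vanish together when $\chi_g^G$ is empty.
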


After this paper is completed, we learned from \cite{tu} that this result is due to Mednykh \cite{Me}. We have included this appendix here for completeness. 
\begin{remark}[Alternative expression]
Since the size of a conjugacy class $(g)$ satisfies $|(g)|=|G|/|C_G(g)|$, we can rewrite Eq. (\ref{eqn:formula_omega}) as
\begin{equation}\label{eqn:2nd_formula_omega}
\Omega_g^G((g_1),...,(g_n))=\sum_{\alpha \in \widehat{G}}\left(\frac{\text{dim}\, V_\alpha}{|G|}\right)^{2-2g}\left(\sum_{g^\bullet_1\in (g_1),...,g^\bullet_n\in (g_n)}\prod_{j=1}^n \chi_\alpha(g^\bullet_j)\right) .
\end{equation}
\end{remark}

\begin{remark}[Special cases]
\hfill
\begin{enumerate}
\item (Undefined cases).
For $(g_1),...,(g_n)$ such that condition (\ref{eqn:pi1_relation}) does not hold, we define the set $\chi_g^G((g_1),...,(g_n)):=\emptyset$ , and hence  $\Omega_g^G((g_1),...,(g_n))=0$ in this case. 

\item (Unstable ranges). We assume that $n\neq 0$ because we must have insertions for the purpose of comparing Gromov-Witten invariants. For $g\geq 0$ and $n>0$, the inequality $2g-2+n>0$ fails in the following two cases:
\begin{enumerate}
\item ($(g,n)=(0,1)$).
In order for (\ref{eqn:pi1_relation}) to hold, we must have $g_1=1$. Clearly $\Omega_0^G((1))=1/|G|$. The right-hand side of (\ref{eqn:formula_omega}) in this case is 
$$\frac{1}{|C_G(1)|}\sum_{\alpha_\in \widehat{G}}\chi_\alpha(1)\left(\frac{\text{dim}\, V_\alpha}{|G|}\right)^{2-1}=\frac{1}{|G|}\sum_{\alpha\in \widehat{G}}\frac{(\text{dim}\, V_\alpha)^2}{|G|}=\frac{1}{|G|}.$$
So (\ref{eqn:formula_omega}) holds in this case.

\item ($(g,n)=(0,2)$).
In order for (\ref{eqn:pi1_relation}) to hold, we must have $(g_2)=(g_1^{-1})$. Clearly we have $\Omega_0^G((g_1), (g_1^{-1}))=|(g_1)|/|G|$. The right-hand side of (\ref{eqn:formula_omega}) in this case is 
$$\frac{1}{|C_G(g_1)||C_G(g_1^{-1})|}\sum_{\alpha\in \widehat{G}}\chi_\alpha(g_1)\chi_\alpha(g_1^{-1})=\frac{|C_G(g_1)|}{|C_G(g_1)||C_G(g_1^{-1})|}=|(g_1)|/|G|$$
by orthogonality of characters. So (\ref{eqn:formula_omega}) holds in this case.

\end{enumerate}
\end{enumerate}
\end{remark}

\subsection{Proof of Proposition \ref{prop:formula_omega}}
By the above remark, we assume $2g-2+n>0$. We begin with a number of definitions, mostly taken from \cite{jk}.

For a conjugacy class $(g)$, define $e_{(g)}:=\sum_{g^\bullet \in (g)} g^\bullet$.  Clearly $e_{(g)}$ is an element of $Z(C^*(G))$, where $Z(C^*(G))$ is the center of the group algebra $C^*(G)$ of the group $G$. Under the identification $$Z(C^*(G))\simeq H_{CR}^*(BG, \mathbb{C})$$ in \cite[Corollary 3.3]{jk}, $e_{(g)}$ is identified with $1_{(g)}$, the fundamental class of the component $[\text{pt}/C_G(g)]$ of the inertia orbifold of $BG$. 

For $\alpha\in \widehat{G}$, define $\nu_\alpha=(\text{dim}\, V_\alpha/|G|)^2$ and 
\begin{equation*}
f_\alpha:=\frac{\text{dim}\,V_\alpha}{|G|}\sum_{g\in G}\chi_\alpha(g^{-1})g=\frac{\text{dim}\,V_\alpha}{|G|}\sum_{(g) \text{ conjugacy class}}\chi_\alpha(g^{-1})e_{(g)}.
\end{equation*}

For $\alpha_1,...,\alpha_n\in \widehat{G}$, define 
\begin{equation}
\nu_g(\alpha_1,...,\alpha_n):=\left(\frac{\text{dim}\, V_\alpha}{|G|}\right)^{2-2g-n}\prod_{j=1}^n\delta_{\alpha_j, \alpha}=
\begin{cases}
\left(\frac{\text{dim}\, V_\alpha}{|G|}\right)^{2-2g-n} \quad \text{ if } \alpha_1=...=\alpha_n=:\alpha\\
0 \quad \text{ otherwise}.
\end{cases}
\end{equation}

The result in \cite[Proposition 4.2]{jk}, which relates Gromov-Witten invariants of $BG$ and a point, can be stated as follows: for non-negative integers $a_1,...,a_n$
\begin{equation}\label{eqn:GW_formula}
\< \tau_{a_1}(f_{\alpha_1}),...,\tau_{a_n}(f_{\alpha_n})\>_g^G=
\begin{cases}
\nu_\alpha^{1-g}\<\tau_{a_1},...,\tau_{a_n}\>_g \quad \text{ if } \alpha_1=...=\alpha_n=:\alpha\\
0 \quad \text{ otherwise}.
\end{cases}
\end{equation}
By the definition of $f_\alpha$, we have 
\begin{eqnarray*}
&&\< \tau_{a_1}(f_{\alpha_1}),...,\tau_{a_n}(f_{\alpha_n})\>_g^G=\\
&&\qquad \qquad \qquad \left(\prod_{j=1}^n\frac{\text{dim}\, V_{\alpha_j}}{|G|}\right)\sum_{(g_1),...,(g_n) \text{ conjugacy classes}} \prod_{j=1}^n\chi_{\alpha_j}(g_j^{-1}) \< \tau_{a_1}(e_{(g_1)}),...,\tau_{a_n}(e_{(g_n)})\>_g.
\end{eqnarray*}
So we may rewrite Eq. (\ref{eqn:GW_formula}) as
\begin{equation*}
\sum_{(g_1),...,(g_n) \text{ conjugacy classes}} \prod_{j=1}^n\chi_{\alpha_j}(g_j^{-1}) \< \tau_{a_1}(e_{(g_1)}),...,\tau_{a_n}(e_{(g_n)})\>_g=\nu_g(\alpha_1,...,\alpha_n)\<\tau_{a_1},...,\tau_{a_n}\>_g.
\end{equation*}
By \cite[Proposition 3.4]{jk}, we have 
\begin{equation*}
\< \tau_{a_1}(e_{(g_1)}),...,\tau_{a_n}(e_{(g_n)})\>_g=\Omega^G_g((g_1),...,(g_n))\<\tau_{a_1},...,\tau_{a_n}\>_g.
\end{equation*}
Since we can choose $a_1,..., a_n$ such that $\<\tau_{a_1},...,\tau_{a_n}\>_g\neq 0$, we  obtain the following system of equations:
\begin{equation}\label{eqn:eqn_system}
\sum_{(g_1),...,(g_n) \text{ conjugacy classes}} \prod_{j=1}^n\chi_{\alpha_j}(g_j^{-1})\Omega^G_g((g_1),...,(g_n))=\nu_g(\alpha_1,...,\alpha_n), \quad \alpha_1,...,\alpha_n\in \widehat{G}.
\end{equation}
In what follows, we prove Eq. (\ref{eqn:formula_omega}) by solving the system introduced in (\ref{eqn:eqn_system}).

Define a matrix $\mathfrak{A}$ as follows. Columns of $\mathfrak{A}$ are indexed by $n$-tuples $((g_1),...,(g_n))$ of conjugacy classes, and rows of $\mathfrak{A}$ are indexed by $n$-tuples $(\alpha_1,...,\alpha_n)\in \widehat{G}^n$. The entry of $\mathfrak{A}$ on the column $((g_1),...,(g_n))$ and row $(\alpha_1,...,\alpha_n)$ is $\prod_{i,j=1}^n\chi_{\alpha_i}(g_j^{-1})$. $\mathfrak{A}$ is a matrix of size $C^n\times C^n$, where $C$ is the number of conjugacy classes of $G$. 

Let $\Omega$ be the column vector whose entries are indexed by $n$-tuples $((g_1),...,(g_n))$ of conjugacy classes, such that the entry at $((g_1),...,(g_n))$ is $\Omega_g^G((g_1),...,(g_n))$. Let $\nu$ be the column vector whose entries are indexed by  $n$-tuples $(\alpha_1,...,\alpha_n)\in \widehat{G}^n$, such that the entry at $(\alpha_1,...,\alpha_n)$ is $\nu_g(\alpha_1,...,\alpha_n)$. Then Eq. (\ref{eqn:eqn_system}) reads
$$\mathfrak{A}\Omega=\nu_g.$$

Define a matrix $\mathfrak{B}$ as follows. Columns of $\mathfrak{B}$ are indexed by $(\alpha_1,...,\alpha_n)\in \widehat{G}^n$, and rows of $\mathfrak{B}$ are indexed by $n$-tuples $n$-tuples $((g_1),...,(g_n))$ of conjugacy classes. The entry of $\mathfrak{B}$ on the column $(\alpha_1,...,\alpha_n)$ and row $((g_1),...,(g_n))$ is $\prod_{i,j=1}^n(\chi_{\alpha_i}(g_j)/|C_G(g_j)|)$. $\mathfrak{B}$ is also a matrix of size $C^n\times C^n$.

It follows immediately from orthogonality of characters that $\mathfrak{B}\mathfrak{A}=Id$. Hence $\Omega=\mathfrak{B}\nu_g$. Eq. (\ref{eqn:formula_omega}) follows by looking at row entries of this equality.

\subsection{A variant}
Let $c\in Z(G)$ be an element in the center. Given a collection $(g_1),..., (g_n)$ of conjugacy classes of $G$, we consider the following set
\begin{equation}
\chi_{g, c}^G((g_1),...,(g_n))\subset G^{2g+n}
\end{equation}
consisting of tuples $(\alpha_1,...,\alpha_g, \beta_1,...,\beta_g, \sigma_1,...,\sigma_n)$ such that
\begin{enumerate}
\item
\begin{equation}\label{eqn:pi1_relation_c}
\prod_{i=1}^g [\alpha_i, \beta_i]=c\prod_{j=1}^n \sigma_j, 
\end{equation}
\item
$\sigma_j\in (g_j),$  for all $j=1,...,n$,
\end{enumerate} 
Set $\Omega_{g,c}^G((g_1),...,(g_n)):=|\chi_{g,c}^G((g_1),...,(g_n))|/|G|$. The following is a variant of Proposition \ref{prop:formula_omega}.

\begin{proposition}
Suppose $\chi_{g,c}^G((g_1),...,(g_n))$ is non-empty, then
\begin{equation}\label{eqn:formula_omega_c}
\begin{split}
\Omega_{g,c}^G((g_1),...,(g_n))&=\frac{1}{\prod_{j=1}^n |C_G(g_j)|}\sum_{\alpha \in \widehat{G}}\alpha_c\left(\prod_{j=1}^n \chi_\alpha(g_j)\right) \left(\frac{\text{dim}\, V_\alpha}{|G|}\right)^{2-2g-n}\\
&=\sum_{\alpha \in \widehat{G}}\frac{1}{\dim(V_\alpha)^n}\left(\frac{\text{dim}\, V_\alpha}{|G|}\right)^{2-2g}\alpha_c\left(\sum_{g^\bullet_1\in (g_1),...,g^\bullet_n\in (g_n)}\prod_{j=1}^n \chi_\alpha(g^\bullet_j)\right)
\end{split}
\end{equation}
\end{proposition}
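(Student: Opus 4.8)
The plan is to reduce the twisted count to the untwisted count already established in Proposition \ref{prop:formula_omega}, by absorbing the central element $c$ into an auxiliary marked point. Since $c\in Z(G)$, its conjugacy class $(c)$ is the singleton $\{c\}$. Consequently a tuple $(\alpha_1,\beta_1,\ldots,\alpha_g,\beta_g,\sigma_1,\ldots,\sigma_n)$ lies in $\chi_{g,c}^G((g_1),\ldots,(g_n))$ if and only if the augmented tuple $(\alpha_1,\beta_1,\ldots,\alpha_g,\beta_g,\sigma_0,\sigma_1,\ldots,\sigma_n)$ with $\sigma_0:=c$ lies in $\chi_g^G((c),(g_1),\ldots,(g_n))$: the defining relation $\prod_{i}[\alpha_i,\beta_i]=c\prod_{j=1}^n\sigma_j$ of \eqref{eqn:pi1_relation_c} is exactly the relation $\prod_{i}[\alpha_i,\beta_i]=\prod_{j=0}^n\sigma_j$ of \eqref{eqn:pi1_relation} once $\sigma_0$ is forced to equal $c$ by the condition $\sigma_0\in(c)$. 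This is a bijection of sets, so $|\chi_{g,c}^G((g_1),\ldots,(g_n))|=|\chi_g^G((c),(g_1),\ldots,(g_n))|$ and therefore $\Omega_{g,c}^G((g_1),\ldots,(g_n))=\Omega_g^G((c),(g_1),\ldots,(g_n))$.

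With this identification in hand, I would feed the $(n+1)$-tuple of classes $((c),(g_1),\ldots,(g_n))$ into formula \eqref{eqn:formula_omega}. Two simplifications then produce the stated answer. First, because $c$ is central, its centralizer is all of $G$, so the factor $1/|C_G(c)|$ contributes an extra $1/|G|$. Second, Schur's lemma gives $R_\alpha(c)=\alpha_c\,\mathrm{Id}_{V_\alpha}$, whence $\chi_\alpha(c)=\alpha_c\dim V_\alpha$; this is precisely where the scalar $\alpha_c$ enters the formula. Combining $\chi_\alpha(c)=\alpha_c\dim V_\alpha$ with the exponent $2-2g-(n+1)=1-2g-n$ coming from the extra insertion, the product $\dim V_\alpha\cdot(\dim V_\alpha/|G|)^{1-2g-n}$ collapses to $|G|\,(\dim V_\alpha/|G|)^{2-2g-n}$, and this surplus factor of $|G|$ cancels against the $1/|G|$ contributed by the centralizer of $c$. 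What remains is exactly the first line of \eqref{eqn:formula_omega_c}.

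The second equality in \eqref{eqn:formula_omega_c} then follows mechanically, exactly as in the passage from \eqref{eqn:formula_omega} to \eqref{eqn:2nd_formula_omega}: since $\chi_\alpha$ is constant on conjugacy classes, $\sum_{g^\bullet_j\in(g_j)}\chi_\alpha(g^\bullet_j)=|(g_j)|\chi_\alpha(g_j)$, and using $|(g_j)|=|G|/|C_G(g_j)|$ together with a redistribution of the powers of $\dim V_\alpha$ and $|G|$ converts the first line into the second. I expect no serious obstacle: the only genuinely new input beyond Proposition \ref{prop:formula_omega} is the evaluation $\chi_\alpha(c)=\alpha_c\dim V_\alpha$ via Schur's lemma, and everything else is exponent bookkeeping. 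The single point deserving a line of care is that the augmented data stays within the range where \eqref{eqn:formula_omega} is valid, and that non-emptiness of $\chi_{g,c}^G$ matches non-emptiness of $\chi_g^G((c),\ldots)$; both are immediate, since appending the forced insertion $\sigma_0=c$ neither creates nor destroys solutions and, for $n>0$, keeps $n+1\geq 2$, so that the unstable cases already checked for Proposition \ref{prop:formula_omega} suffice. As an alternative, one could instead prove the formula directly by the Frobenius-type character expansion of $\delta_e$, using the identities $\sum_{a,b\in G}R_\alpha([a,b])=(|G|/\dim V_\alpha)^2\,\mathrm{Id}$ and $\sum_{\sigma\in(g_j)}R_\alpha(\sigma)=(|(g_j)|\chi_\alpha(g_j)/\dim V_\alpha)\,\mathrm{Id}$; this reproduces the same answer and again isolates $\alpha_c$ through $R_\alpha(c)=\alpha_c\,\mathrm{Id}$, but the reduction above is shorter and reuses Proposition \ref{prop:formula_omega} verbatim.
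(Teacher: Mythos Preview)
Your argument is correct and follows essentially the same strategy as the paper: reduce to the untwisted formula of Proposition~\ref{prop:formula_omega} via a bijection that exploits $c\in Z(G)$, and then extract the scalar $\alpha_c$ using $\chi_\alpha(c\cdot -)=\alpha_c\,\chi_\alpha(-)$. The only difference is cosmetic: the paper absorbs $c$ into the first existing insertion, observing $\Omega_{g,c}^G((g_1),\ldots,(g_n))=\Omega_g^G((cg_1),(g_2),\ldots,(g_n))$ and then substituting $|C_G(cg_1)|=|C_G(g_1)|$ and $\chi_\alpha(cg_1)=\alpha_c\chi_\alpha(g_1)$, whereas you append $c$ as an extra $(n{+}1)$-st insertion and cancel the resulting $|G|$ against $|C_G(c)|=|G|$; the paper's route avoids the exponent re-juggling but both are equally valid.
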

Here, for $\alpha\in\widehat{G}$, we define $\alpha_c\in \mathbb{C}$ as follows: Since $c\in Z(G)$, Schur's lemma imply that $\alpha(c)\in GL(V_\alpha)$ is a scalar multiple $\alpha_c Id_{V_\alpha}$. 
\begin{proof}[Proof of (\ref{eqn:formula_omega_c})]
Since $c$ is a central element, multiplication by $c$ gives a bijection between the conjugacy classes $(g)$ and $(cg)$. It follows that $\Omega_{g,c}^G((g_1),...,(g_n))=\Omega_g^G((cg_1),(g_2),...,(g_n))$. By (\ref{eqn:formula_omega}), we have 
\begin{equation*}
\Omega_{g,c}^G((g_1),...,(g_n))=\frac{1}{|C_G(cg_1)|\prod_{j=2}^n|C_G(g_j)|}\sum_{\alpha\in \widehat{G}}\chi_\alpha(cg_1)\left(\prod_{j=2}^n\chi_\alpha(g_j)\right)\left(\frac{\text{dim}\, V_\alpha}{|G|}\right)^{2-2g-n}.
\end{equation*}
Since $c\in Z(G)$, we have $|C_G(cg_1)|=|C_G(g)|$ and $\chi_\alpha(cg_1)=\alpha_c\chi_\alpha(g_1)$. The result follows.
\end{proof}

\section{Twisted GW invariants of $BG$}\label{app:twisted_bgamma}
Let $G$ be a finite group, and let $c$ be a flat $U(1)$-gerbe over $BG=[\text{pt}/G]$. The construction of \cite{pry} applies to this setting to define $c$-twisted Gromov-Witten theory of $BG$. The purpose of this appendix is to solve this theory.

The inertia orbifold of $BG$ is decomposed as $$IBG=\coprod_{(g)\subset G}BC_G(g),$$
where the disjoint union is taken over conjugacy classes $(g)$ of $G$, and $C_G(g)\subset G$ is the centralizer subgroup of $g\in G$. The flat $U(1)$-gerbe $c$ defines a line bundle $\sL_c\to IBG$, see \cite{pry} and \cite{ru1} for its construction. Classes in the cohomology $H^\bullet(IBG, \sL_c)$ of $IBG$ with coefficients in $\sL_c$ will be used in $c$-twisted Gromov-Witten theory of $BG$. 

Let $\Mbar_{g,n}(BG)$ be the moduli space of $n$-pointed genus $g$ stable maps to $BG$. The evaluation maps at marked points, $ev_i:\Mbar_{g,n}(BG)\to IBG$, take values in the inertia orbifold $IBG$. For $g_1,...,g_n\in G$, define 
$$\Mbar_{g,n}(BG; (g_1),...,(g_n)):=\cap_{i=1}^n ev_i^{-1}(BC_G(g_i)),$$
which is assumed to be non-empty. There is a natural projection $$\pi:\Mbar_{g,n}(BG; (g_1),...,(g_n))\to \Mbar_{g,n}$$ to the moduli space of $n$-pointed genus $g$ stable curves. Descendant classes on $\Mbar_{g,n}$ are denoted by $\psi_1,...,\psi_n$.

According to \cite[Section 5.2]{pry}, the line bundle $\otimes_{i=1}^n ev_i^*\sL_c$ on $\Mbar_{g,n}(BG; (g_1),...,(g_n))$ admits a trivialization $$\theta_{(g_1),...,(g_n)}:  \otimes_{i=1}^n ev_i^*\sL_c\to \underline{\mathbb{C}}.$$
For classes $\alpha_1,...,\alpha_n\in H^\bullet(IBG, \sL_c)$ and integers $a_1,...,a_n\geq 0$, the authors of \cite{pry} define  the $c$-twisted Gromov-Witten invariants to be $$\<\prod_{i=1}^n\tau_{a_i}(\alpha_i)\>_{g,n}^{BG, c}:=\int_{\Mbar_{g,n}(BG; (g_1),...,(g_n))}(\theta_{(g_1),...,(g_n)})_*(\prod_{i=1}^n ev_i^*\alpha_i)\prod_{i=1}^n(\pi^*\psi_i)^{a_i}.$$

Define $\theta(\alpha_1,...,\alpha_n):=(\theta_{(g_1),...,(g_n)})_*(\prod_{i=1}^n ev_i^*\alpha_i)\in \mathbb{C}$. The projection formula implies that 
\begin{equation}\label{eqn:proj}
\<\prod_{i=1}^n\tau_{a_i}(\alpha_i)\>_{g,n}^{BG, c}=\text{deg}(\pi) \theta(\alpha_1,...,\alpha_n)\int_{\Mbar_{g,n}}\prod_{i=1}^n \psi_i^{a_i}.
\end{equation}
The descendant integrals $\int_{\Mbar_{g,n}}\prod_{i=1}^n \psi_i^{a_i}$ are known by the Witten-Kontsevich theorem. By \cite[Proposition 3.4]{jk}, the degree $\text{deg}(\pi)$ is equal to $\Omega_g^G((g_1),..., (g_n))$, which is given by $$\Omega_g^G((g_1),...,(g_n)):=\frac{1}{|G|}\#\left\{(p_1,...,p_g, q_1,...,q_g, \sigma_1,...,\sigma_n |\prod_{i=1}^g[p_i, q_i]=\prod_{j=1}^n\sigma_j, \sigma_j\in (g_j) \right\}.$$

Genus $0$, $3$-point invariants $\<-,-,-\>_{0,3}^{BG, c}$ are used to define the Chen-Ruan cup product $\star$ on the cohomology$H^\bullet(IBG, \sL_c)$, making it a ring $QH_{orb}^\bullet(BG, c)$. It is known, see e.g. \cite[Example 6.4]{ru1}, \cite{pry}, that there is an isomorphism of rings 
\begin{equation}\label{eqn:ring_isom}
QH_{orb}^\bullet(BG, c)\simeq Z(C^*(G, c)),
\end{equation}
where $Z(C^*(G, c))$ is the center of the $c$-twisted group algebra of $G$ as is explained \cite[Examples 6.4]{ru1}.

It is known \cite[Example 6.4]{ru1} that $H^\bullet(BC_G(g), \sL_c)$ is $1$-dimensional if $(g)$ is a $c$-regular conjugacy class of $G$ (i.e. $c(g', g)=c(g, g')$ for all $g'\in C_G(g)$). If $(g)$ is not $c$-regular, then $H^\bullet(BC_G(g), \sL_c)$ vanishes. For a $c$-regular conjugacy class $(g)$, let $e_{(g)}\in H^\bullet(BC_G(g), \sL_c)$ be a generator. The collection $$\{e_{(g)} | (g) \text{ is } c\text{-regular} \}$$ additively generates $QH_{orb}^\bullet(BG, c)$. The following is \cite[Proposition 6.3]{ta-ts}: 
\begin{proposition}\label{prop:cohft}
The assignment $\Lambda_{g,n}^{BG, c}: H^\bullet(IBG, \sL_c)^{\otimes n}\to \mathbb{C}$ defined by $$\alpha_1\otimes...\otimes \alpha_n\mapsto \Lambda_{g,n}^{BG, c}(\alpha_1,...,\alpha_n):= \Omega_g^G((g_1),...,(g_n))\theta(\alpha_1,...,\alpha_n)$$
satisfies the following properties
\begin{enumerate}
\item (Forgetting tails)
$$\Lambda_{g,n}^{BG, c}(e_{(g_1)},...,e_{(g_n)})=\Lambda_{g,n+1}^{BG, c}(e_{(1)}, e_{(g_1)},..., e_{(g_n)}).$$
\item (Cutting loops)
$$\Lambda_{g,n}^{BG, c}(e_{(g_1)},...,e_{(g_n)})=\sum_{(g_0)} |C_G(g_0)| \Lambda_{g-1,n+2}^{BG, c}(e_{(g_0)}, e_{(g_0^{-1})}, e_{(g_1)},...,e_{(g_n)}).$$
\item (Cutting edge)
For $g=g_1+g_2$ and $\{1,..., n\}=P_1\coprod P_2$, 
$$\Lambda_{g,n}^{BG, c}(e_{(g_1)},...,e_{(g_n)})=\sum_{(g_0)} |C_G(g_0)| \Lambda_{g_1, |P_1|+1}^{BG, c}(\{e_{(g_i)}\}_{i\in P_1}, e_{(g_0)})\Lambda_{g_1, |P_2|+1}^{BG, c}(e_{(g_0^{-1})}, \{e_{(g_i)}\}_{i\in P_2}).$$
\end{enumerate}
Here the sums are taken over $c$-regular conjugacy classes.
\end{proposition}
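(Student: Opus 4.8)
The plan is to exploit the product structure of the definition $\Lambda_{g,n}^{BG,c}(\alpha_1,\dots,\alpha_n)=\Omega_g^G((g_1),\dots,(g_n))\,\theta(\alpha_1,\dots,\alpha_n)$, treating the two factors separately. The combinatorial factor $\Omega_g^G$ is, up to $1/|G|$, the weighted count of homomorphisms from the (punctured) surface group into $G$ with prescribed monodromy conjugacy classes, so it governs the underlying flat $G$-bundles; the factor $\theta$ is the gerbe-holonomy trivialization constant of \cite{pry}, recording the $U(1)$-twist by $c$. Each of the three properties is a sewing axiom of a two-dimensional TQFT, and I would verify it for each factor, observing that $\Lambda$ inherits the property from the product. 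The organizing principle is that $\Lambda^{BG,c}$ should be exactly the partition-function TQFT attached to the Frobenius algebra $QH_{orb}^\bullet(BG,c)\simeq Z(C^*(G,c))$ of (\ref{eqn:ring_isom}), with $|C_G(g_0)|$ playing the role of the inverse Frobenius pairing at the node.

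For the factor $\Omega_g^G$ I would argue directly from the character formula (\ref{eqn:formula_omega}). \emph{Forgetting tails} holds because inserting the class $(1)$ forces the extra monodromy to equal $1$ in the defining relation, giving a bijection $\chi_g^G((1),(g_1),\dots,(g_n))\cong \chi_g^G((g_1),\dots,(g_n))$ and hence equal values. For \emph{cutting loops}, substituting (\ref{eqn:formula_omega}) into the right-hand side and noting $2-2(g-1)-(n+2)=2-2g-n$, the dependence on the node collapses via the orthogonality of the irreducible characters $\chi_\alpha$: one computes $\sum_{(g_0)}\chi_\alpha(g_0)\chi_\alpha(g_0^{-1})/|C_G(g_0)|=1$, which precisely removes the two extra factors of $\dim V_\alpha/|G|$ and realizes the genus drop. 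For \emph{cutting edge} the same orthogonality, in the form $\sum_{(g_0)}\chi_\alpha(g_0)\chi_\beta(g_0^{-1})/|C_G(g_0)|=\delta_{\alpha\beta}$, forces the two irreducibles attached to the two components to coincide and reassembles the single character sum of $\Omega_g^G$; the genus and marking exponents add correctly since $g=g_1+g_2$ and $|P_1|+|P_2|=n$. These are also \cite[Lemma 3.5]{jk} and its separating analogue.

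For the factor $\theta$ I would use the explicit construction in \cite{pry}: $\theta_{(g_1),\dots,(g_n)}$ is assembled from the gerbe holonomies around the marked points, and $\sL_c|_{BC_G(g)}$ is canonically dual to $\sL_c|_{BC_G(g^{-1})}$. \emph{Forgetting tails} holds because $e_{(1)}$ is the unit of $QH_{orb}^\bullet(BG,c)$ and contributes a trivial holonomy factor, leaving $\theta$ unchanged. \emph{Cutting loops} and \emph{cutting edge} hold because the two points created at a node carry inverse monodromies $g_0$ and $g_0^{-1}$, so under the canonical duality their two trivialization contributions multiply to $1$; hence $\theta$ for the degenerate configuration equals $\theta$ for the smooth one and factors multiplicatively across the node, matching the Frobenius pairing.

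The main obstacle will be reconciling the range of summation with the twist. The statement sums only over $c$-regular classes $(g_0)$ — those for which $H^\bullet(BC_G(g_0),\sL_c)\neq 0$ and the generator $e_{(g_0)}$ exists — whereas the purely combinatorial gluing identity for $\Omega_g^G$ sums over \emph{all} conjugacy classes. I would resolve this by showing that a non-$c$-regular $(g_0)$ forces $\sL_c$ to have nontrivial monodromy under $C_G(g_0)$, so that the corresponding trivialization, and hence the $\theta$ contribution of that node, vanishes; equivalently the twisted-character orthogonality underlying (\ref{eqn:ring_isom}) annihilates the non-regular terms. Making this cancellation precise — and checking that the surviving $c$-regular terms recombine with weight exactly $|C_G(g_0)|$ after incorporating the holonomy factors — is the delicate step, and I expect it to follow from the projective (twisted) character orthogonality relations that establish the ring isomorphism $QH_{orb}^\bullet(BG,c)\simeq Z(C^*(G,c))$.
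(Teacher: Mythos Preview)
The paper does not supply a proof of this proposition: it is simply quoted as \cite[Proposition 6.3]{ta-ts}. Your proposal therefore goes beyond what the paper does by sketching a direct argument.

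Your overall strategy --- factoring $\Lambda_{g,n}^{BG,c}=\Omega_g^G\cdot\theta$ and checking the three sewing axioms for each factor --- is sound. On the $\Omega$-side the arguments are exactly \cite[Lemma 3.5]{jk} and its separating analogue, and your orthogonality computations are correct. On the $\theta$-side, the key claim that the holonomy factors at a node with inverse monodromies $(g_0),(g_0^{-1})$ pair to $1$ is precisely the compatibility of the trivialization of \cite{pry} with the gluing isomorphism of the inner local systems; this is what underlies the Frobenius-algebra structure (\ref{eqn:ring_isom}) and is the substance of the argument in \cite{ta-ts}.

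You have also correctly located the one real subtlety. The gluing identity for $\Omega_g^G$ alone sums over \emph{all} conjugacy classes, whereas the proposition sums only over $c$-regular ones. Your proposed resolution is right in spirit but cannot quite be phrased as ``$\theta$ vanishes on non-regular classes,'' since for such $(g_0)$ there is no generator $e_{(g_0)}$ and $\Lambda$ is simply undefined there; so there is nothing to cancel term by term. The cleaner route is the one you gesture at in your last sentence: pass to the idempotent basis $\{f_\rho\}$ of (\ref{eqn:idempotent}) and use the twisted-character orthogonality of \cite[Chapter 7]{kar}, which lives entirely over $\widehat{G}_c$ and never sees the non-regular classes. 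In that basis the sewing identities become (\ref{eqn:idem_prod}) and (\ref{eqn:idem_pairing}), and the mismatch disappears rather than having to be cancelled.
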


Let $\widehat{G}_c$ be the set of isomorphism classes of irreducible $c$-twisted complex representations of $G$. For each $[\rho]\in \widehat{G}_c$,  a representative $V_{\rho}$ of $[\rho]$ is fixed. Similar to the untwisted case, character theory of twisted representations implies that $C^*(G, c)$ is semi-simple. More precisely, for $[\rho]\in \widehat{G}_c$, let
\begin{equation}\label{eqn:idempotent}
f_\rho:=\frac{\text{dim }V_\rho}{|G|}\sum_{g\in G^\circ}c(g, g^{-1})^{-1}\chi_\rho(g^{-1}) g.
\end{equation}
Here $G^\circ\subset G$ is the set of $c$-regular elements of $G$, i.e. those $g\in G$ such that the conjugacy class $(g)$ is $c$-regular. And $\chi_\rho$ is the character of $\rho$. By \cite[Chapter 7, Theorem 3.1]{kar}, 
\begin{equation}\label{eqn:idem_prod}
f_{\rho_1}\star f_{\rho_2}=\begin{cases}
f_\rho \quad \text{ if } \rho_1=\rho_2=:\rho\\
0 \quad \text{ else}.
\end{cases}
\end{equation}
By \cite[Chapter 7]{kar}, the orbifold Poincar\'e pairing of $\{f_\rho\}$ satisfies
\begin{equation}\label{eqn:idem_pairing}
(f_{\rho_1}, f_{\rho_2})=\begin{cases}
\nu_\rho \quad \text{ if } \rho_1=\rho_2=:\rho\\
0 \quad \text{ else}.
\end{cases}
\end{equation}
Here $\nu_\rho:=\left(\frac{\text{dim } V_\rho}{|G|}\right)^2$. 

The following result determines the $c$-twisted Gromov-Witten theory of $BG$. 
\begin{theorem}\label{thm:inv}
\begin{equation}
\<\prod_{i=1}^n\tau_{a_i}(f_{\rho_i}) \>_{g,n}^{BG, c}=\begin{cases}
\nu_\rho^{1-g} \int_{\Mbar_{g,n}}\prod_{i=1}^n \psi_i^{a_i} \quad \text{ if } \rho_1=...=\rho_n=:\rho\\
0 \quad \text{ else}.
\end{cases}
\end{equation}
\end{theorem}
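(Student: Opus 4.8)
The plan is to first strip away all the descendant data, reducing Theorem~\ref{thm:inv} to a purely algebraic identity, and then to evaluate that identity using the semisimple structure of the twisted orbifold cohomology ring. By Eq.~(\ref{eqn:proj}) together with the definition $\Lambda_{g,n}^{BG,c}(\alpha_1,\dots,\alpha_n)=\Omega_g^G((g_1),\dots,(g_n))\,\theta(\alpha_1,\dots,\alpha_n)=\deg(\pi)\,\theta(\alpha_1,\dots,\alpha_n)$, one has
\[
\left\langle\prod_{i=1}^n\tau_{a_i}(\alpha_i)\right\rangle_{g,n}^{BG,c}=\Lambda_{g,n}^{BG,c}(\alpha_1,\dots,\alpha_n)\int_{\Mbar_{g,n}}\prod_{i=1}^n\psi_i^{a_i}.
\]
Since the descendant integral is a common factor independent of the insertions, Theorem~\ref{thm:inv} is equivalent to the assertion that $\Lambda_{g,n}^{BG,c}(f_{\rho_1},\dots,f_{\rho_n})=\nu_\rho^{1-g}$ when $\rho_1=\dots=\rho_n=:\rho$, and $0$ otherwise. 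Thus I only need to compute the degree-zero (topological) part $\Lambda$.

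Next I would recognize $\Lambda^{BG,c}_{g,n}$ as the family of correlators of the two-dimensional topological field theory attached to the Frobenius algebra $A:=QH^\bullet_{orb}(BG,c)\cong Z(C^*(G,c))$, whose product is the Chen-Ruan product $\star$ and whose trace is the orbifold pairing $(\,\cdot\,,\cdot\,)$. Indeed, the three properties in Proposition~\ref{prop:cohft} are precisely the sewing axioms of such a theory: \emph{Forgetting tails} is the unit axiom (the class $e_{(1)}$ is the identity of $A$), while \emph{Cutting edge} and \emph{Cutting loops} are the separating and non-separating gluing axioms. The one point to verify is that the coform appearing in these axioms, namely $\sum_{(g_0)}|C_G(g_0)|\,e_{(g_0)}\otimes e_{(g_0^{-1})}$, is the inverse of the orbifold pairing. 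This holds because on the basis $\{e_{(g)}\}$ the pairing sends sector $(g)$ to sector $(g^{-1})$ with weight $|C_G(g)|^{-1}$ (the $c$-cocycle phases being absorbed into the generators), so its inverse pairs $(g)$ with $(g^{-1})$ with weight $|C_G(g)|$, matching the stated weights. Hence the cutting axioms identify $\Lambda^{BG,c}_{g,n}$ with the genus-$g$, $n$-input TQFT invariant of $A$.

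Now I would invoke semisimplicity. By Eq.~(\ref{eqn:idem_prod}) the elements $\{f_\rho\}_{[\rho]\in\widehat{G}_c}$ are orthogonal idempotents summing to the identity, and by Eq.~(\ref{eqn:idem_pairing}) they are orthogonal for the pairing, with $(f_\rho,f_\rho)=\nu_\rho$. Writing $\epsilon(a):=(a,1)$ for the counit gives $\epsilon(f_\rho)=\sum_\sigma(f_\rho,f_\sigma)=\nu_\rho$. The handle element $\omega:=\sum_{(g_0)}|C_G(g_0)|\,e_{(g_0)}\star e_{(g_0^{-1})}$, obtained by multiplying the coform, is basis-independent, so computing it in the idempotent basis yields $\omega=\sum_\rho\nu_\rho^{-1}f_\rho$; thus the handle operator $H=\omega\star(-)$ acts on $f_\rho$ by the scalar $\nu_\rho^{-1}$. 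The structure theorem for semisimple $2$D TQFTs (equivalently, iterating \emph{Cutting loops} $g$ times and collapsing the resulting genus-zero pieces with \emph{Cutting edge} and the genus-zero product encoded by $\Lambda_{0,3}$) then gives
\[
\Lambda^{BG,c}_{g,n}(f_{\rho_1},\dots,f_{\rho_n})=\epsilon\big(H^g(f_{\rho_1}\star\cdots\star f_{\rho_n})\big).
\]
Since $f_{\rho_1}\star\cdots\star f_{\rho_n}$ equals $f_\rho$ when all $\rho_i$ coincide and vanishes otherwise, applying $H^g$ produces $\nu_\rho^{-g}f_\rho$, and then $\epsilon$ produces $\nu_\rho^{-g}\cdot\nu_\rho=\nu_\rho^{1-g}$, as required.

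The main obstacle is the normalization bookkeeping in the second step: one must check that the orbifold pairing on $BG$ produces exactly the weights $|C_G(g_0)|$ in the coform, so that $\Lambda$ is literally the TQFT of $A$ with no spurious constants entering; once this is pinned down the rest is the routine semisimple evaluation. The whole TQFT induction runs in the stable range $2g-2+n>0$; the two unstable cases $(g,n)=(0,1)$ and $(0,2)$ lie outside it and are verified directly, using the explicit values $\Omega_0^G((1))=1/|G|$ and $\Omega_0^G((g_1),(g_1^{-1}))=|(g_1)|/|G|$ recorded after Proposition~\ref{prop:formula_omega}, which reproduce $\nu_\rho^{1-g}$ after expanding $f_\rho$ in the basis $\{e_{(g)}\}$.
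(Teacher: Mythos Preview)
Your proposal is correct and follows essentially the same approach as the paper. The paper's proof reduces via Eq.~(\ref{eqn:proj}) to computing $\Lambda_{g,n}^{BG,c}(f_{\rho_1},\dots,f_{\rho_n})$ and then defers to the induction on $g$ and $n$ in \cite[Proposition 4.2]{jk}, using the ring isomorphism (\ref{eqn:ring_isom}) and the sewing relations of Proposition~\ref{prop:cohft}; your write-up is precisely the closed-form, semisimple-TQFT packaging of that same induction, with the handle operator playing the role of the inductive step in $g$ and the idempotent multiplication the step in $n$.
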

\begin{proof}
By Eq. (\ref{eqn:proj}), it suffices to solve $\Lambda_{g,n}^{BG, c}(f_{\rho_1},...,f_{\rho_n})$. Its proof is identical to the one of \cite[Proposition 4.2]{jk} by induction on $g$ and $n$, with the help of the isomorphism (\ref{eqn:ring_isom}) and Proposition \ref{prop:cohft}.
\end{proof}

\end{document}